\documentclass[11pt]{article}

\usepackage[T1]{fontenc}
\usepackage{verbatim}
\usepackage{amsmath,amsthm}
\usepackage{xspace}
\usepackage{pifont}
\usepackage{amssymb}
\usepackage{dsfont}
\usepackage{makeidx}
\usepackage{mathrsfs}
\usepackage{xcolor, colortbl}
\usepackage{subcaption}
\usepackage[numbers]{natbib}
\usepackage{enumitem}
\usepackage{afterpage}
\usepackage{mathtools}
\usepackage{graphics}
\usepackage[export]{adjustbox}
\usepackage{tcolorbox}
\usepackage{authblk}
\usepackage[normalem]{ulem}
\usepackage[colorlinks,citecolor=blue,urlcolor=blue]{hyperref}
\mathtoolsset{showonlyrefs}

\def\eps{\varepsilon}

\def\Q{\mathbb{Q}}
\def\P{\mathbb{P}}
\def\E{{\mathbb{E}}}
\newcommand{\R}{{\mathbb R}}
\newcommand{\F}{{\mathbb F}}
\newcommand{\N}{{\mathbb N}}

\DeclareMathOperator*{\argmax}{arg\,max}

\newcommand{\bd}{\begin{displaymath}}
\newcommand{\ed}{\end{displaymath}}
\newcommand{\be}{\begin{equation}}
\newcommand{\ee}{\end{equation}}
\newcommand{\bq}{\begin{eqnarray}}
\newcommand{\eq}{\end{eqnarray}}
\newcommand{\bn}{\begin{eqnarray*}}
\newcommand{\en}{\end{eqnarray*}}

\newtheorem{theorem}{Theorem}[section]
\newtheorem{lemma}[theorem]{Lemma}
\newtheorem{proposition}[theorem]{Proposition}

\newtheorem{convention}[theorem]{Convention}
\newtheorem{corollary}[theorem]{Corollary}

\newtheorem{remark}[theorem]{Remark}

\newtheorem{example}[theorem]{Example}

\newtheorem{definition}[theorem]{Definition}
\newtheorem{assumption}[theorem]{Assumption}
\numberwithin{equation}{section}

\date{\today}
\title{Stochastic Games on Large Sparse Graphs}
\author[]{Eyal Neuman}
\author[]{Sturmius Tuschmann\thanks{ST is supported by the EPSRC Centre for Doctoral Training in Mathematics of Random \mbox{Systems}: Analysis, Modelling and Simulation (EP/S023925/1).}}
\affil[]{Department of Mathematics, Imperial College London}

\begin{document}

\maketitle

\begin{abstract}
We introduce a framework for stochastic games on large sparse graphs, covering continuous-time and discrete-time dynamic games as well as static games. Players are indexed by the vertices of simple, locally finite graphs, allowing both finite and countably infinite populations, with asymptotics described through local weak convergence of marked graphs. The framework allows path-dependent utility functionals that may be heterogeneous across players. Under a contraction condition, we prove existence and uniqueness of Nash equilibria and establish exponential decay of correlations with graph distance. We further show that global equilibria can be approximated by truncated local games, and can even be reconstructed exactly on subgraphs given information on their boundary. Finally, we prove convergence of Nash equilibria along locally weakly convergent graph sequences, including sequences sampled from hyperfinite unimodular random graphs.
\end{abstract}

\begin{description}
\item[Mathematics Subject Classification (2020):] 	91A07, 91A15, 91A43, 93E20 
\item[Keywords:] network games, sparse graphs, random rooted graphs, local weak convergence, Nash equilibrium, unimodularity, hyperfiniteness 
\end{description}

\section{Introduction}\label{sec:introduction}
Dynamic network games are non-cooperative games in which players make strategic decisions within a dynamic system, with their mutual dependencies encoded by a connectivity network modeled as a graph. Each player’s strategy influences not only their own payoff but also the dynamics of the environment. These games are useful for modeling strategic behavior in a range of competitive systems, such as autonomous driving \cite{hang2020human}, real-time bidding \cite{sayedi2018real}, and dynamic production management \cite{leng2005game}. From a mathematical standpoint, dynamic network games are challenging to analyze because interactions are typically heterogeneous, populations can be very large, and the feedback effects between players' actions and the evolving environment are often nonlinear.
 
Lovász and Szegedy's seminal work on the convergence of dense graph sequences paved the way for the development of graphon games, which serve as infinite-population approximations of finite-player network games. They showed that in dense graph sequences (where the individual node degrees tend to infinity as the total number of nodes increases) subgraph densities converge to a natural limit. This limit is represented by a graphon, i.e., a symmetric measurable function $W:[0,1]^2 \to [0,1]$ that encodes these densities (see \citep{borgs2008convergent,borgs2012convergent,lovasz2012large}).

Building on this graphon formalism, \citet{parise2023graphon} introduced static graphon games as an infinite-population approximation for large finite static network games. In these games, a continuum of players interacts according to a graphon $W$, where $W(u,v)$ specifies the strength of interaction between players $u$ and $v$. Crucially, the Nash equilibria of these graphon games provide approximate Nash equilibria for large finite network games \cite{carmona2022stochastic, parise2021analysis}.
Dynamic network games are analytically more involved than static ones, since each player’s payoff depends not only on the others' actions but also on an evolving system state whose dynamics are jointly driven by all actions. Dynamic graphon games with Markovian state dynamics have been studied in \cite{aurell2022stochastic, caines2019graphon,caines2021graphon,gao2021linear,lacker2023label,plank2025policy,tangpi2024optimal}, among others, while non-Markovian formulations were studied in \cite{neuman2024stochastic, neuman2025stochastic}.
 
While graphon models provide a powerful limit theory for dense graphs, most large-scale real-world networks are sparse. Games on sparse networks are thus particularly important, as they more accurately reflect the structure of such real-world networks, where players face cognitive, economic, or technological constraints that limit the number of connections they can maintain. In these settings, the individual node degrees typically remain bounded or grow slowly with network size, making dense models less realistic. As a result, sparse models are often better suited for studying strategic behavior, efficiency, and policy design in large networks arising in economic, social, and technological applications. While the limit theory for dense graphs is well established, the sparse case remains comparatively less understood.

One approach to scaling limits of sparse graph sequences is local weak convergence, introduced by \citet{benjamini2001recurrence} and independently by \citet{aldous2004objective}. For sequences of locally finite graphs, local weak convergence captures the distributional convergence of rooted neighborhoods around a uniformly chosen node. That is, it amounts to the weak convergence of the associated laws on the space of locally finite, connected, countable, rooted graphs equipped with the local topology. A central concept in this theory is that of unimodular probability measures, which play a role analogous to graphons as limit objects for dense graph sequences \cite{Aldous2007, benjamini2015unimodular, bordenave2016lecture}. 

Local weak convergence has recently found applications as a limit theory for locally interacting dynamic systems. In particular, \citep{ganguly2022interacting,ganguly2024hydrodynamic,guo2025particle,LackerRamananWu2021, LackerRamananWu2023b,lacker2023local,lacker2025marginal,oliveira2019interacting,OliveiraReisStolerman2020}, among others, have studied large-scale interacting diffusions and Markov chains on sparse graphs. However, extending this paradigm to stochastic games poses significant challenges, as it integrates dynamic interactions on sparse networks with game-theoretic effects. Some special cases have been studied by \citet{lacker2022case}, who considered linear-quadratic stochastic differential games on the specific class of transitive graphs and established convergence of Markovian Nash equilibria under weak convergence of the associated graph spectral measures. However, as pointed out in \cite{lacker2022case,ramananICM}, a general framework for stochastic games on locally finite graphs, including an intrinsic infinite-player model that allows the approximation of large finite-player models, has been missing. Addressing this challenge is the main focus of this work.

Concretely, we introduce a general formulation for stochastic games on large sparse graphs that accommodates continuous-time and discrete-time dynamic games as well as static ones. Players are indexed by the nodes of a simple, locally finite graph, allowing for both finite and countably infinite populations, while asymptotics are encoded through local weak convergence of marked graphs, where each graph node is equipped with an associated mark that carries the player's control. Our framework permits path-dependent goal functionals that are heterogeneous across players, and may include both idiosyncratic and common noise. More precisely, we work with continuously differentiable, strongly concave utility functionals, which provides a general yet tractable setting in which we can characterize Nash equilibria and study their structural properties.

\paragraph{Our main contributions.} In Theorem~\ref{thm:NE}, we show that for any simple, locally finite graph, the game admits a unique Nash equilibrium, obtained as the fixed point of the best-response operator. Having established existence and uniqueness, we then investigate structural properties of this equilibrium and prove its convergence.

\begin{enumerate}
    \item [\textbf{(i)}] \textbf{Correlation decay.} Assuming that the players’ heterogeneity processes (i.e., their individual noise terms) are independent, Proposition~\ref{prop:correlation-decay} proves that the equilibrium strategies exhibit exponential decay of correlations with graph distance. This result is established through the interplay between the Picard approximation of the Nash equilibrium and the rate at which information propagates through the graph.
    
    \item [\textbf{(ii)}]  \textbf{Local approximation.} We formalize the notion of approximating a player’s equilibrium action using only information from a local neighborhood of that player. Proposition~\ref{prop:local-approximation} makes this precise and yields exponential bounds on the convergence rate. Building on this, Proposition~\ref{prop:epsilon-Nash} constructs global $\varepsilon$-Nash equilibria from local equilibrium approximations and derives exponential bounds on the resulting utility gap relative to the global Nash equilibrium.

    \item [\textbf{(iii)}]  \textbf{Local reconstruction.} Proposition~\ref{prop:local-reconstruction} shows that if the Nash equilibrium is known on the vertex boundary of an induced subgraph, then the equilibrium in the interior can be recovered exactly by iterating the best-response operator while keeping the boundary actions fixed.

    \item [\textbf{(iv)}] \textbf{Convergence.} For a sequence of finite random graphs converging locally weakly to a random rooted graph, Theorem~\ref{thm:convergence} establishes convergence of the associated Nash equilibria. This result requires the convergent graph sequence to be specified in advance and is proved by combining analytic fixed-point arguments with tools from local weak convergence theory. We prove this both for convergence in distribution in the local weak sense and for the stronger notion of convergence in probability in the local weak sense, with the latter requiring refined techniques.

    We also introduce a sampling scheme for a hyperfinite unimodular random graph $G$, producing an increasing sequence of finite random subgraphs $(G_n)_{n\ge 1}$ that converges locally weakly to $G$. In Corollary~\ref{cor:convergence} we show that the Nash equilibria of the games on the sampled graphs converge to the Nash equilibrium on $G$.
\end{enumerate}

Collectively, these results establish a unified framework for analyzing strategic interactions on large sparse networks. Beyond a general existence–uniqueness result, we show that equilibrium behavior is intrinsically local: correlations decay exponentially with graph distance, local computations yield global approximate equilibria, and interior equilibrium actions on subgraphs can be exactly recovered from boundary values. Complementing these locality results, we connect finite and infinite populations by proving equilibrium convergence under local weak convergence, including convergence under graph sampling.

\paragraph{Structure of the paper.} The remainder of the paper is structured as follows. Section~\ref{sec:preliminaries} reviews sparse graphs, their local weak convergence, and the corresponding limit objects. Section~\ref{sec:model} introduces our general framework for stochastic games on large sparse graphs, along with several examples. Section~\ref{sec:results} presents our main results, including existence and uniqueness of equilibria, correlation decay of equilibrium actions, local approximation and local reconstruction of equilibria, and convergence of equilibria. Sections~\ref{sec:proofs-existence-uniqueness}--\ref{sec:proofs-convergence} are dedicated to the proofs of our results. Additional results can be found in the appendix.

\section{Preliminaries}\label{sec:preliminaries}
In this section we collect the basic notions needed to describe sparse graphs and their convergence. In particular, we define local weak convergence of (random) graph sequences, as introduced by \citet{benjamini2001recurrence} and independently by \citet{aldous2004objective}. We also consider the concept of unimodular probability measures, which will serve as limit objects in the spirit of graphons for dense graph sequences. Following \cite{Aldous2007,benjamini2015unimodular,bordenave2016lecture,lacker2023local,van2024random}, we work with both unmarked and marked graphs.

\subsection{Graphs}\label{subsec:graphs}
Throughout, we will work with graphs $G=(V,E)$ that always consist of a finite or countably infinite vertex set $V$ and an edge set $E$. All graphs are assumed to be simple (i.e.,~they are undirected and have no loops or multiple edges) and locally finite (that is, the degree $\deg_G(v)$ of each vertex $v\in V$ is finite). For a vertex $v\in V$, let $N_v(G):=\{u\in V: \{u,v\}\in E\}$ denote the set of all neighbors of $v$ in $G$ and write $u\sim v$ when $u\in N_v(G)$. For two vertices $u,v\in V$, denote by $d_G(u,v)$ the graph distance between $u$ and $v$, that is, the length of the shortest path in $G$ from $u$ to $v$. If $H=(V_H,E_H)$ is a graph with $V_H\subset V$ and $E_H\subset E$, we call $H$ a subgraph of $G$ and write $H\subset G$.
We call $H$ an induced subgraph if $E_H=\{\{u,v\}\in E:\ u,v\in V_H\}$, that is, $H$ contains all edges of $G$ between vertices in $V_H$. Given a subgraph $H\subset G$ and $k\in\N_0$, we define its $k$-neighborhood $B_k(H):=\{u\in V(G): \min_{v\in V_H} d_G(u,v)\le k\}$, where $\min\emptyset:=\infty$.
For two subgraphs $H,H'\subset G$, we define their graph distance by $d_G(H,H'):=\min\{d_G(u,v):\ u\in V_H,\ v\in V_{H'}\}$. With a slight abuse of notation, we sometimes write $v\in G$ to mean $v\in V$, and similarly set $|G|:=|V|$ for the cardinality of the vertex set. We denote by $\mathcal{G}$ the set of all finite graphs.

\subsection{Local Convergence of Rooted Graphs}\label{subsec:local-conv-rooted-graphs}
A rooted graph is a triple $G=(V,E,o)$ consisting of a (simple, locally finite) graph $(V,E)$ with a distinguished vertex $o\in V$, called the root. We write $G=(G,o)$ when we wish to emphasize the distinguished root $o$, and we denote by $[[G]]$ the corresponding unrooted graph. Given a rooted graph $G=(V,E,o)$ and $k\in\mathbb N_0$, we write $B_k(G)$ for the induced rooted subgraph consisting of all vertices $v\in V$ whose graph distance $d_G(o,v)$ from the root $o$ is at most $k$. Two rooted graphs $G=(V,E,o)$ and $G'=(V',E',o')$ are said to be isomorphic if there exists a bijection $\varphi:V\to V'$ with $\varphi(o)=o'$ such that
$$
\{u,v\}\in E \;\Longleftrightarrow\; \{\varphi(u),\varphi(v)\}\in E',\quad\text{for all }u,v\in V.
$$
In this case we write $G\cong G'$ and say that $\varphi$ is an isomorphism from $G$ to $G'$. We denote by $I(G,G')$ the set of all isomorphisms from $G$ to $G'$.
\begin{definition}\label{def:local-convergence}
Let $\mathcal G_\ast$ denote the set of isomorphism classes of connected, countable, rooted graphs. A sequence $\{G_n\}_{n\in\mathbb N}\subset\mathcal G_\ast$ is said to converge locally to $G\in\mathcal G_\ast$ if for every $k\in\mathbb N_0$ there exists $n_k\in\mathbb N$ such that $B_k(G_n)\cong B_k(G)$ for all $n\ge n_k$.
\end{definition}
\begin{remark}
The set $\mathcal G_\ast$ can be endowed with a natural metric. For two rooted graphs $G,G'\in \mathcal G_\ast$, define 
\be\label{eq:d_ast-unmarked}
d_\ast(G,G'):=\inf_{k\in\mathbb N_0}\left\{2^{-k}:B_k(G)\cong B_k(G')\right\}=\sum_{k=1}^\infty 2^{-k} \mathds{1}_{\{I(B_k(G),B_k(G'))=\emptyset\}}.
\ee
This turns $(\mathcal G_\ast, d_\ast)$ into a complete and separable metric space (see \cite{aldous2004objective}, Section~2.2). If one works with graphs of uniformly bounded degree, $(\mathcal G_\ast, d_\ast)$ is compact (see \cite{lovasz2012large}, Chapter~18.3.1).
\end{remark}
We now add a further layer of structure by considering marked graphs, in which each vertex carries an associated mark. In the context of a game, vertices represent players, and the marks encode both player-specific noise and individual actions.

Let $(\mathcal X,d)$ be a metric space. An $\mathcal X$-marked rooted graph is a pair $(G,x)=(G,o,x)$, where $G = (V,E,o)\in \mathcal G_\ast$ is a connected, countable, rooted graph and $x = (x_v)_{v\in V} \in \mathcal X^V$ is a family of vertex marks taking values in $\mathcal X$. For $(G,x)$ and $k\in\mathbb N_0$, we denote by $B_k(G,x)$ the induced $\mathcal X$-marked rooted subgraph obtained by restricting $G$ to the ball $B_k(G)$ around the root and equipping each vertex $v\in B_k(G)$ with its mark $x_v$.
Two $\mathcal X$-marked rooted graphs $(G,x)$ and $(G',x')$ are called isomorphic if there exists an isomorphism $\varphi : G \to G'$ such that $x_v = x'_{\varphi(v)}$ for all $v\in V(G)$. In this case we write $(G,x)\cong (G',x')$.

\begin{definition}\label{def:local-convergence-marked}
Let $(\mathcal X,d)$ be a metric space. Denote by $\mathcal G_\ast[\mathcal X]$ the set of isomorphism classes of $\mathcal X$-marked rooted graphs. A sequence $\{(G_n,x^n)\}_{n\in\mathbb N} \subset \mathcal G_\ast[\mathcal X]$ is said to converge locally to $(G,x)\in \mathcal G_\ast[\mathcal X]$ if for every $k\in\mathbb N_0$ and every $\varepsilon>0$ there exists $n_{k,\eps}\in\mathbb N$ such that for all $n\ge n_{k,\eps}$ there is an isomorphism $\varphi : B_k(G_n) \to B_k(G)$ with
$$
\max_{v\in B_k(G_n)} d\big(x^n_v,\, x_{\varphi(v)}\big) < \varepsilon.
$$
\end{definition}
\begin{remark}
As in the unmarked case, the space $\mathcal G_\ast[\mathcal X]$ can be equipped with a metric $d_\ast$ corresponding to this notion of convergence. Following \cite{lacker2023label}, one convenient choice extending \eqref{eq:d_ast-unmarked} is
\begin{equation}\label{eq:d-ast-marked}
d_\ast\big((G,x),(G',x')\big):= \sum_{k=1}^\infty 2^{-k} \Big( 1 \wedge \inf_{\varphi\in I(B_k(G),B_k(G'))}\max_{v\in B_k(G)} d\big(x_v,\, x'_{\varphi(v)}\big) \Big),
\end{equation}
where $\inf\emptyset:=\infty$. If $(\mathcal X,d)$ is complete and separable, then $(\mathcal G_\ast[\mathcal X],d_\ast)$ is also complete and separable (see \cite{bordenave2016lecture}, Lemma~3.4).
\end{remark}
\begin{remark}\label{rem:vertex-edge-marked}
We will also consider graphs that not only have vertex marks in a metric space $\mathcal{X}$, but also edge marks in another metric space $\mathcal{Y}$. Namely, an $(\mathcal X,\mathcal{Y})$-marked rooted graph is a triple $(G,x,y)$, where $G = (V,E,o)\in \mathcal G_\ast$ is a connected, countable, rooted graph, $x = (x_v)_{v\in V} \in \mathcal X^V$ is a family of vertex marks, and $(y_e)_{e\in E}\in \mathcal{Y}^E$ is a family of edge marks. We follow \cite{angel2018hyperbolic,bordenave2016lecture,lee2021conformal} in assigning one $\mathcal Y$-valued mark to each undirected edge, rather than two as, for instance, in \cite{Aldous2007,benjamini2015unimodular,van2024random}. The corresponding notion of isomorphism and the associated local metric are defined analogously to the $\mathcal X$-marked case, and we denote by $\mathcal{G}_\ast [\mathcal{X},\mathcal Y]$ the set of isomorphism classes of $(\mathcal X,\mathcal{Y})$-marked rooted graphs.
\end{remark}


\subsection{Local Weak Convergence}\label{subsec:local-weak-conv}
Up to now, we have worked with connected graphs, but for local weak convergence one often starts from a possibly disconnected graph and then simply views it through the rooted connected component of a chosen vertex. We make this concept precise following \cite{lacker2023label,lovasz2012large,van2024random}.

Let $G=(V,E)$ be a graph and $v\in V$ a vertex. We write $C_v(G)$ for the (isomorphism class of the) connected component of $v$, that is, the rooted graph obtained by taking all vertices $u\in V$ that are connected to $v$ by a path, and viewing $v$ as the root. In particular, $C_v(G)\in \mathcal G_\ast$. For a finite graph $G$, let $U$ be a uniformly random vertex of $G$ and denote by $C_U(G)$ the corresponding $\mathcal G_\ast$-valued random variable.
In the marked setting, for an $\mathcal X$-marked rooted graph $(G,x)$, we analogously set $C_v(G,x) := (C_v(G),\, x_{C_v(G)})$ and $C_U(G,x) :=(C_U(G),\, x_{C_U(G)})$, where $x_{C_v(G)}$ and $x_{C_U(G)}$ denote the corresponding restriction of the marks.

All the graph sequences we considered so far were deterministic. Throughout the rest of this section, we work with random graph sequences defined on a probability space with probability measure $\Q$. Just as for random variables, one typically considers convergence in distribution, convergence in probability, and almost sure convergence. Denote by $C_b(\mathcal G_\ast)$ the set of continuous, bounded functions on $\mathcal G_\ast$. Recall that $\mathcal{G}$ denotes the set of all finite (simple, possibly disconnected) graphs.

\begin{definition}\label{def:local-weak-convergence}
Let $\{G_n\}_{n\in\mathbb N}\subset\mathcal G$ be a sequence of finite random graphs $G_n=(V_n,E_n)$ and let $G$ be a random rooted graph in $\mathcal G_\ast$. Then:
\begin{enumerate}
    \item We say that $\{G_n\}_{n\in\mathbb N}$ converges in distribution in the local weak sense to $G$ if
    \[
    \E_{\Q}\Big[\frac{1}{|V_n|}\sum_{v \in V_n} h\big(C_v(G_n)\big)\Big]\xrightarrow[n\to\infty]{}\E_{\Q}\big[h(G)\big],\quad \text{for all } h \in C_b(\mathcal G_\ast).
    \]
    If $U^n$ is a uniformly random vertex of $G_n$, then, equivalently, $\{C_{U^n}(G_n)\}_{n\in\N}$ converges in distribution to $G$ in $\mathcal{G}_\ast$.
    \item We say that $\{G_n\}_{n\in\mathbb N}$ converges in $\Q$-probability in the local weak sense to $G$ if
    \[
    \frac{1}{|V_n|}\sum_{v \in V_n} h\big(C_v(G_n)\big)\xrightarrow[n\to\infty]{\ \text{prob.}\ }\E_{\Q}\big[h(G)\big],\quad \text{for all } h \in C_b(\mathcal G_\ast).
    \]
    \item We say that $\{G_n\}_{n\in\mathbb N}$ converges $\Q$-almost surely in the local weak sense to $G$ if
    \[
    \frac{1}{|V_n|}\sum_{v \in V_n} h\big(C_v(G_n)\big)\xrightarrow[n\to\infty]{\text{a.s.} }\E_{\Q}\big[h(G)\big],\quad \text{for all } h \in C_b(\mathcal G_\ast).
    \]
\end{enumerate}
\end{definition}

\begin{remark}
The three notions of local weak convergence above form a hierarchy:
$\Q$-almost sure convergence implies convergence in $\Q$-probability, which in turn
implies convergence in distribution. For deterministic graph sequences, however, these distinctions disappear. If the sequence $\{G_n\}_{n\in\N}$ is deterministic, all randomness lies in the limit object $G$, and all three notions of convergence reduce to the single requirement
\[
\frac{1}{|V_n|}\sum_{v \in V_n} h\big(C_v(G_n)\big)\xrightarrow[n\to\infty]{}\E_{\Q}\big[h(G)\big],\quad \text{for all } h \in C_b(\mathcal G_\ast).
\]
\end{remark}

\begin{remark}\label{rem:local-weak-convergence-marked}
The concept of local weak convergence can be extended to the marked setting. Let $(\mathcal X,d)$ be a metric space. For $n\in\N$, let $(G_n,x^n)$ be a finite random graph $G_n = (V_n,E_n)$ equipped with random vertex marks $x^n = (x_v^n)_{v\in V_n}\in \mathcal X^{V_n}$. Let $(G,x)$ be a random rooted graph $G$ together with random marks $x=(x_v)_{v\in G}$, that is, a $\mathcal G_\ast[\mathcal X]$-valued random variable. Then Definition~\ref{def:local-weak-convergence} carries over simply by replacing $C_v(G_n)$ with the marked component $C_v(G_n,x^n)$ and $h\in C_b(\mathcal G_\ast)$ with $h\in C_b(\mathcal G_\ast[\mathcal X])$. For example, convergence in distribution in the local weak sense of the marked graphs $\{(G_n,x^n)\}_{n\in\mathbb N}$ to $(G,x)$ means that
\[
\E_{\Q}\Big[\frac{1}{|V_n|}\sum_{v \in V_n} h\big(C_v(G_n,x^n)\big)\Big]\xrightarrow[n\to\infty]{}\E_{\Q}\big[h(G,x)\big],\quad \text{for all } h \in C_b\big(\mathcal G_\ast[\mathcal X]\big).
\]
\end{remark}
The concepts presented next in Sections~\ref{subsec:limits} and \ref{subsec:hyperfiniteness} are only needed for Corollary~\ref{cor:convergence}, where we construct a locally weakly convergent sequence of finite games by sampling from a fixed random rooted graph. They are not used elsewhere in the paper, and may be skipped without affecting the readability of the other results.

\subsection{Limits of Sparse Graph Sequences}\label{subsec:limits}
The limits of locally weakly convergent graph sequences are random rooted graphs in $\mathcal G_\ast$, or, equivalently, probability measures on $\mathcal G_\ast$. So far, however, we have not put any measure-theoretic structure on $\mathcal G_\ast$ itself. Recall that $d_\ast$ makes $(\mathcal G_\ast,d_\ast)$ a complete and separable metric space, and denote by $\mathcal{B}(\mathcal{G}_\ast)$ the associated Borel $\sigma$-algebra on $\mathcal G_\ast$. An important observation is that not every probability measure on $(\mathcal G_\ast,\mathcal{B}(\mathcal{G}_\ast))$ arises as a local weak limit. More precisely, since the root in Definition~\ref{def:local-weak-convergence} is chosen uniformly at random, any limiting law must satisfy a corresponding invariance property. This requirement is captured by the concept of unimodularity, which we introduce following \cite{Aldous2007,bordenave2016lecture}. 

A doubly-rooted graph $G=(V,E,o_1,o_2)$ consists of a (simple, locally finite) graph $(V,E)$ and two roots $o_1,o_2\in V$. We write $G=(G,o_1,o_2)$ when we wish to emphasize the distinguished roots $o_1$, $o_2$. Two doubly-rooted graphs $G=(V,E,o_1,o_2)$ and $G'=(V',E',o_1',o_2')$ are said to be isomorphic if there exists a bijection $\varphi:V\to V'$ with $\varphi(o_1)=o_1'$ and $\varphi(o_2)=o_2'$ such that
$$
\{u,v\}\in E \;\Longleftrightarrow\; \{\varphi(u),\varphi(v)\}\in E',\quad\text{for all }u,v\in V.
$$
Let $\mathcal G_{\ast\ast}$ denote the set of isomorphism classes of connected, countable, doubly-rooted graphs. As for rooted graphs, $\mathcal G_{\ast\ast}$ can be equipped with a local metric $d_{\ast\ast}$ that gives rise to a Borel $\sigma$-algebra on $\mathcal G_{\ast\ast}$.
\begin{definition}\label{def:MTP}
A probability measure $\sigma$ on $(\mathcal G_\ast,\mathcal{B}(\mathcal{G}_\ast))$ is called unimodular if it obeys the following mass-transport principle: for all Borel-measurable $F: \mathcal G_{\ast\ast} \to [0,\infty]$, 
\be\label{eq:MTP}
\int_{\mathcal G_{\ast}} \sum_{v\in G} F(G,o,v)\sigma(d(G,o))= \int_{\mathcal G_{\ast}} \sum_{v\in G} F(G,v,o)\sigma(d(G,o)).
\ee
If $G\in\mathcal G_\ast$ is a random rooted graph with law $\sigma=\mathcal{L}(G)$ and $\sigma$ is a unimodular distribution, we say that $G$ is a unimodular random graph.
\end{definition}
As observed by \citet{benjamini2001recurrence}, any random rooted graph that is the local weak limit of a finite graph sequence is unimodular, as stated in the following lemma.
\begin{lemma} 
Let $\{G_n\}_{n\in\N}\subset\mathcal G$ be a sequence of finite random graphs, and let $G\in\mathcal{G}_\ast$ be a random rooted graph. If $\{G_n\}_{n\in\mathbb N}$ converges in distribution in the local weak sense to $G$, then $G$ is unimodular.
\end{lemma}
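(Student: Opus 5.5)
The plan is to verify the mass-transport principle \eqref{eq:MTP} first at the finite level and then pass to the limit. For a finite graph $H=(V_H,E_H)$ with uniformly chosen root $U$, and any Borel $F:\mathcal G_{\ast\ast}\to[0,\infty]$, we have by double counting
\[
\E\Big[\sum_{v\in C_U(H)} F(C_U(H),U,v)\Big]=\frac{1}{|V_H|}\sum_{u\in V_H}\sum_{v\sim_{\mathrm{conn}} u}F(H,u,v)=\E\Big[\sum_{v\in C_U(H)} F(C_U(H),v,U)\Big],
\]
since the double sum runs over all ordered pairs $(u,v)$ in a common component and is therefore symmetric in the two roots. Here $F(H,u,v)$ means $F$ evaluated at the doubly-rooted component containing $u,v$. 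Taking $\Q$-expectations over the randomness of $G_n$ then shows that the law $\sigma_n$ of $C_{U^n}(G_n)$ is unimodular for every $n$.

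Next, I will pass to the limit. By Definition~\ref{def:local-weak-convergence}, $\sigma_n\to\sigma=\mathcal L(G)$ weakly on $\mathcal G_\ast$. The standard reduction is that, by a monotone class argument, it suffices to verify \eqref{eq:MTP} for $F$ of the form $F(G,o_1,o_2)=f\big(B_k(G,o_1),o_2\big)\mathds 1_{\{d_G(o_1,o_2)\le k\}}$, with $f$ bounded and depending only on the isomorphism class of the doubly-rooted ball of radius $k$ around $o_1$. One also has to use that $B_k$ around $o_2$ is contained in $B_{2k}$ around $o_1$, so the reversed side is likewise a function of a finite ball. For such $F$, the maps
\[
(G,o)\mapsto \sum_{v}F(G,o,v)\quad\text{and}\quad (G,o)\mapsto\sum_vF(G,v,o)
\]
depend only on $B_{2k}(G,o)$, so they are locally constant and hence continuous on $\mathcal G_\ast$. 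They may, however, be unbounded because degrees are unbounded.

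The main obstacle is this lack of boundedness. I will handle it by truncation. Replace $F$ by $F_M=F\,\mathds 1_{\{|B_{2k}(G,o_1)|\le M,\ |B_{2k}(G,o_2)|\le M\}}$. This truncated $F_M$ is still of the local form, and the truncation is symmetric in the roots, so both sides become bounded continuous functions of $(G,o)$. Weak convergence then transfers the finite-$n$ identity to $\sigma$ for each $M$. Finally, I let $M\to\infty$ using monotone convergence on both sides, since $F_M\uparrow F$ because all balls are finite by local finiteness. This gives \eqref{eq:MTP} for the generating class, and the monotone class theorem extends it to all nonnegative Borel $F$, proving that $G$ is unimodular.
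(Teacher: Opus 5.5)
The paper gives no proof of this lemma; it only attributes it to \citet{benjamini2001recurrence}. Your proposal is correct, and it is the standard argument from the literature. Double counting makes each $\sigma_n=\mathcal L(C_{U^n}(G_n))$ unimodular. Local test functions turn both sides of \eqref{eq:MTP} into locally constant functions on $\mathcal G_\ast$. The symmetric truncation $F_M$ makes them bounded, so weak convergence applies, and monotone convergence then removes the truncation.

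The truncation is the step that does the real work. Under a uniform degree bound, $\sum_v F(G,o,v)$ and $\sum_v F(G,v,o)$ are automatically bounded for bounded local $F$, and one can pass to the limit directly. The lemma as stated allows arbitrary locally finite limits, and your truncation handles exactly that. Your argument in fact shows more generally that any weak limit of unimodular laws is unimodular.

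Two details should be made explicit.

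First, $F_M$ involves $B_{2k}(G,o_2)\subset B_{3k}(G,o_1)$, so it is local of radius $3k$ rather than $2k$. This is harmless.

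Second, in the final extension step, the measures $\mu_L(A):=\int\sum_v\mathds 1_A(G,o,v)\,\sigma(d(G,o))$ and $\mu_R(A):=\int\sum_v\mathds 1_A(G,v,o)\,\sigma(d(G,o))$ on $\mathcal G_{\ast\ast}$ need not be finite. Already $\mu_L(\{d(o_1,o_2)\le 1\})=\E_\sigma[1+\deg(o)]$ can be infinite. Moreover, $\mu_R$ is not a priori finite even on cylinder sets (consider a root with many leaf neighbours). So a monotone class theorem for bounded functions does not apply verbatim, and you should restrict the generating class to nonnegative $F$, such as cylinder indicators. Then proceed as follows:
\begin{itemize}
\item Partition $\mathcal G_{\ast\ast}$ into the countably many cylinder sets $\{d(o_1,o_2)=j,\ (B_j(G,o_1),o_2)\cong(H,h_1,h_2)\}$. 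On each of them $\mu_L\le |H|$.
\item Your limit argument, applied to the indicator of such a set, gives $\mu_R=\mu_L<\infty$ there.
\item The finer cylinder sets contained in it form a $\pi$-system generating the trace $\sigma$-algebra.
\item Dynkin's lemma yields $\mu_L=\mu_R$ on each piece, hence everywhere, which is \eqref{eq:MTP} for all nonnegative Borel $F$.
\end{itemize}
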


\subsection{Hyperfiniteness and Sampling}\label{subsec:hyperfiniteness}
While unimodularity is a necessary condition for a random rooted graph $G\in\mathcal G_\ast$ to arise as the local weak limit of a sequence of finite graphs, it is not sufficient in general \cite{bowen2024aldous,bowen2024aldous2}. A convenient additional assumption ensuring sufficiency and thereby guaranteeing approximability of $G$ by finite graphs is hyperfiniteness \cite{Aldous2007,schramm2011hyperfinite}. We follow the presentation of \cite{angel2018hyperbolic,lee2021conformal}, which aligns best with our framework. Recall the definition of vertex-edge-marked graphs from Remark~\ref{rem:vertex-edge-marked}.

\begin{definition}\label{def:percolation}
Let $G=(V,E,o)\in\mathcal G_\ast$ be a unimodular random graph. A percolation $\xi$ on $G$ is a $\{0,1\}$-valued vertex-edge marking $\xi=(x,y)$ such that the random rooted vertex-edge-marked graph $(G,x,y)$ with $x = (x_v)_{v\in V} \in  \{0,1\}^V$ and $(y_e)_{e\in E}\in \{0,1\}^E$ is unimodular. That is, the law of $(G,x,y)$ satisfies Definition~\ref{def:MTP} with $\mathcal{G}_\ast$, $\mathcal{G}_{\ast\ast}$, and $F: \mathcal G_{\ast\ast} \to [0,\infty]$ replaced by $\mathcal{G}_\ast[\mathcal{X},\mathcal{Y}]$, $\mathcal{G}_{\ast\ast}[\mathcal{X},\mathcal{Y}]$, and $F:\mathcal G_{\ast\ast}[\mathcal{X},\mathcal{Y}] \to [0,\infty]$ for $\mathcal{X}=\mathcal{Y}=\{0,1\}$, respectively. Here, we assume consistency of vertex and edge marks in the sense that an edge can be marked $1$ only if both its endpoints are marked $1$, i.e.,~$y_{\{u,v\}}=1$ implies $x_u=x_v=1$ for every $\{u,v\}\in E$.
\end{definition}
It may be more intuitive to view a percolation $\xi$ as specifying a random subgraph $H_\xi\subset G$ where precisely those vertices and edges with mark~$1$ are kept. We say that $\xi$ is finitary if all connected components of $H_\xi$ are finite $\Q$-almost surely.
\begin{definition}\label{def:hyperfinite}
A unimodular random graph $G\in\mathcal G_\ast$ is called hyperfinite if there exists a sequence $(\xi_n)_{n\in\N}$ of finitary percolations on $G$ such that the associated random subgraphs form an increasing sequence $H_{\xi_1}\subset H_{\xi_2}\subset\ldots$ with $\bigcup_{n\in\N}H_{\xi_n}=G$, $\Q$-almost surely. In this case, $\{H_{\xi_n}\}_{n\in\N}$ is called a finitary exhaustion.
\end{definition}

Percolations and hyperfiniteness can be analogously defined for vertex-marked graphs. The main point for us is that hyperfiniteness yields an approximation by finite unimodular
random graphs $\{H_n\}_{n\in\N}$ (see \cite{lee2021conformal}, Corollary~4.2).

\begin{lemma}\label{lemma:sample-rooted}
Let $G=(G,o)\in\mathcal G_\ast$ be a hyperfinite unimodular random graph with finitary exhaustion $\{H_{\xi_n}\}_{n\in\N}$. Then the sequence of finite unimodular random graphs $\{H_n\}_{n\in\N}\subset\mathcal G_\ast$ given by $H_n:=C_o(H_{\xi_n})$ converges $\Q$-almost surely and hence also in distribution to $G$ in $(\mathcal{G}_\ast,d_\ast)$.
\end{lemma}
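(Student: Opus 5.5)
The plan is to prove almost sure convergence in $(\mathcal G_\ast,d_\ast)$ directly from the exhaustion property, and then obtain convergence in distribution as the usual consequence of almost sure convergence. The unimodularity of each $H_n$ is a separate point, which we treat at the end.

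First, fix an outcome in the full-measure event where $H_{\xi_1}\subset H_{\xi_2}\subset\cdots$, $\bigcup_n H_{\xi_n}=G$, and every $H_{\xi_n}$ has only finite components. We intersect countably many almost sure events, so this event still has full measure. On it, $H_n=C_o(H_{\xi_n})$ is a finite connected rooted graph. This requires $o\in H_{\xi_n}$ for large $n$, which holds because the union contains $o$ and the sequence is increasing; for small $n$ we set $H_n$ to be the trivial one-vertex graph by convention. Now fix $k\in\N_0$. Since $G$ is locally finite, the ball $B_k(G)$ is finite, with finitely many vertices and edges. Each of these vertices and edges lies in some $H_{\xi_m}$. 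By monotonicity, there is a single $n_k$ such that all vertices and edges of $B_k(G)$ belong to $H_{\xi_n}$ for every $n\ge n_k$.

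The key step is to show $B_k(H_n)=B_k(G)$ as rooted graphs for $n\ge n_k$. Since $H_n\subset G$, distances in $H_n$ are at least those in $G$, so $B_k(H_n)\subset B_k(G)$. Conversely, every $v\in B_k(G)$ is joined to $o$ by a geodesic of length at most $k$ lying inside $B_k(G)$. That geodesic therefore lies in $H_{\xi_n}$, so $v$ belongs to the component of $o$ and $d_{H_n}(o,v)\le k$. Edge sets also agree. Every edge of $G$ between two vertices of $B_k(G)$ lies in $B_k(G)$ (an induced subgraph) and hence in $H_{\xi_n}$. Conversely, every edge of $H_n$ is an edge of $G$. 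Hence the identity map is a rooted isomorphism, and $d_\ast(H_n,G)\le 2^{-k}$ for $n\ge n_k$. This gives almost sure convergence. Convergence in distribution then follows, since for $h\in C_b(\mathcal G_\ast)$ dominated convergence yields $\E_\Q[h(H_n)]\to\E_\Q[h(G)]$.

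The main obstacle is the unimodularity of each $H_n$, which is really a measurability and transport statement. For this, take $F:\mathcal G_{\ast\ast}\to[0,\infty]$ and define a transport $\tilde F$ on the $\{0,1\}$-vertex-edge-marked graph $(G,\xi_n)$. Set $\tilde F(G,\xi_n,u,v)=F(C_u(H_{\xi_n}),u,v)$ when $v$ lies in the $\xi_n$-component of $u$, and $\tilde F=0$ otherwise. Measurability holds because the component is determined locally by the marks. Applying the mass-transport principle of Definition~\ref{def:percolation} to $\tilde F$ yields \eqref{eq:MTP} for the law of $H_n$, since the components seen from $o$ and from $v$ coincide when $v$ lies in the component of $o$. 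Here one would cite \cite{lee2021conformal}, Corollary~4.2, for the routine details. The only convergence step needing care is the geodesic argument above, which uses connectivity of $B_k(G)$ through paths inside the ball.
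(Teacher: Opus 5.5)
Your proof is correct. The paper gives no argument of its own here and simply cites \cite{lee2021conformal}, Corollary~4.2. Your two ingredients are the standard argument behind that citation: first, capturing the finite ball $B_k(G)$ in some $H_{\xi_{n_k}}$ and using geodesics inside the ball to get $B_k(H_n)=B_k(G)$ for $n\ge n_k$; second, applying the percolation mass-transport principle to a transport supported on the root's $\xi_n$-component. One detail should be added: your convention for $o\notin H_{\xi_n}$ does not affect unimodularity. On that event both sides of \eqref{eq:MTP} for $H_n$ reduce to the same diagonal term, namely $F$ evaluated at the one-vertex graph doubly rooted at $o$, and your $\tilde F$ handles the event $\{o\in H_{\xi_n}\}$.
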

Lemma~\ref{lemma:sample-rooted} yields a sequence of finite unimodular random graphs $\{H_n\}_{n\in\N}$ that converges in distribution in $\mathcal{G}_\ast$ to a given hyperfinite unimodular random graph $G$. Since $\{H_{\xi_n}\}_{n\in\N}$ is finitary, the mass-transport principle implies that the root is uniformly distributed on its connected component $H_n=C_o(H_{\xi_n})$ (see \cite{angel2018hyperbolic}, Lemma~3.1). Therefore, by Definition~\ref{def:local-weak-convergence}, forgetting the root yields an unrooted sequence of finite random graphs converging to $G$ in distribution in the local weak sense.

\begin{corollary}\label{cor:sample-unrooted}
Let $G=(G,o)\in\mathcal{G}_\ast$ be a hyperfinite unimodular random graph with finitary exhaustion $(H_{\xi_n})_{n\in\N}$. Then the sequence of finite random unrooted graphs $\{ G_n\}_{n\in\N}\subset\mathcal{G}$ given by $G_n:=[[C_o(H_{\xi_n})]]$ converges
in distribution in the local weak sense to $G$.
\end{corollary}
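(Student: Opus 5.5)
We must show that for every $h\in C_b(\mathcal G_\ast)$,
\[
\E_{\Q}\Big[\frac{1}{|V(G_n)|}\sum_{v\in G_n} h\big(C_v(G_n)\big)\Big]\xrightarrow[n\to\infty]{}\E_{\Q}[h(G)].
\]
The plan is to replace the uniform average over vertices of $G_n=[[C_o(H_{\xi_n})]]$ by the value of $h$ at the root $o$. Then Lemma~\ref{lemma:sample-rooted} and dominated convergence finish the argument.

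\textbf{Step 1 (root is uniform on its component).} Fix $n$ and $h\in C_b(\mathcal G_\ast)$. Since $G_n$ is connected, $C_v(G_n)$ is simply $G_n$ rerooted at $v$, which equals $C_v(H_{\xi_n})$ for every $v$ in the component of $o$. I would apply the mass-transport principle (Definition~\ref{def:MTP}, in its vertex-edge-marked version from Definition~\ref{def:percolation}) to the law of $(G,\xi_n)$ with
\[
F(G,\xi,u,v):=\frac{h\big(C_v(H_\xi)\big)+\|h\|_\infty}{|C_u(H_\xi)|}\,\mathds 1_{\{v\in C_u(H_\xi)\}}.
\]
The shift by $\|h\|_\infty$ makes $F$ nonnegative, and measurability follows because $H_\xi$ has finite components. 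Summing over $v$ on the left-hand side of \eqref{eq:MTP} gives the average of $h+\|h\|_\infty$ over $C_o(H_{\xi_n})$. On the right-hand side, $u\in C_v(H_\xi)$ is equivalent to $v\in C_u(H_\xi)$, and the component size is the same for $u$ and $v$, so summing over $v$ gives exactly $h(C_o(H_{\xi_n}))+\|h\|_\infty$. Subtracting the constant, which is finite, yields
\[
\E_{\Q}\Big[\frac{1}{|V(G_n)|}\sum_{v\in G_n} h\big(C_v(G_n)\big)\Big]=\E_{\Q}\big[h(H_n)\big],\qquad H_n=C_o(H_{\xi_n}).
\]
This is the content of \cite{angel2018hyperbolic}, Lemma~3.1, cited above, so I could simply invoke it. Writing out the transport makes the rerooting bookkeeping explicit.

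\textbf{Step 2 (limit).} By Lemma~\ref{lemma:sample-rooted}, $H_n\to G$ $\Q$-almost surely in $(\mathcal G_\ast,d_\ast)$. Continuity of $h$ then gives $h(H_n)\to h(G)$ almost surely, and since $h$ is bounded, dominated convergence yields $\E_\Q[h(H_n)]\to\E_\Q[h(G)]$. Combined with Step 1, this is exactly the convergence in distribution in the local weak sense of Definition~\ref{def:local-weak-convergence}.

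\textbf{Main obstacle.} The delicate step is Step 1. Two technical points need care there: measurability of $F$ on $\mathcal G_{\ast\ast}[\{0,1\},\{0,1\}]$, and the fact that rerooting the unrooted graph $[[H_n]]$ at $v$ reproduces $C_v(H_{\xi_n})$ as an isomorphism class. The rerooting identity is immediate from the definitions. For measurability, I would argue that $\mathds 1_{\{v\in C_u(H_\xi)\}}$ and $|C_u(H_\xi)|$ are limits of functions depending on finite balls, hence measurable.
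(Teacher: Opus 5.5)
Your proof is correct and follows essentially the same route as the paper. The paper likewise combines the convergence $H_n=C_o(H_{\xi_n})\to G$ from Lemma~\ref{lemma:sample-rooted} with the mass-transport fact (Lemma~3.1 of \cite{angel2018hyperbolic}) that the root is uniformly distributed on its finite component; your explicit transport function $F$ writes out that cited lemma, and your almost-sure-plus-dominated-convergence step is an equivalent way to pass to the limit.
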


\begin{remark}\label{rem:sample-marked}
Lemma~\ref{lemma:sample-rooted} and Corollary~\ref{cor:sample-unrooted} can be extended to the marked setting. Let $(\mathcal X,d)$ be a metric space. If $(G,x)=(G,o,x)\in\mathcal G_\ast[\mathcal{X}]$ is a hyperfinite unimodular random $\mathcal{X}$-marked graph with finitary exhaustion $\{H_{\xi_n}\}_{n\in\N}$, then the sequence of finite unimodular random $\mathcal{X}$-marked graphs $\{(H_n,x_{H_n})\}_{n\in\N}$ given by $H_n:=C_o(H_{\xi_n})$ converges in distribution to $(G,x)$ in $\mathcal G_\ast[\mathcal{X}]$. Similarly, by Lemma~3.1 of \cite{angel2018hyperbolic}, this implies that the sequence of finite random unrooted $\mathcal{X}$-marked graphs $\{(G_n,x_{G_n})\}_{n\in\N}$ given by $G_n:=[[C_o(H_{\xi_n})]]$ converges in distribution in the local weak sense to $(G,x)$.
\end{remark}

\section{Model Setup}\label{sec:model}
In this section we present a general framework for stochastic games on large sparse graphs, which includes both continuous-time and discrete-time games and allows heterogeneous, path-dependent utility functionals. We also provide several examples.

\subsection{Formulation of the Game}
Let $T>0$ denote a finite deterministic time horizon and fix a filtered probability space $(\Omega, \mathcal F, \F:=(\mathcal{F}_t)_{0\leq t\leq T}, \P)$. We consider a dynamic stochastic game on a fixed (locally finite, simple) graph $G=(V,E)$, where each player interacts with their finitely many neighbors. Here we allow both finite and countably infinite graphs and thus define the finite-player and infinite-player game at the same time. We label the  players by $v\in V$ (or $v\in G$). Define the set of admissible actions
\be\label{eq:A}  
\mathcal A:=\left\{\tilde{a}:\Omega\times [0,T]\to\R \, \Big| \, \tilde{a} \textrm{ is } \mathbb{F}\textrm{-prog.~measurable, } \,\|\tilde a\|^2_{\mathcal{A}}:=\E_{\P}\Big[\int_0^T  \tilde{a}(t)^2 dt  \Big]  <\infty \right\},
\ee
where throughout, variables with a tilde denote generic elements in $\mathcal{A}$. The set $\mathcal{A}$ can be turned into a Hilbert space by equipping it with the inner product
\be
\big\langle  \tilde a_1,  \tilde a_2\big\rangle_{\mathcal{A}}:=\E_{\P}\left[\int_0^T \tilde a_1(t) \tilde a_2(t)dt\right],\quad \tilde a_1, \tilde a_2\in\mathcal{A}.
\ee
\begin{remark}\label{rem:discrete-time}
Our framework also includes discrete-time stochastic games. Namely, for a fixed finite partition of $[0,T]$ given by $\mathbb T:=\{0=t_0<t_1<\ldots<t_N=T\}$ with $N\in\N_0$, one can analogously define the set of admissible actions by 
\be\label{eq:A-discrete-time}  
\mathcal A:=\bigg\{\tilde{a}:\Omega\times \mathbb{T}\to\R \, \Big| \, \tilde a(t_j)\ \text{is }\mathcal F_{t_j}\text{-measurable for each }j, \,\,\|\tilde a\|^2_{\mathcal{A}}:=\E_{\P}\Big[\sum_{j=0}^N  \tilde{a}(t_j)^2  \Big]  <\infty \bigg\},
\ee
with corresponding inner product rendering it a Hilbert space given by 
\be
\big\langle  \tilde a_1,  \tilde a_2\big\rangle_{\mathcal{A}}:=\E_{\P}\Big[\sum_{j=0}^N \tilde a_1(t_j) \tilde a_2(t_j)\Big],\quad \tilde a_1, \tilde a_2\in\mathcal{A}.
\ee
In particular, the case where $T=0$ and $N=0$ then yields a static game. Throughout the paper, we have both the continuous-time and the discrete-time formulation in mind, and all results and proofs hold in either setting.
\end{remark}
\begin{definition}\label{def:action-profile} 
An action profile is a family $a=(a_v)_{v\in G}$ of actions $a_v\in\mathcal{A}$. Define the set of feasible action profiles as
\be\label{eq:AG}
\mathcal{A}^G := \Big\{ a=(a_v)_{v\in G} \,\Big|\, a_v\in\mathcal A,\ \|a\|_{\mathcal{A}^G}:=\sup_{v\in G}\|a_v\|_{\mathcal A}<\infty\Big\}.
\ee
rendering $\mathcal{A}^G$ a Banach space.
\end{definition}
Each player $v\in G$ selects an action $a_v$ from their individual set of admissible actions $\mathcal{A}_v\subset\mathcal{A}$, which may be heterogeneous across the players. 
Define the corresponding set of admissible action profiles as
\be\label{eq:AGad}
\mathcal{A}_{ad}^G:=\left\{a\in\mathcal{A}^G\Big| a_v\in\mathcal{A}_v\textrm{ for all }v\in G\right\}.
\ee
In the sparse setting, each player interacts with at most finitely many other players. The interaction effects experienced by player $v\in G$ are defined as the local aggregate
\begin{equation}\label{eq:local-aggregate}
z_v(a):= (\boldsymbol{G}a)(v) := \frac{\mathds{1}_{\{\deg_G(v)>0\}}}{\deg_G(v)}\sum_{u\sim v} a_u,\quad a\in\mathcal{A}^G,\ v\in G,
\end{equation}
where we use the convention that $0/0:=0$ and note that the denominator in \eqref{eq:local-aggregate} is finite, since $G$ is locally finite by assumption.
\begin{remark}\label{rem:local-aggregate-operator}
Note that the local aggregate operator $\boldsymbol{G}$ on $\mathcal{A}^G$ from \eqref{eq:local-aggregate} is linear. Its operator norm given by
\be\label{eq:operator-norm}
\|\boldsymbol{G}\|_{\text{op}}:=\sup\left\{ \|\boldsymbol{G} a\|_{\mathcal{A}^G}:a\in \mathcal{A}^G, \|a\|_{\mathcal{A}^G}\leq 1 \right\}
\ee
is equal to 1 (unless $G$ has no edges, then $\|\boldsymbol{G}\|_{\text{op}}= 0$), thus, $\boldsymbol{G}$ is bounded. We could have also defined $\boldsymbol{G}$ in another way. For instance, if the vertex degrees of $G$ are uniformly bounded by $D\in\N$, the alternative definition 
$$
(\boldsymbol{G}a)(v) := \frac{1}{D}\sum_{u\sim v} a_u,\quad a\in\mathcal{A}^G,\ v\in G,
$$
yields a linear bounded operator with $\|\boldsymbol{G}\|_{\text{op}}\leq 1$, and all the subsequent analysis carries over with minimal modifications.
\end{remark}
Next, let $\theta=(\theta_v)_{v\in G}\in\mathcal{A}^G$ be a heterogeneity profile consisting of processes $\theta_v\in\mathcal{A}$. These can contain both idiosyncratic and common noise and thereby allow us to incorporate heterogeneity among the players. Consider a universal utility functional 
\be \label{eq:U}
U:\mathcal{A}\times\mathcal{A}\times\mathcal{A}\to\R,
\ee
acting directly on player actions, local aggregates, and heterogeneity processes. Let the individual utility functional of player $v$ on $\mathcal{A}_v$ be given by 
\be 
a_v\mapsto U\left(a_v,z_v(a),\theta_v\right).
\ee
In line with \cite{neuman2025stochastic,parise2023graphon}, we denote this game by $\mathbb{G}(\mathcal{A}_{ad}^G,U,\theta,G)$.
\begin{definition} \label{def:Nash}
An admissible action profile $(\bar{a}_v)_{v\in G}\in\mathcal{A}_{ad}^G$ is called a Nash equilibrium of the game $\mathbb{G}(\mathcal{A}_{ad}^G,U,\theta,G)$ if for every $v\in G$ the action $\bar a_v$ satisfies 
\be \label{eq:Nash}
\bar a_v=\argmax_{\tilde {a}\in\mathcal{A}_v} \ U\left(\tilde{a}, z_v(\bar a),\theta_v\right).
\ee
\end{definition}
We will also need the more general notion of approximate Nash equilibria.
\begin{definition}\label{def:eps-Nash} Let $\eps>0$.
An admissible action profile $(a_v)_{v\in G}\in\mathcal{A}_{ad}^G$ is called an $\eps$-Nash equilibrium of the game $\mathbb{G}(\mathcal{A}_{ad}^G,U,\theta,G)$ if for every $v\in G$ the action $ a_v$ satisfies 
\begin{equation}\label{eq:eps-Nash}
U(a_v,z_v(a),\theta_v)\ge
\sup_{\tilde a\in\mathcal A_v} U\left(\tilde a,z_v(a),\theta_v\right) -\varepsilon.
\end{equation}
\end{definition}

\subsection{Examples}
Note that our formulation does not model state variables explicitly. Instead, each player's payoff is specified through the universal functional $U$ in \eqref{eq:U}, which acts directly on progressively measurable processes. This yields a flexible setup that encompasses, for instance, the following classes of games.

\begin{example}\label{example:U-1}
A basic class of games is given by functionals of the form
\[
U(\tilde a,\tilde z,\tilde\theta)
= \E_\P\!\left[\int_0^T f_U\big(\tilde a(t),\tilde z(t),\tilde\theta(t)\big)\,dt\right],
\]
where $f_U:\R^3\to\R$ specifies a running payoff. By considering a discrete time grid $\mathbb T$ equipped with the counting measure in the sense of Remark~\ref{rem:discrete-time}, one recovers discrete-time games.
\end{example}

\begin{example}\label{example:U-2}
A further important family with various applications is given by linear--quadratic Volterra functionals,
\[
U(\tilde a,\tilde z,\tilde\theta)
=
\langle \tilde a, \boldsymbol A_1 \tilde a\rangle_{\mathcal A}
+\langle \tilde a, \boldsymbol A_2 \tilde z\rangle_{\mathcal A}
+\langle \tilde z, \boldsymbol A_2 \tilde a\rangle_{\mathcal A}
+\langle \tilde z, \boldsymbol A_3 \tilde z\rangle_{\mathcal A}
+\langle \tilde a,\tilde\theta\rangle_{\mathcal A}
+\langle \tilde z,\tilde\theta^\ast\rangle_{\mathcal A},
\]
where $\boldsymbol A_1,\boldsymbol A_2,\boldsymbol A_3$ are Volterra operators on $L^2([0,T])$ extended to $\mathcal{A}$ in the natural way and $\tilde\theta^\ast\in\mathcal A$ is an exogenous input (e.g.~common noise). The Volterra structure naturally encodes memory, allowing $U$ to depend on the past of $\tilde a$ and $\tilde z$ in a non-Markovian way. In the finite-player setting, explicit Nash equilibria for this class were derived in \cite{neuman2024stochastic}.
\end{example}

\begin{example}\label{example:U-3}
A large class of games with linear state variables can be rewritten in the above state-free form by explicitly solving the state equation and substituting the solution into the objective. Let $G$ be any simple, locally finite graph and fix square-integrable Volterra kernels $K,L:[0,T]^2\to\R$ together with idiosyncratic noise processes $\eta=(\eta_v)_{v\in G}\in\mathcal A^G$. 
Given an action profile $a=(a_v)_{v\in G}\in\mathcal A^G$, suppose each player $v\in G$ has a controlled state process $X_v\in\mathcal{A}$ following the linear dynamics
\begin{equation}\label{eq:linear-states}
X_v(t)=\int_0^t K(t,s)X_v(s)ds+\int_0^t L(t,s)a_v(s)ds+\eta_v(t),\quad t\in[0,T].
\end{equation}
Define the corresponding local aggregate state $Z_v\in\mathcal A$ by
\[
Z_v := z_v(X) = (\boldsymbol G X)(v),\quad\text{i.e.,}\quad Z_v(t)=\frac{\mathds{1}_{\{\deg_{G}(v)>0\}}}{\deg_{G}(v)}\sum_{u\sim v} X_u(t),\quad t\in[0,T].
\]
Let $R$ denote the resolvent of the kernel $(-K)$ and set
\[
(R\star L)(t,s):=\int_0^t R(t,u)L(u,s)du,\quad t,s\in [0,T].
\]
Then \eqref{eq:linear-states} admits the explicit representation
\begin{equation}\label{eq:linear-states-resolved}
X_v(t)=\int_0^t \big(L-(R\star L)\big)(t,s)a_v(s)ds-\int_0^t R(t,s)\eta_v(s)ds+\eta_v(t),\quad t\in[0,T],
\end{equation}
by Chapter~9.3 of \cite{gripenberg1990volterra}. Consider payoffs of the classical form
\begin{equation}\label{eq:Jv}
J_v(a_v;(a_u)_{u\neq v})=\E_\P\left[\int_0^T f_J\big(t,X_v(t),Z_v(t),a_v(t)\big)dt+g_J\big(X_v(T),Z_v(T)\big)\right],
\end{equation}
with running payoff $f_J:\R^4\to\R$ and terminal payoff $g_J:\R^2\to\R$. Solving for the state variables via \eqref{eq:linear-states-resolved} and substituting into \eqref{eq:Jv} yields a representation of the form
\[
J_v(a_v;(a_u)_{u\neq v}) = U\big(a_v, z_v(a), \theta_v\big),
\]
for a suitable universal functional $U$ and a suitable heterogeneity profile $(\theta_v)_{v\in G}\in\mathcal A^G$. Hence, games with linear state dynamics can be embedded into our general framework once the associated state equations are solved explicitly.
\end{example}

\section{Main Results}\label{sec:results}
This section presents our main results, including the existence and uniqueness of equilibria, correlation decay of equilibrium actions, local approximation and local reconstruction of equilibria, and convergence of equilibria along locally weakly convergent graph sequences.

\subsection{Existence and Uniqueness}\label{subsec:existence-uniqueness}
In line with \cite{neuman2025stochastic,parise2023graphon}, we focus on continuously differentiable and strongly concave utility functionals to study the Nash equilibrium of the game and its properties.

\begin{assumption}\label{assum:U}
For all $\tilde z,\tilde\theta\in\mathcal{A}$, the utility functional $U(\tilde a,\tilde z,\tilde \theta)$ in \eqref{eq:U} is continuously G\^ateaux differentiable in $\tilde a$ and strongly concave in $\tilde a$ with strong concavity constant $\gamma_U>0$, that is, $\smash{U(\tilde a,\tilde z,\tilde \theta)+\frac{\gamma_U}{2}\|\tilde a\|^2_{\mathcal{A}}}$ is concave in $\tilde a$. Moreover, $\smash{\nabla_{\tilde a} U(\cdot,\tilde z,\tilde \theta)}$ is Lipschitz continuous in $\tilde z,\tilde\theta$ with constants $\ell_U,\ell_\theta$, and it holds that 
\be\label{eq:rho}
\rho_{\raisebox{0ex}{\tiny\eqref{eq:rho}}}:=\ell_U/\gamma_U<1.
\ee
\end{assumption}

\begin{assumption}\label{assum:argmax}
For each player $v\in G$, the set of admissible actions $\mathcal{A}_v$ from \eqref{eq:AGad} is nonempty, convex, and closed. Moreover, there exist $\tilde z^0, \tilde\theta^0 \in\mathcal{A}$ such that 
$$
\big(\argmax_{\tilde{a}\in \mathcal{A}_v} U(\tilde{a}, \tilde z^0,\tilde\theta^0)\big)_{v\in G}\in\mathcal{A}^G.
$$
\end{assumption}
We note that the second part of Assumption~\ref{assum:argmax} always holds in the finite-player case. The following theorem establishes existence and uniqueness of a Nash equilibrium as introduced in Definition~\ref{def:Nash}.
\begin{theorem}\label{thm:NE}
Suppose that Assumptions~\ref{assum:U} and \ref{assum:argmax} hold. Then the game $\mathbb{G}(\mathcal{A}_{ad}^G,U,\theta,G)$ admits a unique Nash equilibrium for any simple, locally finite graph $G$.
\end{theorem}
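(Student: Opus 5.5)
The plan is to realise the Nash equilibrium as the unique fixed point of a best-response operator on $\mathcal{A}^G_{ad}$ and to apply the Banach fixed-point theorem. For $\tilde z,\tilde\theta\in\mathcal{A}$ and $v\in G$, define
\[
B_v(\tilde z,\tilde\theta):=\argmax_{\tilde a\in\mathcal{A}_v} U(\tilde a,\tilde z,\tilde\theta).
\]
By Assumption~\ref{assum:U}, the map $\tilde a\mapsto U(\tilde a,\tilde z,\tilde\theta)$ is continuous (it is continuously Gâteaux differentiable), so it is upper semicontinuous and strongly concave on the Hilbert space $\mathcal{A}$. By Assumption~\ref{assum:argmax}, $\mathcal{A}_v$ is nonempty, closed and convex. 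Strong concavity gives coercivity (for instance via the bound $U(\tilde a)\le U(a_0)+\langle\nabla U(a_0),\tilde a-a_0\rangle-\frac{\gamma_U}{2}\|\tilde a-a_0\|^2$), so superlevel sets are bounded, closed and convex, hence weakly compact. Concavity also gives weak upper semicontinuity. Together these yield existence of a maximiser, and strict concavity yields its uniqueness.

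The maximiser is characterised by the variational inequality
\[
\langle \nabla_{\tilde a}U(B_v(\tilde z,\tilde\theta),\tilde z,\tilde\theta),\,b-B_v(\tilde z,\tilde\theta)\rangle_{\mathcal{A}}\le 0\qquad\text{for all } b\in\mathcal{A}_v.
\]

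The key estimate is a Lipschitz bound for $B_v$. Write $a_i=B_v(\tilde z_i,\tilde\theta_i)$ for $i=1,2$. Adding the two variational inequalities, tested with $b=a_2$ and $b=a_1$ respectively, and using strong monotonicity of $-\nabla_{\tilde a}U$, namely
\[
\langle \nabla U(a_1,\cdot)-\nabla U(a_2,\cdot),a_1-a_2\rangle\le-\gamma_U\|a_1-a_2\|^2,
\]
together with the Lipschitz continuity of $\nabla U$ in $(\tilde z,\tilde\theta)$, one obtains
\[
\gamma_U\|a_1-a_2\|^2\le \big(\ell_U\|\tilde z_1-\tilde z_2\|+\ell_\theta\|\tilde\theta_1-\tilde\theta_2\|\big)\|a_1-a_2\|.
\]
Hence
\[
\|a_1-a_2\|_{\mathcal{A}}\le\rho\|\tilde z_1-\tilde z_2\|_{\mathcal{A}}+(\ell_\theta/\gamma_U)\|\tilde\theta_1-\tilde\theta_2\|_{\mathcal{A}},\qquad \rho=\rho_{\raisebox{0ex}{\tiny\eqref{eq:rho}}}.
\]
Importantly, this bound is uniform in $v$.

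Next define $\Phi(a):=(B_v(z_v(a),\theta_v))_{v\in G}$. The main obstacle, specific to infinite $G$, is checking that $\Phi$ maps $\mathcal{A}^G$ into $\mathcal{A}^G_{ad}$, i.e.\ that $\sup_v\|\Phi(a)_v\|_{\mathcal{A}}<\infty$. This is exactly what the second part of Assumption~\ref{assum:argmax} provides. With $\tilde z^0,\tilde\theta^0$ as there, the Lipschitz bound gives
\[
\|\Phi(a)_v\|\le \sup_u\|B_u(\tilde z^0,\tilde\theta^0)\|+\rho\big(\|z_v(a)\|+\|\tilde z^0\|\big)+(\ell_\theta/\gamma_U)\big(\|\theta_v\|+\|\tilde\theta^0\|\big).
\]
This is bounded uniformly in $v$, because $\|z_v(a)\|\le\|a\|_{\mathcal{A}^G}$ (the operator norm of $\boldsymbol G$ is at most $1$, Remark~\ref{rem:local-aggregate-operator}) and $\theta\in\mathcal{A}^G$. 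Membership $\Phi(a)_v\in\mathcal{A}_v$ holds by construction.

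The set $\mathcal{A}^G_{ad}$ is a closed subset of the Banach space $\mathcal{A}^G$, since each $\mathcal{A}_v$ is closed and convergence in $\mathcal{A}^G$ implies coordinatewise convergence. It is nonempty, since it contains $\Phi(0)$. Contraction follows from the Lipschitz bound and linearity of $\boldsymbol G$:
\[
\|\Phi(a)-\Phi(a')\|_{\mathcal{A}^G}\le\rho\sup_v\|z_v(a-a')\|\le\rho\|a-a'\|_{\mathcal{A}^G},
\]
with $\rho<1$. Banach's theorem then yields a unique fixed point $\bar a\in\mathcal{A}^G_{ad}$. By Definition~\ref{def:Nash}, fixed points of $\Phi$ in $\mathcal{A}^G_{ad}$ are exactly the Nash equilibria, which gives both existence and uniqueness.
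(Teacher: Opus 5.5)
Your proposal is correct and follows essentially the same route as the paper. Both arguments rest on a uniform-in-$v$ Lipschitz bound for the best response, use Assumption~\ref{assum:argmax} to show that $\boldsymbol B_\theta\boldsymbol G$ maps into $\mathcal A^G$, and finish with Banach's fixed-point theorem with contraction constant $\rho$. The differences are minor: you derive the existence of the argmax and the variational-inequality sensitivity estimate by hand where the paper asserts or cites them, and you run the fixed-point argument on the closed subset $\mathcal A^G_{ad}$ rather than on all of $\mathcal A^G$.
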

The proof of Theorem~\ref{thm:NE} is given in Section~\ref{sec:proofs-existence-uniqueness}. 
\begin{remark}
The condition $\rho_{\raisebox{0ex}{\tiny\eqref{eq:rho}}}<1$ will be essential throughout the paper. It ensures that the optimization problem can be cast via a contraction principle: the Nash equilibrium is characterized as the unique fixed point of an appropriate best-response operator, with $\rho_{\raisebox{0ex}{\tiny\eqref{eq:rho}}}$ acting as a contraction constant. This contraction property not only yields existence and uniqueness via Banach's fixed-point theorem, but also provides geometric error bounds for the associated Picard iterates. Such bounds allow for efficient approximation of the Nash equilibrium and will serve as a central mechanism in the proofs of our other results (see Propositions~\ref{prop:correlation-decay}, \ref{prop:local-approximation}, \ref{prop:epsilon-Nash}, \ref{prop:local-reconstruction}, and Theorem~\ref{thm:convergence}).
\end{remark}

\subsection{Correlation Decay}\label{subsec:correlation-decay}
The following result shows that, under independence of the players' heterogeneity processes, the equilibrium actions exhibit exponential decay of correlations with graph distance.

\begin{assumption}\label{assum:Av}
There exists a constant $M_{\raisebox{0ex}{\tiny\eqref{eq:A_M}}}>0$ such that for every $v\in G$, the set of admissible actions $\mathcal{A}_v$ introduced before \eqref{eq:AGad} satisfies
\be\label{eq:A_M}
\mathcal{A}_v \subset \mathcal{A}_{M_{\raisebox{0ex}{\tiny\eqref{eq:A_M}}}}:=\big\{ \tilde a \in \mathcal A \,\big|\,  \|\tilde a\|_\mathcal{A}   \leq M_{\raisebox{0ex}{\tiny\eqref{eq:A_M}}} \big\}.
\ee
\end{assumption}
For a graph $G$ and a subgraph $H\subset G$, denote by $a_H\in\mathcal{A}^H$ the restriction of an action profile $a\in\mathcal{A}^G$ to $H$. Consistent with \eqref{eq:AG}, endow $\mathcal A^{H}$ with the supremum norm $\|a\|_{\mathcal{A}^H}:=\sup_{v\in H}\|a_v\|_{\mathcal{A}}$. Similarly, equip the associated path space $\mathcal{L}^H:=(L^2([0,T]))^H$ with the norm $\|\psi\|_{\mathcal{L}^H}:=\sup_{v\in H}\|\psi_v\|_{L^2}$ for $\psi\in\mathcal{L}^H$ and let $d_H$ be the induced metric
\be\label{eq:dH}
d_H(\psi,\psi'):=\sup_{v\in H}\|\psi_v-\psi'_v\|_{L^2}.
\ee
For a function $f:\mathcal{L}^H\to\R$ that is bounded and Lipschitz with respect to $d_{H}$, define the bounded Lipschitz norm 
\be \label{eq:BL-norm}
\|f\|_{\text{BL}}:=\max\big\{\|f\|_\infty,\mathrm{Lip}(f)\big\},
\ee
where $\|f\|_\infty$ is the supremum norm and $\mathrm{Lip}(f)$ is the Lipschitz constant of $f$. Recall the definition of the graph distance $d_G$ from Section~\ref{subsec:graphs}.
\begin{proposition}\label{prop:correlation-decay}
Suppose that Assumptions~\ref{assum:U}, \ref{assum:argmax}, and \ref{assum:Av} hold. Assume that the heterogeneity processes $(\theta_v)_{v\in G}$ are independent under $\P$ and that $\smash{\argmax_{\tilde{a}\in \mathcal{A}_v} U(\tilde{a}, \tilde z,\theta_v)}$ is $\sigma(\tilde z,\theta_v)$-measurable for all $\tilde z\in\mathcal{A}$ and $v\in G$. Let $\bar a\in\mathcal A^G$ denote the unique Nash equilibrium of $\mathbb{G}(\mathcal{A}_{ad}^G,U,\theta,G)$. For $i=1,2$, let $H_i\subset G$ be a finite induced subgraph and let $f_i:\mathcal L^{H_i}\to\R$ be bounded and Lipschitz with respect to $d_{H_i}$. Then, 
\[
\big|\mathrm{Cov}_{\P}\big(f_1(\bar a_{H_1}),f_2(\bar a_{H_2})\big)\big|
\le 2\rho_{\raisebox{0ex}{\tiny\eqref{eq:rho}}}^{k} M_{\raisebox{-0.3ex}{\tiny\eqref{eq:A_M}}}(|H_1|+|H_2|)\,\|f_1\|_{\text{BL}}\|f_2\|_{\text{BL}},
\]
where 
\be\label{eq:graph-distance}
k=\big\lceil  d_G(H_1,H_2)/2\big\rceil.
\ee
\end{proposition}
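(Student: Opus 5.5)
The plan is to approximate $\bar a$ by Picard iterates of the best-response operator and use the finite speed at which information spreads through these iterates. Define the best-response map $\Phi:\mathcal A^G_{ad}\to\mathcal A^G_{ad}$ by $\Phi(a)_v:=\argmax_{\tilde a\in\mathcal A_v}U(\tilde a,z_v(a),\theta_v)$. As in the proof of Theorem~\ref{thm:NE}, strong concavity and the Lipschitz property of $\nabla_{\tilde a}U$ in $\tilde z$ give $\|\Phi(a)_v-\Phi(a')_v\|_{\mathcal A}\le\rho\|z_v(a)-z_v(a')\|_{\mathcal A}$. Since $z_v$ is a neighbour average, $\|\Phi(a)-\Phi(a')\|_{\mathcal A^G}\le\rho\|a-a'\|_{\mathcal A^G}$, so $\bar a$ is the unique fixed point.

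\textbf{Step 1 (the iterates are local and independent across distant regions).} Start from the deterministic profile $a^{(0)}\equiv 0$, or from any fixed deterministic element of $\mathcal A_v$, and set $a^{(j+1)}=\Phi(a^{(j)})$. By the measurability hypothesis, $a^{(1)}_v$ is $\sigma(\theta_v)$-measurable. Inductively, $a^{(j)}_v$ is measurable with respect to $\sigma(\theta_u: d_G(u,v)\le j-1)\subset\sigma(\theta_u:d_G(u,v)\le j)$, because $z_v(a^{(j)})$ involves only the neighbours of $v$. Take $k=\lceil d_G(H_1,H_2)/2\rceil$. We need the balls of radius $k-1$ (or $k$, depending on the indexing) around $H_1$ and $H_2$ to be disjoint. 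Choose the iteration index $m$ so that $a^{(m)}_{H_i}$ depends only on $\theta$ restricted to $B_{m-1}(H_i)$. These two sets are disjoint as soon as $2(m-1)<d_G(H_1,H_2)$, which holds for $m=k$. Independence of the $\theta_v$ then makes $a^{(k)}_{H_1}$ and $a^{(k)}_{H_2}$ independent, so $\mathrm{Cov}(f_1(a^{(k)}_{H_1}),f_2(a^{(k)}_{H_2}))=0$. The main obstacle is this index bookkeeping. If the radius count is off by one, start the iteration at $a^{(0)}=\Phi(\text{const})$, which depends only on the own $\theta_v$, to gain a step.

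\textbf{Step 2 (error bound).} By the contraction property, $\|\bar a-a^{(k)}\|_{\mathcal A^G}\le\rho^k\|\bar a-a^{(0)}\|_{\mathcal A^G}\le\rho^k M$. To get exactly $M$ rather than $2M$, start from $a^{(0)}=0$ and use $\|\bar a_v\|\le M$ from Assumption~\ref{assum:Av}. If $0\notin\mathcal A_v$, one extends $\Phi$ to inputs in $\mathcal A^G$, which is harmless since only $z_v$ enters. Write $X_i=f_i(\bar a_{H_i})$ and $Y_i=f_i(a^{(k)}_{H_i})$. Then
\[
\mathrm{Cov}(X_1,X_2)=\mathrm{Cov}(X_1-Y_1,X_2)+\mathrm{Cov}(Y_1,X_2-Y_2)+\mathrm{Cov}(Y_1,Y_2),
\]
and the last term vanishes by Step 1. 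For the first term, $|\mathrm{Cov}(X_1-Y_1,X_2)|\le 2\|f_2\|_\infty\E|X_1-Y_1|$. The factor $2$ comes from the bound $|\mathrm{Cov}(A,B)|\le 2\|B\|_\infty\E|A|$.

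Next, using the Lipschitz property in $d_{H_1}$ and bounding the supremum over $H_1$ by a sum,
\[
\E|X_1-Y_1|\le \mathrm{Lip}(f_1)\sum_{v\in H_1}\E\|\bar a_v-a^{(k)}_v\|_{L^2}\le \mathrm{Lip}(f_1)|H_1|\rho^kM,
\]
where the last step uses Jensen, $\E\|\cdot\|_{L^2}\le\|\cdot\|_{\mathcal A}$. The second term is handled symmetrically with $|H_2|$. Adding the two pieces gives
\[
\big|\mathrm{Cov}(X_1,X_2)\big|\le 2\rho^kM(|H_1|+|H_2|)\|f_1\|_{\mathrm{BL}}\|f_2\|_{\mathrm{BL}},
\]
which is the claimed estimate.
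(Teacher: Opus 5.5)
Your proposal is correct and follows the paper's proof essentially step for step: you take Picard iterates of the best-response map from $a^{(0)}\equiv 0$, use that $a^{(k)}_{H_i}$ is $\sigma(\theta_u: u\in B_{k-1}(H_i))$-measurable (hence independent across $H_1,H_2$ once $2k-2<d_G(H_1,H_2)$), apply the geometric bound $\rho^k M$, and split the covariance using $|\mathrm{Cov}(A,B)|\le 2\|B\|_\infty\E|A|$. Your index count $m=k=\lceil d_G(H_1,H_2)/2\rceil$ is already correct, so the fallback of starting the iteration from $\Phi(\text{const})$ is unnecessary.
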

The proof of Proposition~\ref{prop:correlation-decay} is given in Section~\ref{sec:proofs-propositions}.

\begin{figure}[htb]
\centering
\includegraphics[width=0.95\linewidth]{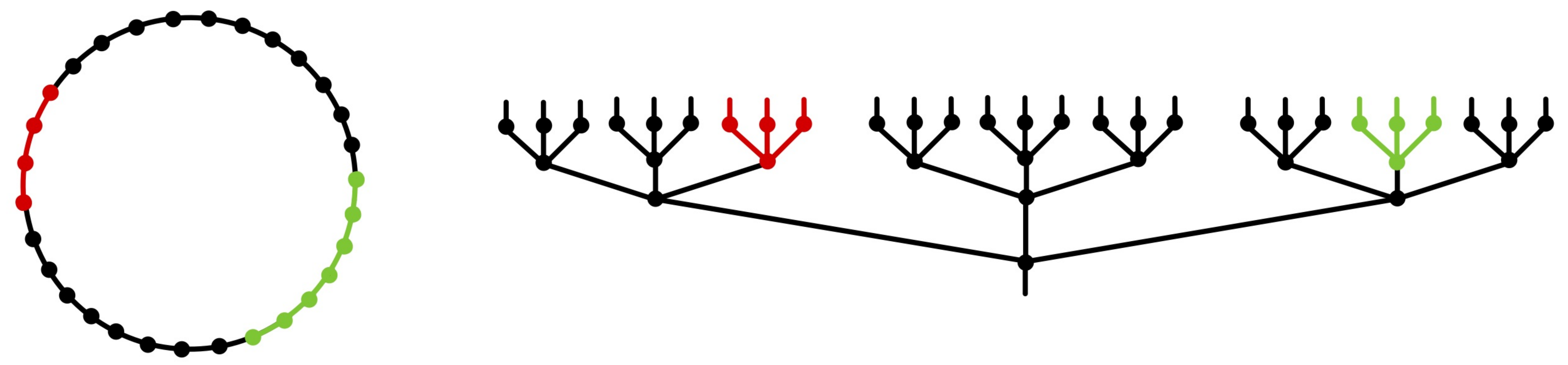}
\caption{Left: the circle graph with 30 vertices $G=C_{30}$ with two induced subgraphs $H_1\subset G$ (red) and $H_2\subset G$ (green). It holds that $d_G(H_1,H_2)=9$ and thus $k=5$ in \eqref{eq:graph-distance}. Right: an excerpt of the 4-regular tree $G'=T_4$ with two induced subgraphs $H'_1\subset G'$ (red) and $H'_2\subset G'$ (green). It holds that $d_{G'}(H'_1,H'_2)=4$ and thus $k=2$ in \eqref{eq:graph-distance}.}
\end{figure}
\begin{remark}
Proposition~\ref{prop:correlation-decay} establishes that correlations in the Nash equilibrium decay exponentially fast with respect to graph distance. Importantly, this is a statement for the equilibrium controls on the full time horizon. That is, the covariance is taken between functionals of entire trajectories of $(\bar a_u)_{u\in H_i}$, rather than describing a decay of correlations in time. This is conceptually different from correlation decay results for interacting particle systems (see \cite{lacker2023local}, Lemmas~5.1 and 5.2).
\end{remark}

\subsection{Local Approximation}\label{subsec:local-approximation}
It can be desirable to approximate the equilibrium action of a player $v\in G$ using only local information in a neighborhood of $v$. The next result formalizes such a local approximation of the Nash equilibrium. 

For $v\in G$ and $k\in\N_0$, denote the $k$-ball around $v$ by $B_k(G,v)$. Given a local action profile $a=(a_u)_{u\in B_k(G,v)}\in \mathcal A^{B_k(G,v)}$, define the truncated local aggregate by
\be\label{eq:local-aggregate-trunc}
z^{(v,k)}_u(a)
:= (\boldsymbol G^{(v,k)}a)(u)
:=\frac{\mathds{1}_{\{\deg_G(u)>0\}}}{\deg_G(u)}\sum_{\substack{w\sim u\\ w\in B_k(G,v)}} a_w,
\quad u\in B_k(G,v).
\ee
Note that we normalize by $\smash{\deg_G(u)}$ and not by $\smash{\deg_{B_k(G,v)}(u)}$ in \eqref{eq:local-aggregate-trunc} when restricting to the neighborhood $B_k(G,v)$. Recalling \eqref{eq:operator-norm}, the operator $\boldsymbol G^{(v,k)}$ on $\mathcal A^{B_k(G,v)}$ satisfies $\|\boldsymbol G^{(v,k)}\|_{\operatorname{op}}\le 1$. We refer to this truncated local game on $B_k(G,v)$ as $\mathbb{G}^{(v,k)}(\mathcal{A}_{ad}^G,U,\theta,G)$. Under the assumptions of Theorem~\ref{thm:NE}, existence and uniqueness follow as for the game on the original graph $G$. 

\begin{proposition}\label{prop:local-approximation}
Suppose that Assumptions~\ref{assum:U}, \ref{assum:argmax}, and \ref{assum:Av} hold. Let $\bar a\in\mathcal A^G$ be the unique Nash equilibrium of $\mathbb{G}(\mathcal{A}_{ad}^G,U,\theta,G)$, and fix $v\in G$ and $k\in\N_0$. Let $\smash{\bar a^{(v,k)}\in\mathcal A^{B_k(G,v)}}$ be the unique Nash equilibrium of the truncated local game $\mathbb{G}^{(v,k)}(\mathcal{A}_{ad}^G,U,\theta,G)$. Then,
\be\label{eq:local-approx-bound}
\|\bar a_v-\bar a^{(v,k)}_v\|_{\mathcal A}\le 2\rho_{\raisebox{0ex}{\tiny\eqref{eq:rho}}}^{k} M_{\raisebox{-0.3ex}{\tiny\eqref{eq:A_M}}}.
\ee
\end{proposition}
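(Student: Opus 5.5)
Write the best-response map $\mathcal B_v(\tilde z):=\argmax_{\tilde a\in\mathcal A_v}U(\tilde a,\tilde z,\theta_v)$. Under Assumption~\ref{assum:U}, $\mathcal B_v$ is well defined by strong concavity and closed convexity of $\mathcal A_v$. By the standard variational-inequality argument, it is Lipschitz in $\tilde z$ with constant $\rho:=\rho_{\raisebox{0ex}{\tiny\eqref{eq:rho}}}$: test the first-order conditions for $\mathcal B_v(\tilde z)$ and $\mathcal B_v(\tilde z')$ against each other, then combine strong monotonicity of $-\nabla_{\tilde a}U$ with constant $\gamma_U$ and the $\ell_U$-Lipschitz dependence on $\tilde z$. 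This is the ingredient behind Theorem~\ref{thm:NE}, so I take it as available. The global equilibrium then satisfies $\bar a_u=\mathcal B_u(z_u(\bar a))$ for all $u\in G$. The local equilibrium satisfies $\bar a^{(v,k)}_u=\mathcal B_u(z^{(v,k)}_u(\bar a^{(v,k)}))$ for $u\in B_k(G,v)$.

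The plan is an induction on the distance from the boundary of the ball. Define $e_j:=\sup_{u\in B_{k-j}(G,v)}\|\bar a_u-\bar a^{(v,k)}_u\|_{\mathcal A}$ for $j=0,\dots,k$. For the base case, Assumption~\ref{assum:Av} gives $e_0\le 2M$, since both actions lie in $\mathcal A_u\subset\mathcal A_M$. For the inductive step, take $u\in B_{k-j}(G,v)$ with $j\ge1$. Every neighbour $w$ of $u$ lies in $B_{k-j+1}(G,v)\subset B_k(G,v)$. Hence the truncated aggregate at $u$ uses all neighbours, and $z^{(v,k)}_u(a)=z_u(a)$ there; this is exactly why the paper normalizes by $\deg_G(u)$. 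Therefore
\[
\|\bar a_u-\bar a^{(v,k)}_u\|_{\mathcal A}\le \rho\,\Big\|\frac{1}{\deg_G(u)}\sum_{w\sim u}(\bar a_w-\bar a^{(v,k)}_w)\Big\|_{\mathcal A}\le \rho\, e_{j-1}.
\]
The first inequality is the Lipschitz property of $\mathcal B_u$, and the second is the triangle inequality. If $\deg_G(u)=0$, both aggregates vanish and the difference is $0$. Thus $e_j\le\rho e_{j-1}$, and so $e_k\le 2\rho^kM$. Since $B_0(G,v)=\{v\}$, this is \eqref{eq:local-approx-bound}.

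The only delicate step is the Lipschitz bound $\|\mathcal B_u(\tilde z)-\mathcal B_u(\tilde z')\|_{\mathcal A}\le\rho\|\tilde z-\tilde z'\|_{\mathcal A}$ for the constrained argmax. If it is not already isolated in the proof of Theorem~\ref{thm:NE}, I would prove it as follows. Set $a=\mathcal B_u(\tilde z)$ and $a'=\mathcal B_u(\tilde z')$. The optimality conditions give $\langle\nabla_{\tilde a}U(a,\tilde z,\theta_u),a'-a\rangle\le0$ and $\langle\nabla_{\tilde a}U(a',\tilde z',\theta_u),a-a'\rangle\le0$. Adding these, and inserting $\nabla_{\tilde a}U(a',\tilde z,\theta_u)$, yields $\gamma_U\|a-a'\|^2\le\ell_U\|\tilde z-\tilde z'\|\,\|a-a'\|$. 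A minor point is that the global equilibrium exists with $\|\bar a\|_{\mathcal A^G}<\infty$ (Theorem~\ref{thm:NE}), and the local one exists by the same theorem applied to the operator $\boldsymbol G^{(v,k)}$, whose operator norm is at most $1$. No contraction of a global operator is needed beyond this.
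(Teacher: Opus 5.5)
Your proof is correct and reaches the same bound $2M\rho^k$, but by a different decomposition than the paper.

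The paper never compares the two equilibria directly. It runs two Picard iterations from $0$: the global one $a^{(m)}=\boldsymbol B_\theta\boldsymbol G a^{(m-1)}$ and the analogous one on $B_k(G,v)$. It then observes that $a^{(m)}_u=a^{(v,k),(m)}_u$ for all $u\in B_{k-m}(G,v)$ and $m\le k$. This rests on the same neighbour-containment fact you use: for $u$ away from the outer shell, the truncated aggregate coincides with the full one. Next it uses the contraction of each operator to get $\|a^{(k)}_v-\bar a_v\|_{\mathcal A}\le M\rho^k$ and $\|a^{(v,k),(k)}_v-\bar a^{(v,k)}_v\|_{\mathcal A}\le M\rho^k$. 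The triangle inequality then finishes the proof.

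You instead work with the two fixed-point equations themselves. You take the a priori discrepancy $2M$ on the outer shell and damp it by a factor $\rho$ per layer as you move inward.

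Your route is more self-contained. It needs only the pointwise Lipschitz bound on the best response and the existence of the two equilibria, not the convergence of Picard iterates. It also makes visible that the error is purely a boundary effect: $\|\bar a_u-\bar a^{(v,k)}_u\|_{\mathcal A}\le 2M\rho^{k-d_G(u,v)}$ on the whole ball, although the paper's argument yields this as well.

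The paper's route reuses the finite-propagation-speed property of Picard iterates, which also drives its correlation-decay proof and its convergence theorem, so the same mechanism appears throughout. It also has a computational reading. The $k$-th iterate at $v$ depends only on data in $B_k(G,v)$ and is already within $M\rho^k$ of $\bar a_v$, so one need not solve the truncated game exactly to obtain the approximation.
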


The proof of Proposition~\ref{prop:local-approximation} is given in Section~\ref{sec:proofs-propositions}.

\begin{remark}\label{rem:local-approximation-tradeoff}
Proposition~\ref{prop:local-approximation} quantifies a tradeoff between locality and accuracy. To approximate the global equilibrium action $\bar a_v$ of a given player $v\in G$ within a prescribed tolerance $\delta>0$, it suffices to solve the truncated local game on a ball of radius
\[
k \ge \left\lceil \frac{\log(\delta/(2M_{\raisebox{-0.2ex}{\tiny\eqref{eq:A_M}}}))}{\log\rho_{\raisebox{-0.2ex}{\tiny\eqref{eq:rho}}}}\right\rceil,
\]
since then $2\rho_{\raisebox{0ex}{\tiny\eqref{eq:rho}}}^{k}M_{\raisebox{-0.2ex}{\tiny\eqref{eq:A_M}}}\le \delta$ in \eqref{eq:local-approx-bound}. That is, increasing the radius $k$ enlarges the ball $B_k(G,v)$, but yields an exponentially more accurate local approximation of $\bar a_v$.
\end{remark}
Proposition~\ref{prop:local-approximation} provides a local approximation of the global Nash equilibrium action profile $\bar a\in\mathcal A^G$ in the norm $\|\cdot\|_{\mathcal A}$. For the related notion of $\eps$-Nash equilibria introduced in Definition~\ref{def:eps-Nash}, we are interested in closeness in terms of the players' utility functionals rather than their controls. This is established in Proposition~\ref{prop:epsilon-Nash} below.

\begin{assumption}\label{assum:U-Lip}
Under Assumption~\ref{assum:Av}, assume there exist constants $L_a,L_z\ge 0$ such that for all $\tilde a,\tilde a',\tilde z,\tilde z'\in\mathcal A_{M_{\raisebox{0ex}{\tiny\eqref{eq:A_M}}}}$
and all $\tilde\theta\in\mathcal A$,
\begin{equation}\label{eq:U-Lip}
\big|U(\tilde a,\tilde z,\tilde\theta)-U(\tilde a',\tilde z',\tilde\theta)\big|
\le L_a\|\tilde a-\tilde a'\|_{\mathcal A} + L_z\|\tilde z-\tilde z'\|_{\mathcal A}.
\end{equation}
\end{assumption}

\begin{proposition}\label{prop:epsilon-Nash}
Suppose Assumptions~\ref{assum:U}, \ref{assum:argmax}, \ref{assum:Av}, and \ref{assum:U-Lip} hold. Fix $k\in\N_0$. For each $v\in G$, let $\bar a^{(v,k)}\in\mathcal A^{B_k(G,v)}$ be the unique Nash equilibrium of the truncated local game $\mathbb{G}^{(v,k)}(\mathcal{A}_{ad}^G,U,\theta,G)$ on $B_k(G,v)$, and define the global action profile $\hat a^{(k)}\in\mathcal A^G$ by
\[
\hat a^{(k)}_v := \bar a^{(v,k)}_v,\quad v\in G.
\]
Then $\hat a^{(k)}$ is an $\varepsilon_k$-Nash equilibrium of the global game $\mathbb{G}(\mathcal{A}_{ad}^G,U,\theta,G)$ with
\begin{equation}\label{eq:epsk}
\varepsilon_k := 2\rho_{\raisebox{0ex}{\tiny\eqref{eq:rho}}}^{k} M_{\raisebox{-0.3ex}{\tiny\eqref{eq:A_M}}}(L_a+2L_z).
\end{equation}
In particular, $\varepsilon_k\to 0$ as $k\to\infty$.
\end{proposition}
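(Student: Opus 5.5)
The approach is to compare, for each fixed player $v\in G$, the utility of $\hat a^{(k)}_v$ against the best response to $z_v(\hat a^{(k)})$. We route everything through the global Nash equilibrium $\bar a$ and use Proposition~\ref{prop:local-approximation} uniformly in $v$.

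Applying that proposition at every vertex gives
\[
\|\hat a^{(k)}_u-\bar a_u\|_{\mathcal A}\le 2\rho^k_{\raisebox{0ex}{\tiny\eqref{eq:rho}}}M_{\raisebox{-0.3ex}{\tiny\eqref{eq:A_M}}}=:\delta_k\quad\text{for all } u\in G.
\]
Hence $\|\hat a^{(k)}-\bar a\|_{\mathcal A^G}\le\delta_k$. Since $\|\boldsymbol G\|_{\text{op}}\le 1$, this yields $\|z_v(\hat a^{(k)})-z_v(\bar a)\|_{\mathcal A}\le\delta_k$. Note also that $\hat a^{(k)}$ is admissible, because $\hat a^{(k)}_v\in\mathcal A_v$ and Assumption~\ref{assum:Av} bounds the norms uniformly.

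By Assumption~\ref{assum:Av}, all actions and aggregates involved lie in $\mathcal A_{M_{\raisebox{0ex}{\tiny\eqref{eq:A_M}}}}$; the aggregate is an average of elements of this ball, which is convex. So Assumption~\ref{assum:U-Lip} applies. Write $\hat z=z_v(\hat a^{(k)})$ and $\bar z=z_v(\bar a)$. For any $\tilde a\in\mathcal A_v$,
\[
U(\tilde a,\hat z,\theta_v)\le U(\tilde a,\bar z,\theta_v)+L_z\delta_k\le U(\bar a_v,\bar z,\theta_v)+L_z\delta_k,
\]
where the second inequality uses the Nash property \eqref{eq:Nash} of $\bar a$. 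Then
\[
U(\bar a_v,\bar z,\theta_v)\le U(\hat a^{(k)}_v,\hat z,\theta_v)+L_a\delta_k+L_z\delta_k .
\]
Taking the supremum over $\tilde a$ gives
\[
\sup_{\tilde a\in\mathcal A_v}U(\tilde a,\hat z,\theta_v)\le U(\hat a^{(k)}_v,\hat z,\theta_v)+\delta_k(L_a+2L_z),
\]
which is exactly \eqref{eq:eps-Nash} with $\varepsilon_k$ from \eqref{eq:epsk}. Since $\rho_{\raisebox{0ex}{\tiny\eqref{eq:rho}}}<1$, we get $\varepsilon_k\to0$.

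The main point to check carefully is the applicability of Assumption~\ref{assum:U-Lip}: it needs the arguments $\tilde a$, $\hat z$ and $\bar z$ to lie in $\mathcal A_{M}$. This holds because $\tilde a$ ranges over $\mathcal A_v\subset\mathcal A_M$ and the aggregates are convex combinations of elements of $\mathcal A_M$, or $0$ for isolated vertices. Everything else reduces to the uniform-in-$v$ bound already provided by Proposition~\ref{prop:local-approximation}.
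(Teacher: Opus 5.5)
Your proof is correct and follows the paper's argument: uniform closeness from Proposition~\ref{prop:local-approximation}, closeness of the aggregates, and the same comparison routed through $\bar a_v$ using the Nash property and Assumption~\ref{assum:U-Lip}. The only differences are minor: the paper fixes a maximizer of $U(\cdot,\hat z,\theta_v)$ where you work with an arbitrary $\tilde a\in\mathcal A_v$ and take the supremum at the end, and you explicitly check that the aggregates lie in $\mathcal A_{M}$, which the paper leaves implicit.
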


The proof of Proposition \ref{prop:epsilon-Nash} is given in Section~\ref{sec:proofs-propositions}.

\begin{remark}
Proposition~\ref{prop:epsilon-Nash} shows that a global approximate Nash equilibrium can be constructed from purely local computations. Indeed, for $k\in\N_0$ and each vertex $v\in G$, one solves the truncated local game on the ball $B_k(G,v)$ and uses the resulting local equilibrium action at $v$ to define a global action profile $\hat a^{(k)}$. Under the assumptions of Proposition~\ref{prop:epsilon-Nash}, this profile is an $\varepsilon_k$-Nash equilibrium of the global game with $\eps_k$ as in \eqref{eq:epsk}. In particular, given any target accuracy $\varepsilon>0$, it suffices to choose
\[
k \;\ge\; \left\lceil \frac{\log\!\big(\varepsilon/(2M_{\raisebox{-0.2ex}{\tiny\eqref{eq:A_M}}}(L_a+2L_z))\big)}{\log\rho_{\raisebox{0ex}{\tiny\eqref{eq:rho}}}}\right\rceil,
\]
to ensure that $\hat a^{(k)}$ is an $\varepsilon$-Nash equilibrium. This highlights the same tradeoff between locality and accuracy as addressed in Remark~\ref{rem:local-approximation-tradeoff}.
\end{remark}

\subsection{Local Reconstruction}\label{subsec:local-reconstruction}
The previous subsection showed that equilibrium actions can be approximated from local information. The next result shows that if the Nash equilibrium is known exactly on the vertex boundary of a subgraph, then it can be reconstructed exactly in the interior by running best-response Picard iterates while keeping the boundary actions fixed.

Let $H\subset G$ be an induced subgraph. Define its boundary and interior by
\[
\partial H \,:=\,\Big\{u\in H:\ \exists\,w\notin H \text{ with } w\sim u\Big\},\quad H^\circ \,:=\, H\setminus \partial H.
\]
Note that if $u\in H^\circ$, then every neighbor of $u$ lies in $H$. Next, given $b=(b_u)_{u\in \partial H}\in \mathcal A^{\partial H}$ and $a=(a_u)_{u\in H^\circ}\in\mathcal A^{H^\circ}$, define the clamped profile $a\oplus b\in \mathcal A^{H}$ by
\[
(a\oplus b)_u :=
\begin{cases}
a_u, & u\in H^\circ,\\
b_u, & u\in \partial H.
\end{cases}
\]
For $b\in \mathcal A^{\partial H}$ define the corresponding clamped local aggregate on the interior by
\be\label{eq:local-aggregate-clamped}
z_u^{H,b}(a)
:= \frac{\mathds{1}_{\{\deg_G(u)>0\}}}{\deg_G(u)}\sum_{w\sim u} (a\oplus b)_w,
\quad a\in\mathcal A^{H^\circ},\ u\in H^\circ.
\ee
Moreover, define the clamped best-response operator $\boldsymbol{\Phi}^{H,b}_\theta:\mathcal A^{H^\circ}\to \mathcal A^{H^\circ}$ by
\be\label{eq:best-response-clamped}
\big(\boldsymbol{\Phi}^{H,b}_\theta(a)\big)(u)
:= \argmax_{\tilde a\in \mathcal A_u} U\big(\tilde a,\, z_u^{H,b}(a),\,\theta_u\big),
\quad a\in\mathcal A^{H^\circ},\ u\in H^\circ.
\ee
Starting from $a^{H,b,(0)}\equiv 0$ on $H^\circ$, define the clamped Picard iterates in $\mathcal A^{H^\circ}$ by 
\[
a^{H,b,(k)} := \boldsymbol{\Phi}^{H,b}_\theta\big(a^{H,b,(k-1)}\big),\quad k\ge 1.
\]

\begin{proposition}\label{prop:local-reconstruction}
Suppose that Assumptions~\ref{assum:U}, \ref{assum:argmax}, and \ref{assum:Av} hold. Let $\bar a\in\mathcal A^G$ be the unique Nash equilibrium of $\mathbb{G}(\mathcal{A}_{ad}^G,U,\theta,G)$. Let $H\subset G$ be an induced subgraph and assume that the boundary actions are known exactly, that is, $b=\bar a_{\partial H}\in\mathcal{A}^{\partial H}$ is given. Then:
\begin{itemize}
\item[\textbf{(i)}] The operator $\boldsymbol{\Phi}^{H,b}_\theta$ is a contraction on $\mathcal A^{H^\circ}$ with contraction constant~$\rho_{\raisebox{0ex}{\tiny\eqref{eq:rho}}}<1$ and hence admits a unique fixed point $a^{H,b,\ast}\in\mathcal A^{H^\circ}$.
\item[\textbf{(ii)}] This fixed point coincides with the restriction of the global Nash equilibrium to the interior, that is, $a^{H,b,\ast}=\bar a_{H^\circ}$. In particular, if $v\in H^\circ$, then $\smash{a^{H,b,\ast}_v=\bar a_v}$.
\item[\textbf{(iii)}] The clamped Picard iterates converge geometrically:
\[
\sup_{u\in H^\circ}\big\|a^{H,b,(k)}_u-\bar a_u\big\|_{\mathcal A}\le \rho_{\raisebox{0ex}{\tiny\eqref{eq:rho}}}^{k} M_{\raisebox{-0.3ex}{\tiny\eqref{eq:A_M}}},\quad k\in\N_0.
\]
\end{itemize}
\end{proposition}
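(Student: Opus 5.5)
The plan is to reuse the contraction mechanism behind Theorem~\ref{thm:NE}. The first ingredient is the standard Lipschitz estimate for the single-player best response. Fix $u$ and write $\tilde a^\ast(\tilde z):=\argmax_{\tilde a\in\mathcal A_u}U(\tilde a,\tilde z,\theta_u)$. This maximiser exists and is unique because of Assumption~\ref{assum:U} (strong concavity, continuity) and Assumption~\ref{assum:argmax} ($\mathcal A_u$ nonempty, closed, convex in a Hilbert space). The first-order variational inequality reads
\[
\langle \nabla_{\tilde a}U(\tilde a^\ast(\tilde z),\tilde z,\theta_u),\tilde a-\tilde a^\ast(\tilde z)\rangle_{\mathcal A}\le 0\quad\text{for all }\tilde a\in\mathcal A_u .
\]
Apply it twice, once for $\tilde z$ and once for $\tilde z'$, and add the two inequalities. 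Strong monotonicity of $-\nabla_{\tilde a}U$ with constant $\gamma_U$, together with the $\ell_U$-Lipschitz dependence of the gradient on $\tilde z$, then gives
\[
\|\tilde a^\ast(\tilde z)-\tilde a^\ast(\tilde z')\|_{\mathcal A}\le \rho\,\|\tilde z-\tilde z'\|_{\mathcal A}.
\]
This is presumably the same estimate used in the proof of Theorem~\ref{thm:NE}, so I would cite it.

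\textbf{(i)} Take $a,a'\in\mathcal A^{H^\circ}$. The boundary part $b$ is common to both clamped profiles, so for $u\in H^\circ$
\[
z^{H,b}_u(a)-z^{H,b}_u(a')=\frac{\mathds 1_{\{\deg_G(u)>0\}}}{\deg_G(u)}\sum_{w\sim u,\,w\in H^\circ}(a_w-a'_w).
\]
Its $\mathcal A$-norm is at most $\|a-a'\|_{\mathcal A^{H^\circ}}$, since the sum has at most $\deg_G(u)$ terms. Combining this with the best-response estimate gives $\|\boldsymbol\Phi^{H,b}_\theta(a)-\boldsymbol\Phi^{H,b}_\theta(a')\|_{\mathcal A^{H^\circ}}\le\rho\|a-a'\|_{\mathcal A^{H^\circ}}$.

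I also need $\boldsymbol\Phi^{H,b}_\theta$ to map into $\mathcal A^{H^\circ}$, i.e.\ its output must have bounded sup-norm. Under Assumption~\ref{assum:Av} this is immediate, since every best response lies in $\mathcal A_u\subset\mathcal A_M$. Progressive measurability is built into $\mathcal A$. Completeness of $\mathcal A^{H^\circ}$ (sup of Hilbert norms) and Banach's fixed-point theorem then give the unique fixed point $a^{H,b,\ast}$.

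\textbf{(ii)} Every neighbour of $u\in H^\circ$ lies in $H$. Hence $\bar a_{H^\circ}\oplus\bar a_{\partial H}=\bar a_H$ on $N_u(G)$, and so $z_u^{H,b}(\bar a_{H^\circ})=z_u(\bar a)$. The Nash condition~\eqref{eq:Nash} then says exactly $\boldsymbol\Phi^{H,b}_\theta(\bar a_{H^\circ})=\bar a_{H^\circ}$. By the uniqueness in (i), $a^{H,b,\ast}=\bar a_{H^\circ}$.

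\textbf{(iii)} Since $\bar a_{H^\circ}$ is the fixed point, induction gives
\[
\|a^{H,b,(k)}-\bar a_{H^\circ}\|_{\mathcal A^{H^\circ}}\le\rho^k\|a^{H,b,(0)}-\bar a_{H^\circ}\|_{\mathcal A^{H^\circ}}=\rho^k\sup_{u\in H^\circ}\|\bar a_u\|_{\mathcal A}\le \rho^k M,
\]
where the last step uses $\bar a_u\in\mathcal A_u\subset\mathcal A_M$.

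The only genuinely delicate step is the best-response Lipschitz bound with constant exactly $\rho=\ell_U/\gamma_U$. Everything else is bookkeeping, and that bound is inherited from the proof of Theorem~\ref{thm:NE}.
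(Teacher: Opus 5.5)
Your proposal is correct and follows essentially the same route as the paper. The clamped aggregate is $1$-Lipschitz in the interior coordinates, and the pointwise best-response sensitivity estimate \eqref{eq:sensitivity}, which your variational-inequality derivation reproduces, gives the contraction with constant $\rho$. The Nash condition together with $N_u(G)\subset H$ for $u\in H^\circ$ identifies $\bar a_{H^\circ}$ as the fixed point, and the geometric bound follows from $a^{H,b,(0)}\equiv 0$ and Assumption~\ref{assum:Av}. You also check explicitly that $\boldsymbol{\Phi}^{H,b}_\theta$ maps into $\mathcal A^{H^\circ}$ via $\mathcal A_u\subset\mathcal A_M$; the paper leaves this implicit, and it does not change the argument.
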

The proof of Proposition~\ref{prop:local-reconstruction} is given in Section~\ref{sec:proofs-propositions}.

\begin{figure}[htb]
\centering
\includegraphics[width=0.6\linewidth]{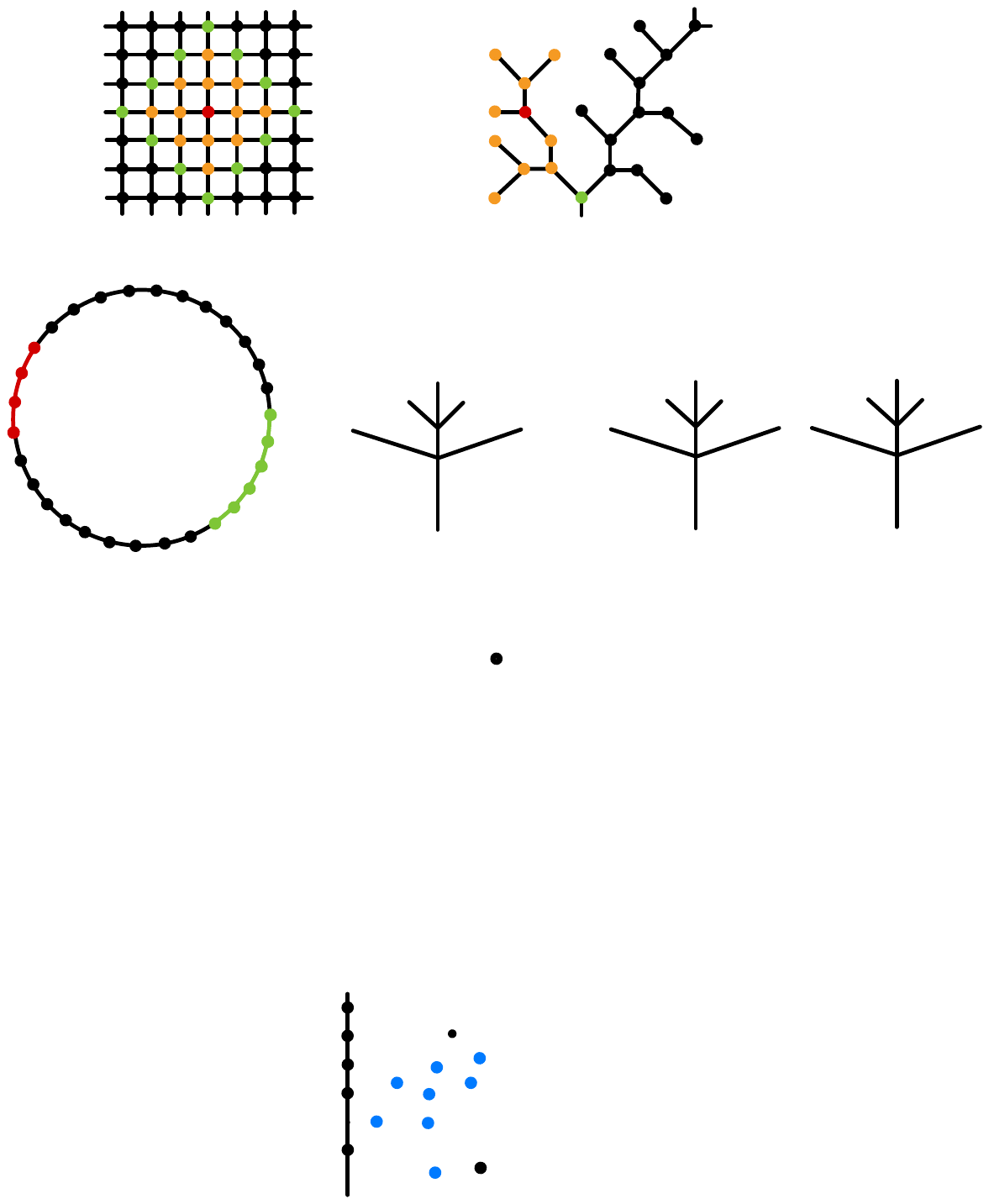}
\caption{Left: the infinite lattice $G\cong\mathbb{Z}^2$ with a specified vertex $v\in G$ (red), the neighborhood $H=B_3(G,v)$ (red/orange/green), its boundary $\partial H$ (green), and its interior $H^\circ$ (red/orange). Right: a tree $G'$ with a specified vertex $v\in G'$ (red), a neighborhood $H'\subset G'$ of $v$ (red/orange/green), its boundary $\partial H'$ (green), and its interior $H'^\circ$ (red/orange).}
\label{fig:local-reconstruction}
\end{figure}

\begin{remark} 
A canonical choice in Proposition~\ref{prop:local-reconstruction} is $H=B_k(G,v)$ for some $k\in\N$. Then $\partial H$ coincides with the outer layer of the ball, i.e.,~vertices in $B_k(G,v)$ that are adjacent to $G\setminus B_k(G,v)$, and it holds that $v\in H^\circ$. Hence, knowing $\bar a$ on this boundary layer allows the exact reconstruction of $\bar a_v$ by solving the clamped fixed point problem on $H^\circ$. The left-hand side of Figure~\ref{fig:local-reconstruction} illustrates this canonical choice in the infinite lattice $G\cong\mathbb{Z}^2$.
\end{remark}

\subsection{Convergence}\label{subsec:convergence}
Next, given a sequence of finite graphs that converges in the local weak sense to a random rooted graph, we study the convergence of the associated Nash equilibria.  The formulation of local weak convergence involves rooted graphs, however,  their root is merely a bookkeeping device and does not endow the corresponding player with any special role in the game. Recall the notion of (marked) local weak convergence from Definition~\ref{def:local-weak-convergence} and Remark~\ref{rem:local-weak-convergence-marked}.

\begin{convention}[Randomness of marked graphs] \label{convention:quenched}
Throughout the rest of this section, we allow both the underlying graphs and their vertex marks to be random. 
We consider two sources of randomness:
\begin{enumerate} 
\item \textbf{Graph randomness.} Let $\P'$ be a probability measure on the space of connected, countable, rooted graphs $\mathcal{G}_\ast$, equipped with the Borel $\sigma$-algebra $\mathcal{B}(\mathcal{G}_\ast)$. A random graph $G \in \mathcal{G}_\ast$ is then distributed according to $\P'$.
\item \textbf{Vertex mark randomness.} Given $G$, each vertex $v \in G$ carries a stochastic process $x^v \in \mathcal A$, where $\mathcal A$ is the Hilbert space defined in \eqref{eq:A}. The family of processes $(x^v)_{v\in G}$ is sampled according to its conditional law given $G$.
\end{enumerate}
The joint law of marked graphs $(G,(x^v)_{v\in G})$ is then denoted by the probability measure $\Q$ on $(\mathcal{G}_\ast[\mathcal A],\mathcal{B}(\mathcal{G}_\ast[\mathcal A]))$, and is obtained by first sampling $G$ according to $\P'$ and then sampling the vertex processes $(x^v)_{v\in G}$ according to their conditional laws given $G$.

Unless stated otherwise, in the following all statements below are understood in the quenched sense:
We fix a realization of the graph under $\P'$ and then the game is analyzed conditional on this realization.  Accordingly, throughout the remainder of this section we treat graphs, the induced games,
and the associated Nash equilibria as random objects with respect to $\P'$, and we do not always indicate this dependence explicitly in the notation for readability. In particular, the terms ``random'' and ``deterministic'' will henceforth refer to randomness with respect to $\P'$.
\end{convention}

\begin{theorem}\label{thm:convergence}
Consider $(G,\theta)\in\mathcal G_\ast[\mathcal{A}]$, that is, a random rooted graph $G\in\mathcal G_\ast$ with marks $(\theta_v)_{v\in G}\in\mathcal{A}^G$. Let $\mathbb{G}(\mathcal{A}_{ad}^G,U,\theta,G)$ be the associated game, and assume that $\mathcal{A}_v=\mathcal{A}_{0}$ for some $\mathcal{A}_{0}\subset \mathcal{A}$ and all~$v\in G$. Suppose that Assumptions~\ref{assum:U}, \ref{assum:argmax}, and \ref{assum:Av} hold, and denote by $\bar a\in\mathcal{A}^G$ the corresponding unique Nash equilibrium. Consider a sequence $\{G_n\}_{n\in\N}\subset\mathcal{G}$ of finite graphs with marks $(\theta^n_v)_{v\in G_n}\in\mathcal{A}^{G_n}$. For $n\in\N$, let $\bar a^n\in\mathcal{A}^{G_n}$ be the  unique Nash equilibrium of the game $\smash{\mathbb{G}(\mathcal{A}_{ad}^{G_n},U,\theta^n,G_n)}$ with $\mathcal{A}_v=\mathcal{A}_{0}$ for all $v\in G_n$. Then, 
\begin{itemize}
    \item[\textbf{(i)}]  If $\{(G_n,\theta^n)\}_{n\in\N}$ converges in distribution in the local weak sense to $(G,\theta)$, then $\{(G_n,\bar a^n)\}_{n\in\N}$ converges in distribution in the local weak sense to $(G,\bar a)$. 
    \item[\textbf{(ii)}] Assume that the Hilbert space $\mathcal{A}$ from \eqref{eq:A} is separable. If $\{(G_n,\theta^n)\}_{n\in\N}$ converges in $\Q$-probability in the local weak sense to $(G,\theta)$ with $|G_n|\to\infty$ in $\Q$-probability, then $\{(G_n,\bar a^n)\}_{n\in\N}$ converges in $\Q$-probability in the local weak sense to $(G,\bar a)$.
\end{itemize}
\end{theorem}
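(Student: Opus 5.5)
The strategy is to view the Nash equilibrium as a local, continuous function of the marked rooted graph, up to an exponentially small error. Then local weak convergence of $(G_n,\theta^n)$ transfers to $(G_n,\bar a^n)$ through a continuous-mapping argument.

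Concretely, for $k\in\N_0$ and a marked rooted graph $(G,o,\theta)$, let $\bar a^{(o,k)}$ denote the Nash equilibrium of the truncated local game on $B_k(G,o)$ from Proposition~\ref{prop:local-approximation}. Since $\mathcal A_v=\mathcal A_0$ for all $v$, this equilibrium depends only on the isomorphism class of the marked ball $B_{k+1}(G,o,\theta)$. We need radius $k+1$ because the normalisation in \eqref{eq:local-aggregate-trunc} uses $\deg_G(u)$. For each radius $m$, define the map $\Psi_k:(G,o,\theta)\mapsto (G,o,a^{k})$, where $a^{k}_u:=\bar a^{(u,k)}_u$ is the local-approximation profile of Proposition~\ref{prop:epsilon-Nash}.

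\textbf{Step 1 (uniform approximation).} By Proposition~\ref{prop:local-approximation},
\[
\sup_u\|\bar a_u-a^{k}_u\|_{\mathcal A}\le 2\rho^kM.
\]
Hence, directly from \eqref{eq:d-ast-marked},
\[
d_\ast\big((G,o,\bar a),\Psi_k(G,o,\theta)\big)\le 2\rho^kM ,
\]
and the same bound holds uniformly over all graphs, including every $G_n$ and every root.

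\textbf{Step 2 (continuity of $\Psi_k$).} Show that $\Psi_k$ is continuous on $\mathcal G_\ast[\mathcal A]$. Suppose $(G_m,\theta^m)\to(G,\theta)$ locally. Then for each $r$ the balls $B_{r+k+1}$ are eventually isomorphic via some $\varphi$, with marks $\varepsilon$-close. The local games on $B_k(G_m,u)$ and $B_k(G,\varphi(u))$ then have identical structure and differ only in $\theta$. Using the Lipschitz dependence of $\nabla_{\tilde a}U$ on $\tilde\theta$ (constant $\ell_\theta$), the best-response operators differ by at most $(\ell_\theta/\gamma_U)\varepsilon$, via the standard strong-concavity estimate for argmax perturbations already used in Theorem~\ref{thm:NE}. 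The contraction with constant $\rho<1$ then gives
\[
\|\bar a^{(u,k)}_u-\bar a^{(\varphi(u),k)}_{\varphi(u)}\|\le \frac{\ell_\theta\varepsilon}{\gamma_U(1-\rho)}.
\]
This yields closeness of the balls $B_r$ of the images.

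\textbf{Step 3 (part (i)).} Take $h\in C_b(\mathcal G_\ast[\mathcal A])$ bounded Lipschitz; bounded Lipschitz functions suffice for weak convergence on metric spaces by the portmanteau theorem. The average of $h(C_v(G_n,\bar a^n))$ differs from the average of $h\circ\Psi_k(C_v(G_n,\theta^n))$ by at most $\mathrm{Lip}(h)\,2\rho^kM$, uniformly in $n$. The same holds on the limit side. Since $h\circ\Psi_k\in C_b$, convergence in distribution gives convergence of the middle terms. Letting $n\to\infty$ and then $k\to\infty$ completes (i). One should also check that $\Psi_k$ commutes with taking connected components, so that $\Psi_k(C_v(G_n,\theta^n))=C_v(G_n,a^{k,n})$; this holds because the local games only see balls.

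\textbf{Step 4 (part (ii)).} Apply the same $3\varepsilon$-argument to the empirical averages themselves: they converge in probability for $h\circ\Psi_k$, and the approximation errors are deterministic and uniform. Separability of $\mathcal A$ is needed so that $\mathcal G_\ast[\mathcal A]$ is Polish; this makes the reduction to a countable convergence-determining class legitimate, and the condition $|G_n|\to\infty$ handles the passage between the definitions of convergence.

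I expect the main obstacle to be Step 2: making the continuity of $\Psi_k$ rigorous on isomorphism classes. This includes controlling the argmax perturbation in $\theta$ for constrained sets $\mathcal A_0$, and handling the case where the isomorphism $\varphi$ is not unique.
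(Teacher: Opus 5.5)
Your argument is correct, but it differs from the paper's in the choice of local approximant and in how part (ii) is handled.

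\textbf{Local approximant.} The paper approximates $\bar a$ by the global Picard iterates $(G,a^{(k)},\theta)=B^k(G,0,\theta)$, where $B$ is the one-step best-response map of Lemma~\ref{lemma:B-continuous}. Continuity is then immediate, being a composition of neighbour averaging with the Lipschitz map $\beta$, and the uniform error is $\rho^kM$. You instead use the truncated-game profile $\bar a^{(u,k)}_u$ of Proposition~\ref{prop:local-approximation}. This costs you two extra ingredients: a fixed-point stability estimate with constant $\ell_\theta/(\gamma_U(1-\rho))$, and radius-$(k+1)$ bookkeeping for the $\deg_G$ normalization. You handle both correctly. The constrained-argmax perturbation you flag as an obstacle is exactly Lemma~\ref{lemma:beta-Lipschitz}. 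Non-uniqueness of $\varphi$ is irrelevant, since local convergence only needs some isomorphism. Because Proposition~\ref{prop:local-approximation} is itself proved via Picard iterates, your route is slightly roundabout. It does, however, exhibit $\bar a$ as a uniform limit of continuous functions of genuinely local games.

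\textbf{Part (i).} Testing against bounded Lipschitz $h$ is equivalent to the paper's use of Lemma~\ref{lemma:aux}.

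\textbf{Part (ii).} The paper goes through the two-independent-roots characterization (Lemma~2.8 of \cite{lacker2023local}), which is where $|G_n|\to\infty$ is used, and then through $\operatorname{BL}_1$ test functions. Your direct route has three steps:
\begin{itemize}
\item a pathwise uniform error $2\,\mathrm{Lip}(h)\rho^kM$ on the empirical averages;
\item convergence in probability for $h\circ\Psi_k\in C_b$;
\item extension from Lipschitz $h$ to all of $C_b$.
\end{itemize}
This is shorter and never uses $|G_n|\to\infty$, so your remark about that hypothesis is superfluous for your proof. You should write out the extension step explicitly. Separability gives a countable convergence-determining family of bounded $d_\ast$-Lipschitz functions. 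Along a diagonal sub-subsequence the averages converge a.s.\ on this whole family, which yields a.s.\ weak convergence of the empirical measures and hence convergence in probability for every $h\in C_b$. This is the only place where separability of $\mathcal A$ enters.
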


The proof of Theorem~\ref{thm:convergence} is given in Section~\ref{sec:proofs-convergence}.

\begin{remark}\label{rem:convergence-a.s.-subsequence}
The conclusion of Theorem~\ref{thm:convergence}(ii) yields that $\{(G_n,\bar a^n)\}_{n\in\N}$ converges in $\Q$-probability in the local weak sense to $(G,\bar a)$. This implies that there exists a subsequence $\{(G_{n_j},\bar a^{n_j})\}_{j\in\N}$ that converges $\Q$-almost surely in the local weak sense to $(G,\bar a)$.
\end{remark}

\begin{remark}\label{rem:convergence}
In Theorem~\ref{thm:convergence}, we usually have in mind deterministic sequences $\{G_n\}_{n\in\N}$ of finite graphs. Note, however, that the limit object in the statement is a random game, since its underlying graph $G$ is random. This randomness arises by Definition~\ref{def:local-weak-convergence} of local weak convergence, where one studies each finite graph from the perspective of a vertex chosen uniformly at random, and convergence means that the corresponding rooted neighborhoods converge in distribution to the random rooted graph $G$. Consequently, the Nash equilibrium $\bar a$ is random as well.
\end{remark}

\begin{definition}\label{def:separable}
Let $(\Omega,\mathcal F,\P)$ be a probability space. A $\sigma$-algebra $\mathcal{G}\subset\mathcal F$ is called separable if there exists a countable family $\mathcal C\subset\mathcal G$ such that for every $A\in\mathcal G$ and every $\eps>0$ there is $B\in\mathcal C$ with $\P(A\triangle B)<\varepsilon$. In particular, $\mathcal G$ is separable whenever it is countably generated up to $\P$-null sets, i.e.,~whenever there exists a countable family $\mathcal C'\subset\mathcal G$ such that for every $A\in\mathcal G$ there exists $B'\in\sigma(\mathcal C')$ with $\P(A\triangle B')=0$ (see \cite{bruckner1997real}, Example~9.12).
\end{definition}

\begin{remark}\label{rem:separability-A}
The assumption of Theorem~\ref{thm:convergence}(ii) requires the Hilbert space $\mathcal A$ in \eqref{eq:A} to be separable. By Lemma~\ref{lemma:separable-A} below, $\mathcal A$ is separable whenever the filtration $\F=(\mathcal{F}_t)_{0\leq t\leq T}$ is left-continuous and each $\mathcal F_t$ is separable in the sense of Definition~\ref{def:separable}. In particular, this is the case whenever each $\mathcal F_t$ is countably generated up to $\P$-null sets. These statements carry over to the discrete-time case (see Remark~\ref{rem:discrete-time}), that is, $\mathcal A$ is separable whenever each $\sigma$-algebra $\mathcal F_{t_j}$ in \eqref{eq:A-discrete-time} is separable. 
\end{remark}

Theorem~\ref{thm:convergence} assumes that a graph sequence converging in the local weak sense is specified in advance. An alternative is to start from a random rooted graph and construct such a locally weakly convergent graph sequence from it. Recall the Definitions~\ref{def:MTP}, \ref{def:percolation}, and \ref{def:hyperfinite} of hyperfinite unimodular random graphs.

\begin{corollary}\label{cor:convergence}
Consider a hyperfinite unimodular random graph $(G,\theta)=(G,o,\theta)\in\mathcal G_\ast[\mathcal{A}]$ with finitary exhaustion $(H_{\xi_n})_{n\in\N}$ and marks $(\theta_v)_{v\in G}\in\mathcal{A}^G$. Let $\mathbb{G}(\mathcal{A}_{ad}^G,U,\theta,G)$ be the associated game, and assume that $\mathcal{A}_v=\mathcal{A}_{0}$ for some $\mathcal{A}_{0}\subset \mathcal{A}$ and all~$v\in G$. Suppose that Assumptions~\ref{assum:U}, \ref{assum:argmax}, and \ref{assum:Av} hold, and denote by $\bar a\in\mathcal{A}^G$ the corresponding unique Nash equilibrium. For $n\in\N$, define the sequence $\{G_n\}_{n\in\N}\subset\mathcal{G}$ of finite graphs given by
\[
G_n:=[[C_o(H_{\xi_n})]],\quad n\in\N,
\]
with marks $(\theta^n_v)_{v\in G_n}\in\mathcal{A}^{G_n}$ given by the restrictions $\theta^n:=\theta_{G_n}$ for $n\in\N$. Let $\bar a^n\in\mathcal{A}^{G_n}$ be the unique Nash equilibrium of the associated game $\smash{\mathbb{G}(\mathcal{A}_{ad}^{G_n},U,\theta^n,G_n)}$ with $\mathcal{A}_v=\mathcal{A}_{0}$ for all $v\in G_n$. Then, $\{(G_n,\bar a^n)\}_{n\in\N}$ converges in distribution in the local weak sense to $(G,\bar a)$. 
\end{corollary}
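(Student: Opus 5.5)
The plan is to reduce Corollary~\ref{cor:convergence} to Theorem~\ref{thm:convergence}(i). Everything hinges on showing that the marked finite graphs $\{(G_n,\theta^n)\}_{n\in\N}$ converge in distribution in the local weak sense to $(G,\theta)$. The other hypotheses of Theorem~\ref{thm:convergence} are already part of the corollary's assumptions: a common action set $\mathcal A_0$, and Assumptions~\ref{assum:U}, \ref{assum:argmax}, and \ref{assum:Av}. These hypotheses are stated uniformly over vertices, so they carry over to the finite games on $G_n$. Indeed, Assumption~\ref{assum:argmax} is automatic for finite graphs, and Assumption~\ref{assum:Av} only concerns $\mathcal A_0$. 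Theorem~\ref{thm:NE} then yields the unique equilibria $\bar a^n$ and $\bar a$.

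\textbf{Step 1: the marked exhaustion.} Regard $(G,o,\theta)$ as a hyperfinite unimodular random $\mathcal A$-marked graph. The exhaustion $(H_{\xi_n})_{n\in\N}$ is a percolation on the marked graph in the sense of the vertex-marked version of Definition~\ref{def:percolation}. Here I would take the percolation to be jointly unimodular with the marks $\theta$, which is what the hypothesis of the corollary is meant to provide. Remark~\ref{rem:sample-marked} then says that the finite rooted marked graphs $(H_n,\theta_{H_n})$ with $H_n=C_o(H_{\xi_n})$ converge in distribution to $(G,\theta)$ in $\mathcal G_\ast[\mathcal A]$. The underlying reason is that $H_n\uparrow G$ almost surely, so every ball $B_k$ with its marks is eventually matched exactly.

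\textbf{Step 2: from rooted to unrooted.} Lemma~3.1 of \cite{angel2018hyperbolic}, as quoted before Corollary~\ref{cor:sample-unrooted}, says that conditionally on $H_n$ the root $o$ is uniform on $H_n$. Fix $h\in C_b(\mathcal G_\ast[\mathcal A])$. Since $H_n$ is connected, $C_v(G_n,\theta^n)$ is simply $H_n$ re-rooted at $v$ with marks $\theta^n=\theta_{G_n}$. Uniformity of the root gives
\[
\E_\Q\Big[\tfrac1{|G_n|}\textstyle\sum_{v\in G_n}h\big(C_v(G_n,\theta^n)\big)\Big]=\E_\Q\big[h(H_n,o,\theta_{H_n})\big],
\]
and by Step~1 the right-hand side converges to $\E_\Q[h(G,\theta)]$. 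This is exactly convergence in distribution in the local weak sense of $(G_n,\theta^n)$ to $(G,\theta)$, as in Remark~\ref{rem:local-weak-convergence-marked}.

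\textbf{Step 3: conclude.} Applying Theorem~\ref{thm:convergence}(i) gives that $(G_n,\bar a^n)$ converges in distribution in the local weak sense to $(G,\bar a)$.

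The main obstacle is Step~1, specifically justifying the marked version of the uniform-root property. One must check that unimodularity of the percolated graph survives after adding the $\theta$-marks, so that the mass-transport principle can be applied to the jointly marked graph. I would address this by the standard mass-transport argument: take $F(G,u,v,\theta,\xi)=\mathds 1_{\{v\in C_u(H_\xi)\}}\,g(C_u(H_\xi),v)/|C_u(H_\xi)|$, which uses the finiteness of the components of $H_\xi$. Unimodularity of the marked percolation is the working assumption here, since Remark~\ref{rem:sample-marked} already asserts both conclusions in the marked setting. A second subtlety is that Theorem~\ref{thm:convergence} is phrased quenched with respect to $\P'$. Because the game on each fixed realization depends only on the realized graph and marks, the reduction goes through realization by realization, and distributional statements are then taken under $\Q$.
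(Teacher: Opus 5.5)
Your proposal is correct and matches the paper's proof, which obtains local weak convergence in distribution of $(G_n,\theta^n)$ to $(G,\theta)$ by citing Corollary~\ref{cor:sample-unrooted} and Remark~\ref{rem:sample-marked}, and then applies Theorem~\ref{thm:convergence}(i). Your Steps~1--2 simply unpack Remark~\ref{rem:sample-marked}: the almost sure rooted convergence comes from the exhaustion, and the uniform-root identity is proved via the mass-transport argument behind Lemma~3.1 of \cite{angel2018hyperbolic}, which makes the argument slightly more self-contained.
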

The proof of Corollary~\ref{cor:convergence} is given in Section~\ref{sec:proofs-convergence}.

\section{Proofs of the Results in Section \ref{subsec:existence-uniqueness}}\label{sec:proofs-existence-uniqueness}
This section is dedicated to the proof of Theorem~\ref{thm:NE}. Throughout the rest of the paper, we will omit the subscript equation references in the constants $\rho_{\raisebox{0ex}{\tiny\eqref{eq:rho}}}$ from Assumption~\ref{assum:U} and $M_{\raisebox{-0.3ex}{\tiny\eqref{eq:A_M}}}$ from Assumption~\ref{assum:Av} for better readability.

In order to prove Theorem~\ref{thm:NE}, we proceed as in \cite{neuman2025stochastic} and first introduce an auxiliary operator. Namely, recalling \eqref{eq:AG}, given a heterogeneity profile $\theta\in\mathcal{A}^G$, define the best-response operator $\boldsymbol{B}_\theta$ with domain $\mathcal{A}^G$ by
\be \label{eq:B_theta}
(\boldsymbol{B}_\theta z)(v):=\argmax_{\tilde{a}\in \mathcal{A}_v} U\big(\tilde{a}, z_v,\theta_v\big), \quad z\in\mathcal{A}^G,
\ee
which assigns to any fixed local aggregate $z_v$ (not necessarily of the form $z_v(a)$) the best response of player $v\in G$. The argmax in \eqref{eq:B_theta} exists and is unique due to Assumption~\ref{assum:U}. We first prove a result which shows that the image of $\boldsymbol{B}_\theta$ is contained in $\mathcal{A}^G$, turning it into a well-defined operator from $\mathcal{A}^G$ to $\mathcal{A}^G$.
\begin{lemma}\label{lemma:B_theta}
Under Assumptions~\ref{assum:U} and \ref{assum:argmax}, the best-response operator $\boldsymbol{B}_\theta$ satisfies the following:
\begin{itemize}
\item[\textbf{(i)}] $\boldsymbol{B}_\theta$ is jointly Lipschitz continuous, that is,
$$
\left\|\boldsymbol{B}_{\theta}z-\boldsymbol{B}_{\theta'}z'\right\|_{\mathcal{A}^G}\leq\frac{1}{\gamma_U}\big(\ell_U\|z-z'\|_{\mathcal{A}^G}+\ell_\theta\|\theta-\theta'\|_{\mathcal{A}^G}\big),
$$
for all $z,z'\in\mathcal{A}^G$ and $\theta,\theta'\in\mathcal{A}^G$. 
\item[\textbf{(ii)}]  The image of $\boldsymbol{B}_\theta$ is contained in $\mathcal{A}^G$, that is, $\boldsymbol{B}_\theta(\mathcal{A}^G)\subset\mathcal{A}^G$.  
\item[\textbf{(iii)}]  If additionally Assumption~\ref{assum:Av} holds, the image of $\boldsymbol{B}_\theta$ is contained in 
$$\mathcal{A}^G_M:=\left\{a\in\mathcal{A}^G\Big| \|a\|_{\mathcal{A}^G}\leq M\right\}.$$
\end{itemize}
\end{lemma}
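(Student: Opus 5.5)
The plan is to work player by player, since all three claims are pointwise in $v$ followed by a supremum over $v\in G$. Fix $v$ and inputs $(z_v,\theta_v)$, $(z'_v,\theta'_v)$. Set $a:=(\boldsymbol{B}_\theta z)(v)$ and $a':=(\boldsymbol{B}_{\theta'}z')(v)$. By Assumption~\ref{assum:U}, the map $-U(\cdot,\tilde z,\tilde\theta)$ is $\gamma_U$-strongly convex and continuously Gâteaux differentiable. Since $\mathcal A_v$ is nonempty, closed and convex (Assumption~\ref{assum:argmax}), a unique maximizer exists, and it is characterized by the variational inequality
\[
\langle \nabla_{\tilde a}U(a,z_v,\theta_v),\, b-a\rangle_{\mathcal A}\le 0\quad\text{for all } b\in\mathcal A_v .
\]
The analogous inequality holds for $a'$ with $(z'_v,\theta'_v)$.

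For (i), I would test the first inequality with $b=a'$ and the second with $b=a$, then add them. After inserting and subtracting $\nabla_{\tilde a}U(a',z_v,\theta_v)$, this gives
\[
\langle \nabla U(a,z_v,\theta_v)-\nabla U(a',z_v,\theta_v),\,a'-a\rangle \le \langle \nabla U(a',z'_v,\theta'_v)-\nabla U(a',z_v,\theta_v),\,a'-a\rangle .
\]
Strong concavity gives strong monotonicity of $-\nabla U$, so the left side is at least $\gamma_U\|a-a'\|_{\mathcal A}^2$. On the right side, Cauchy–Schwarz together with the Lipschitz continuity of $\nabla_{\tilde a}U$ in $(\tilde z,\tilde\theta)$ yields the bound $(\ell_U\|z_v-z'_v\|_{\mathcal A}+\ell_\theta\|\theta_v-\theta'_v\|_{\mathcal A})\|a-a'\|_{\mathcal A}$. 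Dividing by $\gamma_U\|a-a'\|_{\mathcal A}$ (the case $a=a'$ is trivial) and taking the supremum over $v$ gives (i). Here $\|z-z'\|_{\mathcal A^G}$ and $\|\theta-\theta'\|_{\mathcal A^G}$ are finite because all profiles lie in $\mathcal A^G$.

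For (ii), the task is to show $\sup_v\|(\boldsymbol B_\theta z)(v)\|_{\mathcal A}<\infty$. I would compare with the reference profile $(\tilde z^0,\tilde\theta^0)$ from Assumption~\ref{assum:argmax}, viewed as constant profiles $z^0\equiv\tilde z^0$ and $\theta^0\equiv\tilde\theta^0$, which lie in $\mathcal A^G$. The pointwise estimate from (i) gives
\[
\|(\boldsymbol B_\theta z)(v)\|_{\mathcal A}\le \|(\boldsymbol B_{\theta^0}z^0)(v)\|_{\mathcal A}+\gamma_U^{-1}\big(\ell_U(\|z\|_{\mathcal A^G}+\|\tilde z^0\|_{\mathcal A})+\ell_\theta(\|\theta\|_{\mathcal A^G}+\|\tilde\theta^0\|_{\mathcal A})\big).
\]
The first term is bounded uniformly in $v$ by Assumption~\ref{assum:argmax}. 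Progressive measurability is automatic because the argmax lies in $\mathcal A_v\subset\mathcal A$.

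For (iii), the claim is immediate: $(\boldsymbol B_\theta z)(v)\in\mathcal A_v\subset\mathcal A_M$ for every $v$, so $\|\boldsymbol B_\theta z\|_{\mathcal A^G}\le M$. The only delicate step in the whole argument is justifying the variational-inequality characterization of the constrained maximizer in infinite dimensions. This follows from the concavity and Gâteaux differentiability of $U$ via the one-sided directional derivative along $a+\epsilon(b-a)$, using convexity of $\mathcal A_v$.
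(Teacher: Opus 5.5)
Your proof is correct and matches the paper's: (ii) compares against the constant reference profiles $\tilde z^0,\tilde\theta^0$ from Assumption~\ref{assum:argmax} via the Lipschitz bound, and (iii) follows directly from $\mathcal A_v\subset\mathcal A_M$. The only difference is in (i), where you derive the pointwise sensitivity estimate yourself from the variational inequality and strong monotonicity; the paper instead cites Lemma~5.2(i) of \cite{neuman2025stochastic}, which rests on this same argument.
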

\begin{proof}
(i) Recall Assumptions~\ref{assum:U} and \ref{assum:argmax} with the constants introduced therein and \eqref{eq:B_theta}. Let $z,z',\theta,\theta'\in\mathcal{A}^G$. By the same reasoning as in the proof of Lemma 5.2(i) from \cite{neuman2025stochastic}, we obtain through the use of a sensitivity result for variational inequalities that
\be\label{eq:sensitivity}
\|(\boldsymbol{B}_{\theta}z)(v)-(\boldsymbol{B}_{\theta'}z')(v)\|_\mathcal{A}
\leq \frac{1}{\gamma_U}\big(\ell_U\|z_v-z'_v\|_\mathcal{A}+\ell_\theta\|\theta_v-\theta'_v \|_\mathcal{A} \big),\quad \text{for all } v\in G.
\ee
Thus, applying $\sup_{v\in G}$ to both sides of inequality \eqref{eq:sensitivity} and using \eqref{eq:AG} and the triangle inequality completes the proof of (i).

(ii) Let $\tilde z^0, \tilde\theta^0 \in\mathcal{A}$ denote the processes from Assumption~\ref{assum:argmax}. Consider the aggregate $z^0\in\mathcal{A}^G$ defined by $z^0_v:=\tilde z^0$ for all $v\in G$ and the heterogeneity profile $\theta^0\in\mathcal{A}^G$ defined by $\theta^0_v:=\tilde\theta^0$ for all $v\in G$. Then,
$$
\|\boldsymbol{B}_{\theta^0} z^0\|_{\mathcal{A}^G}=\sup_{v\in G} \big\| \argmax_{\tilde{a}\in \mathcal{A}_v} U(\tilde{a}, z^0_v,\theta^0_v)\big\|_\mathcal{A}<\infty.
$$
Now let $z,\theta\in\mathcal{A}^G$. Then it follows from (i) and the triangle inequality,
\be\begin{aligned}
\|\boldsymbol{B}_\theta z\|_{\mathcal{A}^G}&\leq\|\boldsymbol{B}_\theta z-\boldsymbol{B}_{\theta^0} z^0\|_{\mathcal{A}^G}+\|\boldsymbol{B}_{\theta^0} z^0\|_{\mathcal{A}^G}\\
&\leq \frac{\ell_U}{\gamma_U}\| z- z^0\|_{\mathcal{A}^G}+\frac{\ell_\theta}{\gamma_U}\| \theta- \theta^0\|_{\mathcal{A}^G}+\|\boldsymbol{B}_{\theta^0} z^0\|_{\mathcal{A}^G}\\
&\leq \frac{\ell_U}{\gamma_U}\big(\| z\|_{\mathcal{A}^G}+\| z^0\|_{\mathcal{A}^G}\big)+\frac{\ell_\theta}{\gamma_U}\big(\| \theta\|_{\mathcal{A}^G}+\| \theta^0\|_{\mathcal{A}^G}\big)+\|\boldsymbol{B}_{\theta^0} z^0\|_{\mathcal{A}^G}\\
&<\infty,
\end{aligned}\ee
completing the proof of (ii).

(iii) By Assumption~\ref{assum:Av}, there exists  $M>0$ such that $\mathcal{A}_v \subset \mathcal{A}_M$ for all $v\in G$. Let $z\in\mathcal{A}^G$.  Then, by \eqref{eq:B_theta}, it holds that $(\boldsymbol{B}_\theta z)(v)\in\mathcal{A}_M$ for all $v\in G$, and therefore that
$$
\|\boldsymbol{B}_\theta z\|_{\mathcal{A}^G}=\sup_{v\in G}\|(\boldsymbol{B}_\theta z)(v)\|_\mathcal{A}\leq M,
$$
completing the proof of (iii). This concludes the proof of the lemma. 
\end{proof}

We are now set to prove Theorem~\ref{thm:NE}.

\begin{proof}[Proof of Theorem~\ref{thm:NE}] Suppose that Assumptions~\ref{assum:U} and \ref{assum:argmax} hold. For $a\in\mathcal{A}^G$, let $z(a):=(z_v(a))_{v\in G}$ denote the local aggregate profile. Recalling \eqref{eq:AG}, \eqref{eq:local-aggregate}, \eqref{eq:B_theta}, and Definition~\ref{def:Nash}, an action profile $\bar a\in\mathcal A^G$ is a Nash equilibrium of the game $\mathbb{G}(\mathcal{A}_{ad}^G,U,\theta, G)$ if and only if
$$
\bar a_v = \argmax_{\tilde a\in\mathcal A_v} U\big(\tilde a, z_v(\bar a),\theta_v\big)
= (\boldsymbol B_\theta z(\bar a))(v)
= (\boldsymbol B_\theta \boldsymbol G \bar a)(v),\quad \text{for all }v\in G,
$$
that is, if and only if $\bar a = \boldsymbol B_\theta \boldsymbol G \bar a$.
Thus, we need to show that $\boldsymbol B_\theta\boldsymbol G$ has a unique fixed point in $\mathcal A^G$.
By Lemma~\ref{lemma:B_theta}(i), $\boldsymbol B_\theta$ is Lipschitz in its first argument with constant
$\rho=\ell_U/\gamma_U$. Since $\boldsymbol G$ is linear and bounded on $\mathcal{A}^G$ with operator norm $\|\boldsymbol G\|_{\operatorname{op}}\leq 1$ (see Remark~\ref{rem:local-aggregate-operator}), we have for any
$a,a'\in\mathcal A^G$,
\be\label{eq:contraction}
\begin{aligned}
\|\boldsymbol B_\theta\boldsymbol G a
   - \boldsymbol B_\theta\boldsymbol G a'\|_{\mathcal A^G}
&\le \rho
     \|\boldsymbol G a -\boldsymbol G a'\|_{\mathcal A^G}  \\
&\le \rho\,\|\boldsymbol G\|_{\operatorname{op}}\,
       \|a-a'\|_{\mathcal A^G}\\
&\le \rho\,
       \|a-a'\|_{\mathcal A^G}.
\end{aligned}
\ee
Since $\rho<1$ by Assumption~\ref{assum:U}, this shows that
$\boldsymbol B_\theta\boldsymbol G$ is a contraction on
$\mathcal A^G$. By Banach’s fixed point theorem (see \cite{bauschke2017}, Chapter~1.12, Theorem~1.50), it admits a unique fixed point
$\bar a\in\mathcal A^G$, which by construction lies in $\mathcal A_{ad}^G$ and is therefore the unique Nash equilibrium.
\end{proof}

\section{Proofs of the Results in Sections \ref{subsec:correlation-decay}--\ref{subsec:local-reconstruction}}\label{sec:proofs-propositions}
This section focuses on the proofs of Propositions~\ref{prop:correlation-decay}, \ref{prop:local-approximation}, \ref{prop:epsilon-Nash}, and \ref{prop:local-reconstruction}.

\begin{proof}[Proof of Proposition~\ref{prop:correlation-decay}]
Recalling the definition of the local aggregate operator $\boldsymbol{G}$ from \eqref{eq:local-aggregate} and the best-response operator $\boldsymbol B_\theta$ from \eqref{eq:B_theta}, define the sequence of Picard iterates $\{a^{(k)}\}_{k\in\N_0}\subset\mathcal A^G$ by $a^{(0)}\equiv 0$ and
\be\label{eq:PI}
a^{(k)} := \boldsymbol B_\theta \boldsymbol G a^{(k-1)},\quad k\ge 1.
\ee
Recall the notation for subgraphs introduced in Section \ref{subsec:graphs} and before Proposition~\ref{prop:correlation-decay}. By assumption, the processes $(\theta_v)_{v\in G}$ are independent under $\P$ and $\smash{\argmax_{\tilde{a}\in \mathcal{A}_v} U(\tilde{a}, \tilde z,\theta_v)}$ is $\sigma(\tilde z,\theta_v)$-measurable for all $\tilde z\in\mathcal{A}$ and $v\in G$. For $i=1,2$, let $H_i\subset G$ be finite induced subgraphs and let $f_i:\mathcal L^{H_i}\to\R$ be bounded and Lipschitz with respect to $d_{H_i}$ (see \eqref{eq:dH}). Then, it follows inductively from the definition of the iterates \eqref{eq:PI} that, for every $k\geq 1$, the family of strategies $\smash{a^{(k)}_{H_i}}=\smash{(a^{(k)}_v)_{v\in {H_i}}}$ is measurable with respect to $\sigma(\theta_u:\ u\in B_{k-1}(H_i))$ and independent of $\sigma(\theta_u:\ u\notin B_{k-1}(H_i))$. Consequently, if $2k-2< d_G(H_1,H_2)$, then $B_{k-1}(H_1)\cap B_{k-1}(H_2)=\emptyset$, and it follows that $\smash{a^{(k)}_{H_1}}$ and $\smash{a^{(k)}_{H_2}}$ are independent, hence
\be\label{eq:Cov=0}
\mathrm{Cov}_{\P}\big(f_1(a^{(k)}_{H_1}),\,f_2(a^{(k)}_{H_2})\big)=0.
\ee
Next, recalling \eqref{eq:contraction}, it holds that
\[
\|\boldsymbol B_\theta(\boldsymbol G a)-\boldsymbol B_\theta(\boldsymbol G a')\|_{\mathcal A^G}
\le \rho\,\|a-a'\|_{\mathcal A^G},
\]
hence $\boldsymbol B_\theta\boldsymbol G$ is a contraction on $\mathcal{A}^G$ with constant $\rho<1$. Since $\bar a$ is its unique fixed point,
\be\begin{aligned}
\|a^{(k)}-\bar a\|_{\mathcal A^G}
&=\|(\boldsymbol B_\theta\boldsymbol G)^k a^{(0)}-(\boldsymbol B_\theta\boldsymbol G)^k \bar a\|_{\mathcal A^G}\\
&\le \rho^k\,\|a^{(0)}-\bar a\|_{\mathcal A^G},
\end{aligned}\ee
where $(\boldsymbol B_\theta\boldsymbol G)^k$ denotes the $k$-fold composition of the operator $\boldsymbol B_\theta\boldsymbol G$. By Assumption~\ref{assum:Av} and the fact that $a^{(0)}\equiv 0$ it follows that
$\|a^{(0)}-\bar a\|_{\mathcal A^G}\le M$. Therefore,
\begin{equation}\label{eq:geometric-bound-corr}
\sup_{v\in G}\|a^{(k)}_v-\bar a_v\|_{\mathcal A}
\le M\rho^k,\quad k\in\N_0.
\end{equation}
In particular, recalling \eqref{eq:dH}, by the linearity of expectation, the Cauchy-Schwarz inequality, and \eqref{eq:geometric-bound-corr}, it holds for any finite induced subgraph $H\subset G$,
\be\begin{aligned}\label{eq:dH-bound}
\E_{\P}\big[d_H(a^{(k)}_H,\bar a_H)\big]&\leq\E_{\P}\Big[\sum_{v\in H}\|a^{(k)}_v-\bar a_v\|_{L^2}\Big]\\
&=\sum_{v\in H}\E_{\P}\big[\|a^{(k)}_v-\bar a_v\|_{L^2}\big]\\
&\leq\sum_{v\in H}\|a^{(k)}_v-\bar a_v\|_{\mathcal A}\\
&\leq |H|\big(\sup_{v\in H}\|a^{(k)}_v-\bar a_v\|_{\mathcal A}\big)\\
&\le |H|M\rho^k.
\end{aligned}\ee
Now, fix finite induced subgraphs $H_1,H_2\subset G$ and choose $k:=\lceil d_G(H_1,H_2)/2\rceil$, so that $2k-2< d_G(H_1,H_2)$.
Set $\smash{\Delta_i:=f_i(\bar a_{H_i})-f_i(a^{(k)}_{H_i})}$ for $i=1,2$. Then, by the linearity of covariance and \eqref{eq:Cov=0},
\be\begin{aligned}\label{eq:Cov}
\mathrm{Cov}_{\P}\big(f_1(\bar a_{H_1}),f_2(\bar a_{H_2})\big)
&= \mathrm{Cov}_{\P}\big(\Delta_1,f_2(\bar a_{H_2})\big)+\mathrm{Cov}_{\P}\big(f_1(a^{(k)}_{H_1}),f_2(\bar a_{H_2})\big)\\
&=\mathrm{Cov}_{\P}\big(\Delta_1,f_2(\bar a_{H_2})\big)+\mathrm{Cov}_{\P}\big(f_1(a^{(k)}_{H_1}),\Delta_2\big).
\end{aligned}\ee
We thus obtain from \eqref{eq:Cov} that
\be\label{eq:Cov-ineq1}
\Big|\mathrm{Cov}_{\P}\big(f_1(\bar a_{H_1}),f_2(\bar a_{H_2})\big)\Big|
\le
2\|f_2\|_\infty\,\E_\P[|\Delta_1|]
+2\|f_1\|_\infty\,\E_\P[|\Delta_2|].
\ee
By the Lipschitz continuity of the $f_i$ and \eqref{eq:dH-bound}, it holds that
\be\label{eq:Cov-ineq2}
\E_\P[|\Delta_i|]
\le \E_\P\big[\mathrm{Lip}(f_i)\, d_{H_i}(\bar a_{H_i},a^{(k)}_{H_i})\big]
\le M\rho^k\,|H_i|\,\mathrm{Lip}(f_i),
\quad i=1,2.
\ee
Finally, combining \eqref{eq:Cov-ineq1} and \eqref{eq:Cov-ineq2} and recalling \eqref{eq:BL-norm} yields
\be\begin{aligned}
\Big|\mathrm{Cov}_{\P}\big(f_1(\bar a_{H_1}),f_2(\bar a_{H_2})\big)\Big|
&\le 2M\,\rho^k\Big(\|f_2\|_\infty\,\mathrm{Lip}(f_1)|H_1| +\|f_1\|_\infty\,\mathrm{Lip}(f_2)|H_2|\Big)\\
&\le 2M\,\rho^k\,(|H_1|+|H_2|)\|f_1\|_{\text{BL}}\,\|f_2\|_{\text{BL}},
\end{aligned}\ee
as claimed.
\end{proof}

\begin{proof}[Proof of Proposition~\ref{prop:local-approximation}] Fix $v\in G$ and $k\in\N_0$.
Analogously to \eqref{eq:B_theta}, define the truncated best-response operator
$\smash{\boldsymbol B^{(v,k)}_\theta}$ on $\smash{\mathcal A^{B_k(G,v)}}$ by
\be
(\boldsymbol B^{(v,k)}_\theta z)(u):=\argmax_{\tilde a\in \mathcal A_u} U(\tilde a,z_u,\theta_u),
\quad z=(z_u)_{u\in B_k(G,v)}\in \mathcal A^{B_k(G,v)},\ u\in B_k(G,v).
\ee
Then, recalling \eqref{eq:local-aggregate-trunc}, the unique Nash equilibrium $\bar a^{(v,k)}\in \mathcal A^{B_k(G,v)}$ of the truncated local game $\mathbb{G}^{(v,k)}(\mathcal{A}_{ad}^G,U,\theta,G)$ on $B_k(G,v)$ introduced before Proposition~\ref{prop:local-approximation} is given by the unique fixed point
\be\label{eq:NE-local}
\bar a^{(v,k)}=\boldsymbol B^{(v,k)}_\theta \boldsymbol G^{(v,k)}\bar a^{(v,k)}.
\ee
By \eqref{eq:contraction}, $\boldsymbol B_\theta\boldsymbol G$ is a contraction on $\mathcal A^G$ with constant $\rho<1$. An analogous argument using the fact that $\|\boldsymbol G^{(v,k)}\|_{\operatorname{op}}\le 1$ shows that $\smash{\boldsymbol B^{(v,k)}_\theta\boldsymbol G^{(v,k)}}$ is a contraction on $\mathcal A^{B_k(G,v)}$ with constant $\rho<1$ as well. Recall the definition of the global Picard iterates $\{a^{(m)}\}_{m\in\N_0}$ from \eqref{eq:PI}. Similarly, define the local Picard iterates on $B_k(G,v)$ by $a^{(v,k),(0)}\equiv 0$ and
\[
a^{(v,k),(m)}:=\boldsymbol B^{(v,k)}_\theta\boldsymbol G^{(v,k)}a^{(v,k),(m-1)},\quad m\ge 1.
\]
Then it holds for every $m\leq k$ and every $u\in B_{k-m}(G,v)$ that the associated global and local iterate coincide, that is,
\be
a^{(m)}_u=a^{(v,k),(m)}_u.
\ee
Namely, for $m=0$ the claim is immediate since both iterates are equal to $0$, and since the local iterate $\smash{a^{(m)}_u}$ at each vertex $u$ only depends on the local iterates $\smash{a^{(m-1)}_w}$ with $w\in B_1(G,u)$, the claim follows inductively for all $m\leq k$.
In particular, taking $u=v$ yields
\be\label{eq:iterates-agree}
a^{(m)}_v=a^{(v,k),(m)}_v,\quad \text{for all }m\le k.
\ee
Next, as $\bar a$ and $\bar a^{(v,k)}$ are the unique fixed points of
$\boldsymbol B_\theta\boldsymbol G$ and $\boldsymbol B^{(v,k)}_\theta\boldsymbol G^{(v,k)}$,
respectively, the contraction property yields as in \eqref{eq:geometric-bound-corr} that
\be\label{eq:global-local-bounds}
\|a^{(k)}_v-\bar a_v\|_{\mathcal A}\le M\rho^k,
\quad
\|a^{(v,k),(k)}_v-\bar a^{(v,k)}_v\|_{\mathcal A}\le M\rho^k.
\ee
Finally, combining \eqref{eq:iterates-agree} and \eqref{eq:global-local-bounds} and applying the triangle inequality yields
\be\begin{aligned}
\|\bar a_v-\bar a^{(v,k)}_v\|_{\mathcal A}
&\le \|\bar a_v-a^{(k)}_v\|_{\mathcal A}
   +\|a^{(v,k),(k)}_v-\bar a^{(v,k)}_v\|_{\mathcal A}\\
&\le 2M\rho^k,
\end{aligned}\ee
completing the proof.
\end{proof}

\begin{proof}[Proof of Proposition~\ref{prop:epsilon-Nash}] Recall Definition \ref{def:eps-Nash} of $\eps$-Nash equilibria. Let $\bar a\in\mathcal A^G$ be the unique Nash equilibrium of the global game $\mathbb{G}(\mathcal{A}_{ad}^G,U,\theta,G)$. Fix $k\in\N_0$. For each $v\in G$, let $\bar a^{(v,k)}\in\mathcal A^{B_k(G,v)}$ be the unique Nash equilibrium of the truncated local game $\mathbb{G}^{(v,k)}(\mathcal{A}_{ad}^G,U,\theta,G)$ on $B_k(G,v)$, and define the global action profile $\hat a^{(k)}\in\mathcal A^G$ constructed through local game equilibria by
\[
\hat a^{(k)}_v := \bar a^{(v,k)}_v,\quad v\in G.
\]
By Proposition~\ref{prop:local-approximation} applied at each vertex $u\in G$,
\begin{equation}\label{eq:sup-close-Lip}
\sup_{u\in G}\|\hat a^{(k)}_u-\bar a_u\|_{\mathcal A}\le 2M\rho^k.
\end{equation}
Fix $v\in G$ and set $\hat z^{(k)}_v:=z_v(\hat a^{(k)})$ and $\bar z_v:=z_v(\bar a)$.
Then, since $z_v(\cdot)$ is an average over neighbors and by \eqref{eq:sup-close-Lip},
\begin{equation}\begin{aligned}\label{eq:agg-close-Lip}
\|\hat z_v^{(k)}-\bar z_v\|_{\mathcal{A}}&\le \sup_{u\sim v}\|\hat a^{(k)}_u-\bar a_u\|_{\mathcal A}\\
&\le 2M\rho^k.
\end{aligned}
\end{equation}
Let $\hat a_v\in\mathcal A_v$ be such that
\be\label{eq:hat-av}
U(\hat a_v,\hat z_v^{(k)},\theta_v)=\sup_{\tilde a\in\mathcal A_v}U(\tilde a,\hat z_v^{(k)},\theta_v),
\ee
which exists by Assumption~\ref{assum:U}.
Then
\be\label{eq:sup-equality}
\sup_{\tilde a\in\mathcal A_v}U(\tilde a,\hat z^{(k)}_v,\theta_v)-U(\hat a^{(k)}_v,\hat z^{(k)}_v,\theta_v)=U(\hat a_v,\hat z^{(k)}_v,\theta_v)-U(\hat a^{(k)}_v,\hat z^{(k)}_v,\theta_v).
\ee
Now
\be\begin{aligned}\label{eq:split-difference}
U(\hat a_v,\hat z^{(k)}_v,\theta_v)-U(\hat a^{(k)}_v,\hat z^{(k)}_v,\theta_v)
&= \big(U(\hat a_v,\hat z^{(k)}_v,\theta_v)-U(\hat a_v,\bar z_v,\theta_v)\big)\\
&\quad + \big(U(\hat a_v,\bar z_v,\theta_v)-U(\bar a_v,\bar z_v,\theta_v)\big)\\
&\quad +\big(U(\bar a_v,\bar z_v,\theta_v)-U(\hat a^{(k)}_v,\hat z^{(k)}_v,\theta_v)\big).
\end{aligned}\ee
The second term of the right-hand side of \eqref{eq:split-difference} is $\leq 0$ by property \eqref{eq:Nash} of $\bar a_v$. Its third term can be bound using the triangle inequality,
\be\begin{aligned}\label{eq:third-term}
\big|U(\bar a_v,\bar z_v,\theta_v)-U(\hat a^{(k)}_v,\hat z^{(k)}_v,\theta_v)\big|
\le& \big|U(\bar a_v,\bar z_v,\theta_v)-U(\hat a^{(k)}_v,\bar z_v,\theta_v)\big|\\
&+\big|U(\hat a^{(k)}_v,\bar z_v,\theta_v)-U(\hat a^{(k)}_v,\hat z^{(k)}_v,\theta_v)\big|.
\end{aligned}\ee
Applying \eqref{eq:U-Lip} to the first term of the right-hand side of \eqref{eq:split-difference} and the two terms on the right-hand side of \eqref{eq:third-term} yields
\be\label{eq:Lz-La-bound}
U(\hat a_v,\hat z^{(k)}_v,\theta_v)-U(\hat a^{(k)}_v,\hat z^{(k)}_v,\theta_v)
\le 2L_z\|\hat z^{(k)}_v-\bar z_v\|_{\mathcal A}+L_a\|\hat a^{(k)}_v-\bar a_v\|_{\mathcal A}.
\ee
Combining \eqref{eq:sup-close-Lip}, \eqref{eq:agg-close-Lip}, \eqref{eq:sup-equality}, and \eqref{eq:Lz-La-bound} gives
\[
\sup_{\tilde a\in\mathcal A_v}U(\tilde a,z_v(\hat a^{(k)}),\theta_v)
-U(\hat a^{(k)}_v,z_v(\hat a^{(k)}),\theta_v)
\le 2M\rho^k\,(L_a+2L_z).
\]
Since $v\in G$ was arbitrary, this is exactly \eqref{eq:eps-Nash} with $\varepsilon=\varepsilon_k$.
\end{proof}

We conclude this section with the proof of Proposition~\ref{prop:local-reconstruction}.

\begin{proof}[Proof of Proposition~\ref{prop:local-reconstruction}]  Recall the definitions and notation introduced before Proposition~\ref{prop:local-reconstruction}. Let $\bar a\in\mathcal A^G$ be the unique Nash equilibrium of $\mathbb{G}(\mathcal{A}_{ad}^G,U,\theta,G)$.
Let $H\subset G$ be an induced subgraph and assume that the boundary actions are known exactly,
that is, $b=\bar a_{\partial H}\in\mathcal{A}^{\partial H}$ is given.

(i) We first show that the clamped best-response operator $\boldsymbol{\Phi}^{H,b}_\theta$ from \eqref{eq:best-response-clamped} is a contraction on $\mathcal{A}^{H^\circ}$.
Let $a,a'\in\mathcal A^{H^\circ}$ and extend them to clamped profiles $a\oplus b$ and $a'\oplus b$ on $H$.
For any $u\in H^\circ$, using that the averaging weights in \eqref{eq:local-aggregate-clamped} sum to $1$ and that only interior coordinates may differ,
\be\begin{aligned}\label{eq:clamped-agg-Lip}
\big\|z_u^{H,b}(a)-z_u^{H,b}(a')\big\|_{\mathcal A}
&\le \sup_{w\in H^\circ}\|a_w-a'_w\|_{\mathcal A}\\
&= \|a-a'\|_{\mathcal A^{H^\circ}}.
\end{aligned}\ee
Next, similarly to \eqref{eq:B_theta}, define the restricted best-response operator  $\boldsymbol{B}_\theta^{H}$ with domain $\mathcal{A}^{H^\circ}$ by
\be \label{eq:B_theta-restricted}
(\boldsymbol{B}_\theta^{H}z)(u):=\argmax_{\tilde{a}\in \mathcal{A}_u} U\big(\tilde{a}, z_u,\theta_u\big), \quad z=(z_u)_{u\in H^\circ}\in\mathcal{A}^{H^\circ},
\ee
Define the aggregate profile $z^{H,b}(a):=(z^{H,b}_u(a))_{u\in H^\circ}\in\mathcal A^{H^\circ}$. Then, recalling \eqref{eq:best-response-clamped}, we have that 
$$
\big(\boldsymbol{\Phi}^{H,b}_\theta(a)\big)(u)=\big(\boldsymbol{B}^{H}_{\theta}z^{H,b}(a)\big)(u),\quad \text{for all } u\in H^\circ,
$$ 
and an analogous equality holds for $a'$. Therefore, recalling $\rho=\ell_U/\gamma_U$ and applying the pointwise sensitivity estimate \eqref{eq:sensitivity} on $H^\circ$, we obtain that
\be\label{eq:clamped-sensitivity}
\big\|\big(\boldsymbol{\Phi}^{H,b}_\theta(a)\big)(u)-\big(\boldsymbol{\Phi}^{H,b}_\theta(a')\big)(u)\big\|_{\mathcal A}
\le \rho\,\big\|z_u^{H,b}(a)-z_u^{H,b}(a')\big\|_{\mathcal A},\quad \text{for all } u\in H^\circ.
\ee
Combining \eqref{eq:clamped-agg-Lip} and \eqref{eq:clamped-sensitivity} and taking the supremum over all $u\in H^\circ$ thus yields
\be\label{eq:contraction-clamped}
\|\boldsymbol{\Phi}^{H,b}_\theta(a)-\boldsymbol{\Phi}^{H,b}_\theta(a')\|_{\mathcal A^{H^\circ}}
\le \rho\,\|a-a'\|_{\mathcal A^{H^\circ}},
\ee
so $\boldsymbol{\Phi}^{H,b}_\theta$ is a contraction with constant $\rho<1$. This proves (i).

(ii) Next, we verify that $\bar a_{H^\circ}=(\bar a_u)_{u\in H^\circ}\in\mathcal A^{H^\circ}$ is a fixed point when $b=\bar a_{\partial H}$.
Let $u\in H^\circ$, then, since $u$ has no neighbors outside $H$, recalling \eqref{eq:local-aggregate} and \eqref{eq:local-aggregate-clamped}, we have
\be\label{eq:aggregates-agree}\begin{aligned}
z_u(\bar a)&=\frac{\mathds{1}_{\{\deg_G(u)>0\}}}{\deg_G(u)}\sum_{w\sim u}\bar a_w\\
&=\frac{\mathds{1}_{\{\deg_G(u)>0\}}}{\deg_G(u)}\sum_{w\sim u}(\bar a_{H^\circ}\oplus \bar a_{\partial H})_w\\
&= z_u^{H,\bar a_{\partial H}}(\bar a_{H^\circ}).
\end{aligned}\ee
Now, using the Nash equilibrium condition \eqref{eq:Nash} for $u$ and \eqref{eq:aggregates-agree} yields
\be\begin{aligned}
\bar a_u&=\argmax_{\tilde a\in\mathcal A_u}U\big(\tilde a, z_u(\bar a),\theta_u\big)\\
&=\argmax_{\tilde a\in\mathcal A_u}U\big(\tilde a, z_u^{H,\bar a_{\partial H}}(\bar a_{H^\circ}),\theta_u\big)\\
&=\big(\boldsymbol{\Phi}^{H,\bar a_{\partial H}}_\theta(\bar a_{H^\circ})\big)(u).
\end{aligned}\ee
Thus $\boldsymbol{\Phi}^{H,b}_\theta(\bar a_{H^\circ})=\bar a_{H^\circ}$, so $\bar a_{H^\circ}$ is a fixed point of $\boldsymbol{\Phi}^{H,b}_\theta$.
By the uniqueness of the fixed point from (i), this proves (ii).

(iii) Finally, the contraction property \eqref{eq:contraction-clamped} and Assumption~\ref{assum:Av} imply for all $k\in\N_0$,
\be\begin{aligned}
\|a^{H,b,(k)}-\bar a_{H^\circ}\|_{\mathcal A^{H^\circ}}
&\le \rho^k\,\|a^{H,b,(0)}-\bar a_{H^\circ}\|_{\mathcal A^{H^\circ}}\\
&\le \rho^k\,M,
\end{aligned}\ee
completing the proof of (iii).
\end{proof}

\section{Proofs of the Results in Section~\ref{subsec:convergence}}\label{sec:proofs-convergence}

This section is dedicated to the proofs of Theorem~\ref{thm:convergence} and Corollary~\ref{cor:convergence}. For this, several auxiliary lemmas are needed. Recall Definition~\ref{def:local-convergence-marked} of marked local convergence and the metric $d_\ast$ from \eqref{eq:d-ast-marked}.

\begin{lemma}\label{lemma:map1}
The map 
\be
\mathcal G_\ast[\mathcal{A}]\to \mathcal G_\ast[\mathcal{A}],\quad (G,(a_v)_{v\in G})\mapsto (G, (z_v)_{v\in G}),\quad z_v:=\frac{\mathds{1}_{\{\deg_G(v)>0\}}}{\deg_G(v)}\sum_{u\sim v} a_u,\quad v\in G,
\ee
is continuous with respect to the metric $d_\ast$.
\end{lemma}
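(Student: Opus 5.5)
We prove sequential continuity, which suffices in the metric space $(\mathcal G_\ast[\mathcal A],d_\ast)$. Take $(G_n,a^n)\to(G,a)$ in $d_\ast$, i.e., in the sense of Definition~\ref{def:local-convergence-marked}. Write $z^n$ and $z$ for the image marks. We must show that for every $k\in\N_0$ and $\eps>0$ there is $n_0$ such that for all $n\ge n_0$ some isomorphism $\psi:B_k(G_n)\to B_k(G)$ satisfies $\max_{v\in B_k(G_n)}\|z^n_v-z_{\psi(v)}\|_{\mathcal A}<\eps$.

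The key idea is to go one level deeper. Apply the convergence hypothesis with radius $k+1$ and tolerance $\eps$. This gives $n_0$ such that for $n\ge n_0$ there is an isomorphism $\varphi:B_{k+1}(G_n)\to B_{k+1}(G)$ with $\max_{u\in B_{k+1}(G_n)}\|a^n_u-a_{\varphi(u)}\|_{\mathcal A}<\eps$. Set $\psi:=\varphi|_{B_k(G_n)}$.

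Since $\varphi$ is a root-preserving graph isomorphism of the balls, it preserves distances from the root for vertices in $B_{k+1}$. Hence $\psi$ maps $B_k(G_n)$ bijectively onto $B_k(G)$, and it preserves the induced edges, so $\psi$ is an isomorphism of the $k$-balls.

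The main point to check is that neighborhoods are transported correctly. Let $v\in B_k(G_n)$. Every neighbor of $v$ in $G_n$ lies in $B_{k+1}(G_n)$, and the ball is an induced subgraph. Therefore $N_v(G_n)$ is exactly the neighborhood of $v$ inside $B_{k+1}(G_n)$, and likewise $N_{\varphi(v)}(G)$ is the neighborhood of $\varphi(v)$ inside $B_{k+1}(G)$. Because $\varphi$ is an isomorphism of these balls, it restricts to a bijection $N_v(G_n)\to N_{\varphi(v)}(G)$. In particular $\deg_{G_n}(v)=\deg_G(\psi(v))$. Degrees are finite because the graphs are locally finite.

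Now estimate the aggregates. If the degree is $0$, both aggregates vanish. Otherwise, reindexing the sum via $\varphi$ and using the triangle inequality,
\[
\|z^n_v-z_{\psi(v)}\|_{\mathcal A}\le \frac{1}{\deg_{G_n}(v)}\sum_{u\sim v}\|a^n_u-a_{\varphi(u)}\|_{\mathcal A}<\eps .
\]
Taking the maximum over the finitely many $v\in B_k(G_n)$ gives the required bound at radius $k$.

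Finally, the map is well defined on isomorphism classes. An isomorphism of marked graphs $(G,a)\cong(G',a')$ preserves neighborhoods and degrees, so it carries the aggregates $z$ of $(G,a)$ to the aggregates $z'$ of $(G',a')$. Hence the image class does not depend on the chosen representative, and the map is continuous.

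The only nontrivial step is the radius shift from $k$ to $k+1$. The aggregate at a vertex of $B_k$ depends on marks at distance up to $k+1$ from the root, so control on $B_{k+1}$ is exactly what is needed.
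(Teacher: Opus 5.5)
Your proposal is correct and follows essentially the same route as the paper: invoke local convergence at radius $k+1$ with the same tolerance, restrict the resulting isomorphism to the $k$-balls, and bound each aggregate difference by the triangle inequality using invariance of degrees. You also make explicit three points the paper leaves implicit: that the restriction is an isomorphism of $B_k$, that neighborhoods of vertices in $B_k$ are transported bijectively, and that the map is well defined on isomorphism classes.
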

\begin{proof}
Assume that $\{(G_n,a^n)\}_{n\in\N}$ converges locally to $(G,a)$ in $\mathcal G_\ast[\mathcal{A}]$. That is, for every $k\in\N$ and $\eps>0$, there exists $n_{k,\eps}\in\N$ such that for all $n\geq n_{k,\eps}$ there is an isomorphism $\varphi:B_k(G_n)\to B_k(G)$ with 
$$\max_{v\in B_k(G_n)}\|a_v^n-a_{\varphi(v)}\|_{\mathcal A}<\eps.$$ 
Now, given $k'\in \N$ and $\eps'>0$, choose $k=k'+1$ and $\eps=\eps'$. Then, $\varphi$ is an isomorphism $B_{k'+1}(G_n)\to B_{k'+1}(G)$ that satisfies 
$$\max_{u\in B_{k'+1}(G_n)}\|a_u^n-a_{\varphi(u)}\|_{\mathcal A}<\eps'.$$ 
Therefore, since the degree function is invariant under isomorphisms and by the triangle inequality,
\be\begin{aligned}
\max_{v\in B_{k'}(G_n)}\|z_v^n-z_{\varphi(v)}\|_{\mathcal A}&=\max_{v\in B_{k'}(G_n)}\big\|\frac{\mathds{1}_{\{\deg_{G_n}(v)>0\}}}{\deg_{G_n}(v)}\sum_{u\sim v}a^n_u-\frac{\mathds{1}_{\{\deg_{G}(\varphi(v))>0\}}}{\deg_{G}(\varphi(v))}\hspace{-1mm}\sum_{\varphi(u)\sim \varphi(v)}a_{\varphi(u)}\big\|_{\mathcal A}\\
&\leq\max_{v\in B_{k'}(G_n)}\frac{\mathds{1}_{\{\deg_{G_n}(v)>0\}}}{\deg_{G_n}(v)}\sum_{u\sim v}\big\|(a^n_u-a_{\varphi(u)})\big\|_{\mathcal A}< \eps'.
\end{aligned}\ee
That is, $\{(G_n,z^n)\}_{n\in\N}$ converges locally to $(G,z)$ in $\mathcal G_\ast[\mathcal{A}]$. As $d_\ast$ metrizes local convergence, this implies continuity.
\end{proof}
Recall that in Theorem~\ref{thm:convergence} the players are assumed to have homogeneous action sets, that is, $\mathcal{A}_v=\mathcal{A}_{0}$ for some $\mathcal{A}_{0}\subset \mathcal A$ and all~$v$. Define the corresponding best-response map
\be\label{eq:beta}
\beta:\mathcal{A}\times\mathcal{A}\to\mathcal{A},\quad (\tilde z,\tilde \theta)\mapsto \beta_{\tilde\theta}(\tilde z):= \argmax_{\tilde {a}\in\mathcal{A}_0} \ U\big(\tilde{a}, \tilde z,\tilde \theta\big),
\ee
that assigns the best-response to a given local aggregate $\tilde z$ and a heterogeneity process $\tilde\theta$. Under Assumption~\ref{assum:U}, the argmax in \eqref{eq:beta} always exists and $\beta$ is well-defined.
\begin{lemma}\label{lemma:beta-Lipschitz}
Under Assumptions~\ref{assum:U} and \ref{assum:argmax}, the map $\beta$ is jointly Lipschitz continuous, that is,
$$
\left\|\beta_{\tilde\theta}(\tilde z)-\beta_{\tilde\theta'}(\tilde z')\right\|_{\mathcal{A}}\leq\frac{1}{\gamma_U}\big(\ell_U\|\tilde z-\tilde z'\|_{\mathcal{A}}+\ell_\theta\|\tilde\theta-\tilde\theta'\|_{\mathcal{A}}\big),
$$
for all $\tilde z,\tilde z',\tilde \theta,\tilde \theta'\in\mathcal{A}$. 
\end{lemma}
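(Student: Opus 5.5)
The plan is to reduce Lemma~\ref{lemma:beta-Lipschitz} to the pointwise sensitivity estimate \eqref{eq:sensitivity} from the proof of Lemma~\ref{lemma:B_theta}(i). Alternatively, I would reprove that estimate directly from the variational characterization of the argmax. Since all players share the admissible set $\mathcal A_0$, $\beta_{\tilde\theta}(\tilde z)$ coincides with $(\boldsymbol B_\theta z)(v)$ for any single vertex $v$, any profile $z$ with $z_v=\tilde z$, and any $\theta$ with $\theta_v=\tilde\theta$. The cleanest route is therefore to take a one-vertex graph $G=\{v\}$, so that $\mathcal A^G=\mathcal A$, and apply \eqref{eq:sensitivity} with $\mathcal A_v=\mathcal A_0$. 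Assumption~\ref{assum:argmax} holds trivially in this finite case, as the paper notes.

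For a self-contained argument, the key steps are as follows. Fix $\tilde z,\tilde z',\tilde\theta,\tilde\theta'$ and set $a:=\beta_{\tilde\theta}(\tilde z)$ and $a':=\beta_{\tilde\theta'}(\tilde z')$. By strong concavity on the closed convex set $\mathcal A_0$, these maximizers exist uniquely and satisfy the first-order variational inequalities
\[
\langle \nabla_{\tilde a}U(a,\tilde z,\tilde\theta),\, b-a\rangle_{\mathcal A}\le 0,\qquad \langle \nabla_{\tilde a}U(a',\tilde z',\tilde\theta'),\, b-a'\rangle_{\mathcal A}\le 0,\qquad b\in\mathcal A_0.
\]
Take $b=a'$ in the first inequality and $b=a$ in the second, then add them. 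This gives
\[
\langle \nabla_{\tilde a}U(a,\tilde z,\tilde\theta)-\nabla_{\tilde a}U(a',\tilde z',\tilde\theta'),\, a'-a\rangle_{\mathcal A}\le 0.
\]
Next, split the gradient difference as $\big[\nabla U(a,\tilde z,\tilde\theta)-\nabla U(a',\tilde z,\tilde\theta)\big]+\big[\nabla U(a',\tilde z,\tilde\theta)-\nabla U(a',\tilde z',\tilde\theta')\big]$. Strong concavity makes $-\nabla_{\tilde a}U$ $\gamma_U$-strongly monotone, so the first bracket paired with $a'-a$ is at least $\gamma_U\|a-a'\|^2_{\mathcal A}$. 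The second bracket is controlled by Cauchy--Schwarz together with the Lipschitz continuity of $\nabla_{\tilde a}U$ in $(\tilde z,\tilde\theta)$, which gives the bound $(\ell_U\|\tilde z-\tilde z'\|_{\mathcal A}+\ell_\theta\|\tilde\theta-\tilde\theta'\|_{\mathcal A})\|a-a'\|_{\mathcal A}$. Combining the two estimates and dividing by $\|a-a'\|_{\mathcal A}$ (the case $a=a'$ being trivial) yields the claim.

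The only delicate point is justifying the variational inequality and strong monotonicity from Gâteaux differentiability alone. Concavity of $U+\frac{\gamma_U}{2}\|\cdot\|^2$ together with continuous Gâteaux differentiability gives the gradient inequality of concave functions, and hence strong monotonicity. The optimality condition then follows by differentiating $t\mapsto U(a+t(b-a),\tilde z,\tilde\theta)$ at $t=0^+$, which is allowed by convexity of $\mathcal A_0$. These are standard facts, and I expect no real obstacle beyond these routine checks.
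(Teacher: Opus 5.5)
Your proposal is correct and takes essentially the paper's route: the paper also specializes Lemma~\ref{lemma:B_theta}(i), using constant profiles $z_v=\tilde z$ and $\theta_v=\tilde\theta$ (equivalent to your one-vertex graph), so that the pointwise estimate \eqref{eq:sensitivity} yields the claim. Your self-contained variational-inequality argument (add the two optimality conditions, then use $\gamma_U$-strong monotonicity of $-\nabla_{\tilde a}U$ and Cauchy--Schwarz) correctly reproduces the sensitivity result that the paper imports from \cite{neuman2025stochastic}.
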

\begin{proof}
    The proof follows directly from setting $z_v:=\tilde z$, $z'_v:=\tilde z'$, $\theta_v:=\tilde\theta$, $\theta'_v:=\tilde\theta'$ for all $v\in G$ and applying Lemma \ref{lemma:B_theta}(i).
\end{proof}
\begin{lemma}\label{lemma:map2}
The map 
\be
\mathcal G_\ast[\mathcal{A}\times\mathcal{A}]\to \mathcal G_\ast[\mathcal{A}\times\mathcal{A}],\quad \big(G,(z_v)_{v\in G},(\theta_v)_{v\in G}\big)\mapsto \big(G, \big(\beta_{\theta_v}(z_v)\big)_{v\in G},(\theta_v)_{v\in G}\big),
\ee
is continuous with respect to the metric $d_\ast$.
\end{lemma}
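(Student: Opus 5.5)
We argue exactly as in the proof of Lemma~\ref{lemma:map1}, working directly with the characterization of convergence in Definition~\ref{def:local-convergence-marked}. Since $d_\ast$ metrizes local convergence of marked rooted graphs, it suffices to show sequential continuity. The mark space here is $\mathcal X=\mathcal A\times\mathcal A$, equipped with the metric
\[
d\big((z,\theta),(z',\theta')\big)=\|z-z'\|_{\mathcal A}+\|\theta-\theta'\|_{\mathcal A}.
\]
Any equivalent product metric, such as the maximum, works the same way up to constants.

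So assume that $\{(G_n,z^n,\theta^n)\}_{n\in\N}$ converges locally to $(G,z,\theta)$ in $\mathcal G_\ast[\mathcal A\times\mathcal A]$. Fix $k\in\N$ and $\eps'>0$, and set
\[
c:=1+\frac{\ell_U+\ell_\theta}{\gamma_U},\qquad \eps:=\frac{\eps'}{c}.
\]
By assumption there is $n_{k,\eps}$ such that for every $n\ge n_{k,\eps}$ we have an isomorphism $\varphi:B_k(G_n)\to B_k(G)$ with
\[
\max_{v\in B_k(G_n)}\Big(\|z^n_v-z_{\varphi(v)}\|_{\mathcal A}+\|\theta^n_v-\theta_{\varphi(v)}\|_{\mathcal A}\Big)<\eps.
\]
The map acts vertex by vertex and leaves the graph unchanged. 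Unlike in Lemma~\ref{lemma:map1}, no enlargement of the radius is needed, and the same isomorphism $\varphi$ serves for the image marks.

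For each $v\in B_k(G_n)$, Lemma~\ref{lemma:beta-Lipschitz} gives
\[
\|\beta_{\theta^n_v}(z^n_v)-\beta_{\theta_{\varphi(v)}}(z_{\varphi(v)})\|_{\mathcal A}\le \frac{1}{\gamma_U}\Big(\ell_U\|z^n_v-z_{\varphi(v)}\|_{\mathcal A}+\ell_\theta\|\theta^n_v-\theta_{\varphi(v)}\|_{\mathcal A}\Big)<\frac{\ell_U+\ell_\theta}{\gamma_U}\,\eps .
\]
The second coordinate $\theta$ is passed through unchanged. Adding its contribution, which is below $\eps$, the new marks differ under $\varphi$ by less than $c\,\eps=\eps'$, uniformly over $B_k(G_n)$. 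This is exactly local convergence of the image sequence to $\big(G,(\beta_{\theta_v}(z_v))_v,\theta\big)$, and continuity follows.

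The only delicate point is whether the map is well defined on isomorphism classes. It is, because $\beta$ does not depend on the vertex label: the action sets are homogeneous, $\mathcal A_v=\mathcal A_0$, so isomorphic marked graphs are sent to isomorphic marked graphs. Beyond this check, the argument is a direct application of the uniform Lipschitz bound, so I expect no real obstacle.
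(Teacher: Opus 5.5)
Your proposal is correct and follows the paper's approach: the paper's one-line proof invokes exactly Definition~\ref{def:local-convergence-marked} together with the global Lipschitz bound of Lemma~\ref{lemma:beta-Lipschitz}, and you carry this out explicitly, reusing the same isomorphism $\varphi$ with no enlargement of the radius. Your check that the map is well defined on isomorphism classes, thanks to the homogeneous action sets $\mathcal A_v=\mathcal A_0$, is a worthwhile addition that the paper leaves implicit.
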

\begin{proof}
This follows by Definition~\ref{def:local-convergence-marked} of marked local convergence and the global Lipschitz property of $\beta$ from Lemma \ref{lemma:beta-Lipschitz}. 
\end{proof}
Combining Lemma \ref{lemma:map1} and \ref{lemma:map2} yields the following useful result.
\begin{lemma}\label{lemma:B-continuous}
The map
\be\begin{aligned}\label{eq:B}
B:\mathcal G_\ast[\mathcal{A}\times\mathcal{A}]&\to \mathcal G_\ast[\mathcal{A}\times\mathcal{A}],\\
(G,(a_v)_{v\in G},(\theta_v)_{v\in G})&\mapsto \big(G, \big(\beta_{\theta_v}(\frac{\mathds{1}_{\{\deg_G(v)>0\}}}{\deg_G(v)}\sum_{u\sim v}a_u)\big)_{v\in G},(\theta_v)_{v\in G}\big),
\end{aligned}\ee
is continuous with respect to the metric $d_\ast$.
\end{lemma}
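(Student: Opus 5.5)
The plan is to write $B$ as a composition of two maps that are already known to be continuous. Define the first map
\[
\Psi_1:\mathcal G_\ast[\mathcal A\times\mathcal A]\to\mathcal G_\ast[\mathcal A\times\mathcal A],\quad (G,a,\theta)\mapsto (G,z,\theta),\qquad z_v=\frac{\mathds 1_{\{\deg_G(v)>0\}}}{\deg_G(v)}\sum_{u\sim v}a_u .
\]
Let $\Psi_2$ be the map of Lemma~\ref{lemma:map2}. Then $B=\Psi_2\circ\Psi_1$ directly from \eqref{eq:B}. Continuity of $\Psi_2$ is exactly Lemma~\ref{lemma:map2}, so it remains to show that $\Psi_1$ is continuous, and the conclusion follows because compositions of continuous maps are continuous.

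Lemma~\ref{lemma:map1} is stated only for marks in $\mathcal A$, so for $\Psi_1$ I will redo its argument with pair-valued marks. I equip $\mathcal A\times\mathcal A$ with the max metric $d((a,\theta),(a',\theta'))=\max\{\|a-a'\|_{\mathcal A},\|\theta-\theta'\|_{\mathcal A}\}$. This is equivalent to any product metric, so it induces the same local topology. Suppose $(G_n,a^n,\theta^n)\to(G,a,\theta)$ locally. Fix $k'$ and $\varepsilon'$, and apply Definition~\ref{def:local-convergence-marked} with $k=k'+1$ and $\varepsilon=\varepsilon'$. This gives, for all large $n$, an isomorphism $\varphi:B_{k'+1}(G_n)\to B_{k'+1}(G)$ along which both $a$-marks and $\theta$-marks differ by less than $\varepsilon'$. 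Its restriction to $B_{k'}(G_n)$ is an isomorphism onto $B_{k'}(G)$.

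For every $v\in B_{k'}(G_n)$, all neighbours of $v$ lie in $B_{k'+1}(G_n)$, and $\varphi$ preserves both neighbourhoods and degrees. The averaging estimate from the proof of Lemma~\ref{lemma:map1} therefore gives $\|z^n_v-z_{\varphi(v)}\|_{\mathcal A}<\varepsilon'$. The $\theta$-marks are carried along unchanged, so they also differ by less than $\varepsilon'$. Hence $\Psi_1(G_n,a^n,\theta^n)\to\Psi_1(G,a,\theta)$ locally, and since $d_\ast$ metrizes local convergence, $\Psi_1$ is continuous.

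The only delicate point is the well-definedness of these maps on isomorphism classes, and the requirement that the same isomorphism $\varphi$ controls both coordinates of the marks. Both are handled by the max metric and by the isomorphism invariance of degrees, so I expect no genuine obstacle.
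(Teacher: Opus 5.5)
Your proposal is correct and follows the paper's proof: the paper also writes $B$ as the composition of the averaging map from Lemma~\ref{lemma:map1} with the best-response map from Lemma~\ref{lemma:map2}. Your re-derivation of Lemma~\ref{lemma:map1} for pair-valued marks, with $\theta$ carried along under the same isomorphism, makes explicit a step the paper leaves implicit, since that lemma is stated only for $\mathcal A$-marks.
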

\begin{proof}
By Lemma \ref{lemma:map1} and \ref{lemma:map2}, $B$ is given by a composition of continuous maps.
\end{proof}

The final ingredient we need for the proof of Theorem~\ref{thm:convergence} is the following statement.
\begin{lemma}\label{lemma:aux}
Let $(\mathcal X,d)$ be a metric space. Let $\{X_n\}_{n\in\N}$ and $X$ be $\mathcal X$-valued random variables on a probability space $(\tilde \Omega,\tilde{\mathcal{F}},\mu)$. For any $\eps>0$, assume that there are $\mathcal X$-valued random variables  $\{X^\eps_n\}_{n\in\N}$ and $X^\eps$ such that 
\begin{enumerate}
    \item $d(X_n,X_n^\eps)\leq \eps$ for all $n\in \N$ and $d(X,X^\eps)\leq \eps$, $\mu$-almost surely. 
    \item $(X_n^\eps)_{n\in\N}$ converges in distribution to $X^\eps$.
\end{enumerate}
Then $\{X_n\}_{n\in\N}$ converges in distribution to $X$.
\end{lemma}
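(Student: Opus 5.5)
The natural approach is the Portmanteau theorem: it suffices to show $\E_\mu[h(X_n)]\to\E_\mu[h(X)]$ for every bounded, Lipschitz function $h:\mathcal X\to\R$. Bounded Lipschitz test functions characterize convergence in distribution on any metric space, separable or not, as long as the laws involved are Borel. So fix such an $h$ with $\|h\|_\infty<\infty$ and Lipschitz constant $\mathrm{Lip}(h)$, and fix $\eps>0$.

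The first step is a three-term splitting:
\[
\big|\E_\mu[h(X_n)]-\E_\mu[h(X)]\big|\le \E_\mu\big[|h(X_n)-h(X_n^\eps)|\big]+\big|\E_\mu[h(X_n^\eps)]-\E_\mu[h(X^\eps)]\big|+\E_\mu\big[|h(X^\eps)-h(X)|\big].
\]
By hypothesis 1 and the Lipschitz property, the first and third terms are each at most $\mathrm{Lip}(h)\,\eps$, uniformly in $n$. By hypothesis 2, applied to the bounded continuous function $h$, the middle term tends to $0$ as $n\to\infty$. Hence
\[
\limsup_{n\to\infty}\big|\E_\mu[h(X_n)]-\E_\mu[h(X)]\big|\le 2\,\mathrm{Lip}(h)\,\eps.
\]

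The final step is to let $\eps\downarrow0$. The left-hand side does not depend on $\eps$, so the limsup is $0$. Since $h$ was an arbitrary bounded Lipschitz function, the Portmanteau theorem gives $X_n\to X$ in distribution.

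The only step needing care is the reduction to bounded Lipschitz test functions. This is standard for Borel probability measures on a metric space: every closed set $F$ admits the Lipschitz approximations $h_m(x)=\max\{0,1-m\,d(x,F)\}\downarrow \mathds{1}_F$, which yields the closed-set form of Portmanteau. A further minor point is measurability of $d(X_n,X_n^\eps)$, but the hypothesis is stated almost surely, so that suffices for the pointwise bound inside the expectation. No other obstacle is expected; in the paper's setting $\mathcal X=\mathcal G_\ast[\mathcal A\times\mathcal A]$ is a metric space, so the lemma applies directly.
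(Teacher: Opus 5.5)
Your proof is correct, but it uses a different form of the Portmanteau theorem than the paper does. The paper works directly with closed sets. For closed $K$ it uses the inclusions $\{X_n\in K\}\subset\{X_n^\eps\in K^\eps\}$ and $\{X^\eps\in K^\eps\}\subset\{X\in K^{2\eps}\}$, applies Portmanteau to the closed enlargement $K^\eps$, and obtains $\limsup_n\mu(X_n\in K)\le\mu(X\in K^{2\eps})$. It then lets $\eps\downarrow 0$, using $K^{2\eps}\downarrow K$ and continuity of $\mu$.

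You instead test against bounded Lipschitz $h$ with a three-term splitting. This yields the quantitative bound $\limsup_n|\E_\mu[h(X_n)]-\E_\mu[h(X)]|\le 2\,\mathrm{Lip}(h)\,\eps$. The cost is the extra standard fact that bounded Lipschitz functions determine convergence in distribution on an arbitrary metric space. You justify it correctly via $h_m(x)=\max\{0,1-m\,d(x,F)\}$, and that justification is the same closed-set enlargement mechanism the paper uses, since $\mathds{1}_F\le h_m\le \mathds{1}_{\{d(\cdot,F)\le 1/m\}}$. So the paper's route is slightly more self-contained and needs only set inclusions. Yours is more quantitative and matches the $\operatorname{BL}_1$-type estimates the paper itself uses later in the proof of Theorem~\ref{thm:convergence}(ii) (cf.\ \eqref{eq:term1} and \eqref{eq:term3}).

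Your remark on measurability also holds, and in fact you need even less than you say. The integrand $|h(X_n)-h(X_n^\eps)|$ is measurable as a difference of real random variables. It is bounded by $\mathrm{Lip}(h)\,\eps$ off a null set, so measurability of $d(X_n,X_n^\eps)$ itself is never required.
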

\begin{proof}
Let $K\subset\mathcal X$ be closed. For $\delta>0$ define the closed sets
$$
K^\delta := \{x\in\mathcal X:\ d(x,K)\le \delta\}.
$$
Now $d(X_n,X_n^\eps)\leq \eps$ implies $\{X_n\in K\}\subset \{X_n^\eps\in K^\eps\}$ for all $n\in\N$, and thus
\begin{equation}\label{eq:aux-prob-1}
\mu(X_n\in K)\le \mu(X_n^\eps\in K^\eps),\quad n\in\N.
\end{equation}
By assumption, $X_n^\eps\Rightarrow X^\eps$. Since $K^\eps$ is closed, the Portmanteau lemma yields
\begin{equation}\label{eq:aux-portmanteau}
\limsup_{n\to\infty}\mu(X_n^\eps\in K^\eps)\le \mu(X^\eps\in K^\eps).
\end{equation}
Combining \eqref{eq:aux-prob-1} and \eqref{eq:aux-portmanteau} gives
\begin{equation}\label{eq:aux-prob-2}
\limsup_{n\to\infty}\mu(X_n\in K)\le \mu(X^\eps\in K^\eps).
\end{equation}
Next, $d(X,X^\eps)\le \eps$ implies that $\{X^\eps\in K^\eps\}\subset \{X\in K^{2\eps}\}$, and thus
\begin{equation}\label{eq:aux-prob-3}
\mu(X^\eps\in K^\eps)\le \mu(X\in K^{2\eps}).
\end{equation}
Combining \eqref{eq:aux-prob-2} and \eqref{eq:aux-prob-3} yields
\begin{equation}\label{eq:aux-prob-4}
\limsup_{n\to\infty}\mu(X_n\in K)\le \mu(X\in K^{2\eps}).
\end{equation}
Since $K$ is closed, we have $K^{2\eps}\downarrow K$ as $\eps\downarrow 0$. By continuity $\mu$,
$$
\lim_{\eps\downarrow 0}\mu(X\in K^{2\eps})
=\mu(X\in K).
$$
Thus taking the limit over $\eps\downarrow 0$ in \eqref{eq:aux-prob-4} we conclude that
\[
\limsup_{n\to\infty}\mu(X_n\in K)\le \mu(X\in K).
\]
By the Portmanteau lemma, $X_n\Rightarrow X$, as claimed.
\end{proof}

We are now ready to prove Theorem \ref{thm:convergence}. Note that we have only worked with local convergence of $\mathcal{A}$-marked rooted graphs (see Definition~\ref{def:local-convergence-marked}) so far. Now we will also consider local weak convergence of $\mathcal{A}$-marked unrooted graphs (see Definition~\ref{def:local-weak-convergence} and Remark~\ref{rem:local-weak-convergence-marked}).

\begin{proof}[Proof of Theorem \ref{thm:convergence}] Recall Convention~\ref{convention:quenched} and suppose that the assumptions of Theorem~\ref{thm:convergence} hold.
Given a random rooted graph $G\in \mathcal{G}_\ast$, let 
$$(G,(a^{(0)}_v)_{v\in G},(\theta_v)_{v\in G})\in \mathcal G_\ast[\mathcal{A}\times \mathcal{A}]$$
be defined by $a^{(0)}_v=0$ for all $v\in G$. Similar to the proof of Proposition~\ref{prop:correlation-decay}, inductively define the Picard iterates in $\mathcal G_\ast[\mathcal{A}\times\mathcal{A}]$ given by
\be\label{eq:Picard-iterates}
(G,(a^{(k)}_v)_{v\in G},(\theta_v)_{v\in G}):=B\big(G,(a^{(k-1)}_v)_{v\in G},(\theta_v)_{v\in G}\big), \quad k\geq 1,
\ee
where $B$ is defined in \eqref{eq:B}. Recall that $\rho=\ell_U/\gamma_U$. Then, by Lemma \ref{lemma:beta-Lipschitz}, it holds for all $v\in G$ that 
\be\begin{aligned}
\|a^{(k)}_v-\bar a_v\|_{\mathcal A}&=\| \beta_{\theta_v}(\frac{\mathds{1}_{\{\deg_G(v)>0\}}}{\deg_G(v)}\sum_{u\sim v}a^{(k-1)}_u) - \beta_{\theta_v}(\frac{\mathds{1}_{\{\deg_G(v)>0\}}}{\deg_G(v)}\sum_{u\sim v}\bar a_u)\|_{\mathcal A}\\
&\leq \rho\ \|\frac{\mathds{1}_{\{\deg_G(v)>0\}}}{\deg_G(v)}\sum_{u\sim v}(a^{(k-1)}_u-\bar a_u)\|_{\mathcal A}\\
&\leq\rho\ \sup_{u\in G}\|a^{(k-1)}_u-\bar a_u\|_{\mathcal A},
\end{aligned}\ee
and thus by iteration and Assumption \ref{assum:Av} for all $v\in G$ that 
\be\begin{aligned}\label{eq:geometric-bound}
\|a^{(k)}_v-\bar a_v\|_{\mathcal A}&\leq \rho^k \sup_{u\in G}\|a^{(0)}_u-\bar a_u\|_{\mathcal A}\\
&= \rho^k \sup_{u\in G}\|\bar a_u\|_{\mathcal A}\\
&\leq \rho^kM.
\end{aligned}\ee
Now consider a sequence $\{G_n\}_{n\in\N}\subset\mathcal{G}$ of finite graphs with marks $(\theta^n_v)_{v\in G_n}\in\mathcal{A}^{G_n}$ and let $(G,\theta)\in \mathcal G_\ast[\mathcal{A}]$. For $n\in\N$, let $\bar a^n\in\mathcal{A}^{G_n}$ be the  unique Nash equilibrium of the game $\smash{\mathbb{G}(\mathcal{A}_{ad}^{G_n},U,\theta^n,G_n)}$ with $\mathcal{A}_v=\mathcal{A}_{0}$ for all $v\in G_n$, and let $\bar a\in\mathcal{A}^{G}$ be the unique Nash equilibrium of the game $\smash{\mathbb{G}(\mathcal{A}_{ad}^{G},U,\theta,G)}$ with $\mathcal{A}_v=\mathcal{A}_{0}$ for all $v\in G$. 

(i) First, assume that $\{(G_n,\theta^n)\}_{n\in\N}$ converges in distribution in the local weak sense to $(G,\theta)$. By Remark~\ref{rem:local-weak-convergence-marked}, this means that 
\be\label{eq:conv-theta-marks}
\E_{\Q}\Big[\frac{1}{|G_n|}\sum_{v \in G_n} h\big(C_v(G_n,\theta^n)\big)\Big]
\;\xrightarrow[n\to\infty]\;
\E_{\Q}\big[h(G,\theta)\big],
\quad \text{for all } h \in C_b\big(\mathcal G_\ast[\mathcal A]\big).
\ee
Let $\smash{a^{(0),n}_v=0}$ for all $v\in G_n$ and $n\in\N$, and $a^{(0)}_v=0$ for all $v\in G$. Let $\iota:\mathcal G_\ast[\mathcal A]\to \mathcal G_\ast[\mathcal A\times\mathcal{A}]$ denote the (continuous) inclusion adding a first coordinate equal to zero. Then setting $h:=\hat h\circ\iota$ in \eqref{eq:conv-theta-marks} implies
\be\label{eq:conv-a0-theta-marks}
\E_{\Q}\Big[\frac{1}{|G_n|}\sum_{v \in G_n} \hat h\big(C_v(G_n,a^{(0),n},\theta^n)\big)\Big]
\;\xrightarrow[n\to\infty]\;
\E_{\Q}\big[\hat h(G,a^{(0)},\theta)\big],
\quad \text{for all } \hat h \in C_b\big(\mathcal G_\ast[\mathcal A\times\mathcal{A}]\big).
\ee
Recall that $B$ is defined in \eqref{eq:B}. Analogously to \eqref{eq:Picard-iterates}, for $n\in\N$, define the Picard iterates on $G_n$ given by 
\be\label{eq:Picard-iterates-Gn}
(G_n,(a^{(k),n}_v)_{v\in G_n},(\theta^n_v)_{v\in G_n}):=B\big(G_n,(a^{(k-1),n}_v)_{v\in G_n},(\theta^n_v)_{v\in G_n}\big), \quad k\geq 1.
\ee
Then, by Lemma \ref{lemma:B-continuous} and \eqref{eq:Picard-iterates}, \eqref{eq:conv-a0-theta-marks}, \eqref{eq:Picard-iterates-Gn}, given $\hat g \in C_b\big(\mathcal G_\ast[\mathcal A\times\mathcal{A}]\big)$ and $k\in\N_0$, setting $\smash{\hat h:=\hat g\circ B^k}$ in \eqref{eq:conv-a0-theta-marks} yields
\be\label{eq:conv-ak-theta-marks}
\E_{\Q}\Big[\frac{1}{|G_n|}\sum_{v \in G_n} \hat g\big(C_v(G_n,a^{(k),n},\theta^n)\big)\Big]
\;\xrightarrow[n\to\infty]\;
\E_{\Q}\big[\hat g(G,a^{(k)},\theta)\big],
\quad \text{for all } \hat g \in C_b\big(\mathcal G_\ast[\mathcal A\times\mathcal{A}]\big).
\ee
Now given $g \in C_b\big(\mathcal G_\ast[\mathcal A]\big)$, and letting $p:\mathcal G_\ast[\mathcal A\times\mathcal{A}]\to \mathcal G_\ast[\mathcal A]$ denote the projection of the mark onto the first coordinate (which is continuous), setting $\hat g:=g\circ p$ implies  
\be\label{eq:conv-ak-marks}
\E_{\Q}\Big[\frac{1}{|G_n|}\sum_{v \in G_n} g\big(C_v(G_n,a^{(k),n})\big)\Big]
\;\xrightarrow[n\to\infty]\;
\E_{\Q}\big[g(G,a^{(k)})\big],
\quad \text{for all } g \in C_b\big(\mathcal G_\ast[\mathcal A]\big),
\ee
i.e.,~the sequence of $k$-th Picard iterates $\{(G_n,a^{(k),n})\}_{n\in\N}$ corresponding to $(G_n,\theta^n)$ converges in distribution in the local weak sense to the $k$-th Picard iterate $(G,a^{(k)})$ corresponding to $(G,\theta)$. Equivalently, by Definition~\ref{def:local-weak-convergence}, given an independent uniform $\Q$-random variable $U$ on the vertex set of $G_n$, it holds that $\{C_U(G_n,a^{(k),n})\}_{n\in\N}$ converges in distribution to $(G,a^{(k)})$ in $\mathcal{G}_\ast[\mathcal{A}]$.

Now, recalling Assumptions~\ref{assum:U} and \ref{assum:Av}, given any $\eps>0$, choose $k$ so large that $\rho^kM\leq \eps$.
Then, using \eqref{eq:geometric-bound} applied to $G_n$ and $G$ and recalling the definition of $d_\ast$ in \eqref{eq:d-ast-marked}, it holds that
\be
d_\ast\big(C_U(G_n,\bar a^n),C_U(G_n, a^{(k),n})\big)\leq \eps,\quad d_\ast\big((G,\bar a),(G, a^{(k)})\big) \leq \eps,\quad \Q\text{-a.s.~for all }n\in\N.
\ee
Therefore, we can apply Lemma \ref{lemma:aux} on $(\mathcal G_\ast[\mathcal{A}],d_\ast)$ to deduce that $\{C_U(G_n,\bar a^n)\}_{n\in\N}$ converges in distribution to $(G,\bar a)$ in $\mathcal{G}_\ast[\mathcal{A}]$. This is equivalent to convergence of $\{(G_n,\bar a^n)\}_{n\in\N}$ to $(G,\bar a)$ in distribution in the local weak sense, completing the proof of (i).

(ii) Second, assume that $\{(G_n,\theta^n)\}_{n\in\N}$ converges in $\Q$-probability in the local weak sense to $(G,\theta)$. By Remark~\ref{rem:local-weak-convergence-marked}, this means that 
\be\label{eq:prob-conv-theta-marks}
\frac{1}{|G_n|}\sum_{v \in G_n} h\big(C_v(G_n,\theta^n)\big)\;\xrightarrow[n\to\infty]{\ \text{prob.}\ }\;\E_{\Q}\big[h(G,\theta)\big],\quad \text{for all } h \in C_b(\mathcal G_\ast[\mathcal{A}]).
\ee
By the same line of argument as used in \eqref{eq:conv-a0-theta-marks}, \eqref{eq:conv-ak-theta-marks}, \eqref{eq:conv-ak-marks}, we obtain for any $k\in\N_0$ that
\be\label{eq:prob-conv-ak-marks}
\frac{1}{|G_n|}\sum_{v \in G_n} g\big(C_v(G_n,a^{(k),n})\big)
\;\xrightarrow[n\to\infty]{\ \text{prob.}\ }\;
\E_{\Q}\big[g(G,a^{(k)})\big],
\quad \text{for all } g \in C_b\big(\mathcal G_\ast[\mathcal A]\big),
\ee
i.e.,~the sequence of $k$-th Picard iterates $\{(G_n,a^{(k),n})\}_{n\in\N}$ corresponding to $(G_n,\theta^n)$ converges in $\Q$-probability in the local weak sense to the $k$-th Picard iterate $(G,a^{(k)})$ corresponding to $(G,\theta)$. 

For $n\in\N$, given $G_n$, let $U_1^n$, $U_2^n$ be two independent random variables that are uniformly distributed on the vertex set of $G_n$ and set $\smash{C_i^{(k),n}:=C_{U_i^n}(G_n,a^{(k),n})}$ for $i=1,2$. Then, recalling that $|G_n|\to\infty$ in $\Q$-probability, by Lemma~2.8 in \cite{lacker2023local}, \eqref{eq:prob-conv-ak-marks} is equivalent to 
\be\label{eq:prob-conv-product}
\E_\Q[f_1(C_1^{(k),n})f_2(C_2^{(k),n})]\xrightarrow[n\to\infty]{}
\E_\Q[f_1(G,a^{(k)})]\ \E_\Q[f_2(G,a^{(k)})],
\quad \text{for all }f_1,f_2\in C_b(\mathcal{G}_\ast[\mathcal{A}]).
\ee
Since $\mathcal{A}$ is complete and separable, we may equivalently replace $C_b(\mathcal{G}_\ast[\mathcal{A}])$ in \eqref{eq:prob-conv-product} by the space of bounded Lipschitz functions with norm bounded by 1,
\be\label{eq:BL1}
\operatorname{BL}_1(\mathcal{G}_\ast[\mathcal{A}]):=\{f:\mathcal{G}_\ast[\mathcal{A}]\to\R \mid \|f\|_{\text{BL}}\leq 1\},
\ee
where $\|\cdot\|_{\text{BL}}$ is defined as in \eqref{eq:BL-norm} (see \cite{lacker2023local}, proof of Theorem~3.6). That is, 
\be\label{eq:prob-conv-product-Lip}
\E_\Q[f_1(C_1^{(k),n})f_2(C_2^{(k),n})]\xrightarrow[n\to\infty]{}
\E_\Q[f_1(G,a^{(k)})]\ \E_\Q[f_2(G,a^{(k)})],
\quad \text{for all }f_1,f_2\in \operatorname{BL}_1(\mathcal{G}_\ast[\mathcal{A}]).
\ee
Fix $\eps>0$ and $f_1,f_2\in \operatorname{BL}_1(\mathcal{G}_\ast[\mathcal{A}])$. Choose $k$ so large that $\rho^kM\leq \eps$. For $n\in\N$ and $i=1,2$, define in addition
$\bar C_i^n:=C_{U_i^n}(G_n,\bar a^n)$.
Then, by the definition of $d_\ast$ in \eqref{eq:d-ast-marked} and \eqref{eq:geometric-bound} applied to $G_n$, we have that
\be\label{eq:dstar-close-n}
d_\ast(\bar C_i^n,C_i^{(k),n})\leq \eps, \quad \Q\text{-a.s.~for }n\in\N,\ i=1,2.
\ee
Moreover, by \eqref{eq:geometric-bound} applied to $G$, it holds that
\be\label{eq:dstar-close-limit}
d_\ast\big((G,\bar a),(G,a^{(k)})\big)\leq \eps,\quad \Q\text{-a.s.}
\ee
Since $f_1,f_2\in \operatorname{BL}_1(\mathcal{G}_\ast[\mathcal{A}])$ are Lipschitz with respect to $d_\ast$ from \eqref{eq:d-ast-marked} with $\|f_j\|_\infty\leq 1$ and $\operatorname{Lip}(f_j)\leq 1$ for $j=1,2$, we have, using \eqref{eq:dstar-close-n}, that
\be\begin{aligned}\label{eq:term1}
&\Big|\E_\Q\big[f_1(\bar C_1^n)f_2(\bar C_2^n)\big]-\E_\Q\big[f_1(C_1^{(k),n})f_2(C_2^{(k),n})\big]\Big|\\
&\le \E_\Q\Big[\big|f_1(\bar C_1^n)-f_1(C_1^{(k),n})\big|\cdot |f_2( \bar C_2^{n})|\Big]
     +\E_\Q\Big[|f_1(C_1^{(k),n})|\cdot \big|f_2(\bar C_2^n)-f_2(C_2^{(k),n})\big|\Big]\\
&\le \E_\Q\big[d_\ast(\bar C_1^n,C_1^{(k),n})\big]+\E_\Q\big[d_\ast(\bar C_2^n,C_2^{(k),n})\big]\\
&\le 2\eps.
\end{aligned}\ee
Moreover, since $f_1,f_2\in \operatorname{BL}_1(\mathcal{G}_\ast[\mathcal{A}])$ are Lipschitz with respect to $d_\ast$ from \eqref{eq:d-ast-marked} with $\operatorname{Lip}(f_j)\leq 1$ for $j=1,2$, and by \eqref{eq:dstar-close-limit}, it follows that
\be\label{eq:marginal-close}
\Big|\E_\Q\big[f_j(G,a^{(k)})\big]-\E_\Q\big[f_j(G,\bar a)\big]\Big|
\le \E_\Q\big[d_\ast\big((G,a^{(k)}),(G,\bar a)\big)\big]\le \eps,\quad j=1,2.
\ee
Thus, by \eqref{eq:marginal-close} and since $\|f_j\|_\infty\leq 1$ for $j=1,2$,
\be\begin{aligned}\label{eq:term3}
&\Big|\E_\Q[f_1(G,a^{(k)})]\E_\Q[f_2(G,a^{(k)})]-\E_\Q[f_1(G,\bar a)]\E_\Q[f_2(G,\bar a)]\Big|\\
&\le \Big|\E_\Q[f_1(G,a^{(k)})]-\E_\Q[f_1(G,\bar a)]\Big|\cdot \Big|\E_\Q[f_2(G,a^{(k)})]\Big|\\
&\quad +\Big|\E_\Q[f_1(G,\bar a)]\Big|\cdot\Big|\E_\Q[f_2(G,a^{(k)})]-\E_\Q[f_2(G,\bar a)]\Big|\\
&\le 2\eps.
\end{aligned}\ee
Combining \eqref{eq:prob-conv-product-Lip}, \eqref{eq:term1}, and \eqref{eq:term3} with the triangle inequality yields
\[
\limsup_{n\to\infty}
\Big|\E_\Q\big[f_1(\bar C_1^n)f_2(\bar C_2^n)\big]-\E_\Q[f_1(G,\bar a)]\ \E_\Q[f_2(G,\bar a)]\Big|
\le 4\eps.
\]
Since $\eps>0$ was arbitrary, we obtain
\be\label{eq:prob-conv-product-equil}
\E_\Q\big[f_1(\bar C_1^n)f_2(\bar C_2^n)\big]\xrightarrow[n\to\infty]{}
\E_\Q[f_1(G,\bar a)]\ \E_\Q[f_2(G,\bar a)],
\quad \text{for all }f_1,f_2\in \operatorname{BL}_1(\mathcal{G}_\ast[\mathcal{A}]).
\ee
Again, by Lemma~2.8 in \cite{lacker2023local} and the equivalence between testing against $\operatorname{BL}_1(\mathcal G_\ast[\mathcal{A}])$ and against $C_b(\mathcal G_\ast[\mathcal{A}])$, \eqref{eq:prob-conv-product-equil} is equivalent to
\[
\frac{1}{|G_n|}\sum_{v \in G_n} g\big(C_v(G_n,\bar a^n)\big)\;\xrightarrow[n\to\infty]{\ \text{prob.}\ }\;\E_{\Q}\big[g(G,\bar a)\big],
\quad \text{for all } g \in C_b(\mathcal G_\ast[\mathcal{A}]),
\]
i.e.,~$\{(G_n,\bar a^n)\}_{n\in\N}$ converges in $\Q$-probability in the local weak sense to $(G,\bar a)$. This completes the proof of (ii), and hence of Theorem~\ref{thm:convergence}.
\end{proof}

We conclude this section by proving Corollary~\ref{cor:convergence}.

\begin{proof}[Proof of Corollary~\ref{cor:convergence}]
Recall Convention~\ref{convention:quenched} and suppose that the assumptions of Corollary~\ref{cor:convergence} hold.
Consider a hyperfinite unimodular random graph $(G,\theta)=(G,o,\theta)\in\mathcal G_\ast[\mathcal{A}]$ with finitary exhaustion $(H_{\xi_n})_{n\in\N}$ and marks $(\theta_v)_{v\in G}\in\mathcal{A}^G$. 

For a graph $H$ with root $o$, recall that $[[C_o(H)]]$ denotes the unrooted connected component of the root. By Corollary~\ref{cor:sample-unrooted}, the sequence $\{G_n\}_{n\in\N}$ given by $G_n:=[[C_o(H_{\xi_n})]]$ converges in distribution in the local weak sense to $G$. Now, since the marks $\theta^n=\theta_{G_n}\in\mathcal{A}^{G_n}$ are restrictions of the marks $\theta\in\mathcal{A}^G$ to $G_n$, it follows from Remark~\ref{rem:sample-marked} that  $\{(G_n,\theta^n)\}_{n\in\N}$ converges to $(G,\theta)$ in distribution in the local weak sense as well. Denote by $\bar a\in\mathcal{A}^G$ the unique Nash equilibrium of $\mathbb{G}(\mathcal{A}_{ad}^G,U,\theta,G)$ and by $\bar a^n\in\mathcal{A}^{G_n}$ the unique Nash equilibrium of $\smash{\mathbb{G}(\mathcal{A}_{ad}^{G_n},U,\theta^n,G_n)}$ for $n\in\N$.  Then, applying Theorem~\ref{thm:convergence} yields convergence of $\{(G_n,\bar a^n)\}_{n\in\N}$ to $(G,\bar a)$ in distribution in the local weak sense.
\end{proof}

\appendix
\section{Additional Results}
The following result provides sufficient conditions on a filtered probability space ensuring that the associated Hilbert space of square-integrable, progressively measurable processes is separable. Related arguments appear in various forms in the literature, however, we could not locate a statement matching our framework exactly, so we include a self-contained proof for completeness. For simplicity, we omit the tilde in the notation for elements in $\mathcal{A}$ from \eqref{eq:A} from now on.

\begin{lemma}\label{lemma:separable-A}
Let $(\Omega,\mathcal F,\F:=(\mathcal F_t)_{t\in[0,T]},\mathbb P)$ be a filtered probability space with left-continuous filtration such that $\mathcal F_t$ is separable for every $t\in[0,T]$ (in the sense of Definition~\ref{def:separable}). Then the Hilbert space of square-integrable, progressively measurable processes
\be
\mathcal A=\left\{a:\Omega\times [0,T]\to\R \, \Big| \, a \textrm{ is } \mathbb{F}\textrm{-prog.~measurable, } \,\|a\|^2_{\mathcal{A}}:=\E_{\P}\Big[\int_0^T  a(t)^2 dt  \Big]  <\infty \right\}
\ee
is separable.
\end{lemma}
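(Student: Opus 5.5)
The plan is to identify $\mathcal A$ with a closed subspace of a separable Hilbert space. The ambient space is $L^2(\Omega\times[0,T],\mathcal F_T\otimes\mathcal B([0,T]),\P\otimes dt)$. A closed subspace of a separable metric space is separable. However, $\mathcal F_T$ itself is separable by assumption, so the product $\sigma$-algebra $\mathcal F_T\otimes\mathcal B([0,T])$ is separable with respect to $\P\otimes dt$: take the countable family of finite unions of rectangles $B\times(q,r]$, with $B$ from the countable family $\mathcal C_T$ and $q,r$ rational. A monotone class argument shows that every product-measurable set is approximated in $\P\otimes dt$-measure by such unions.

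Standard approximation by simple functions then shows that $L^2$ over a separable $\sigma$-algebra is separable. The countable dense set consists of rational linear combinations of indicators of the countable family. Since progressively measurable processes are $\mathcal F_T\otimes\mathcal B([0,T])$-measurable, $\mathcal A$ embeds isometrically into this space. It is closed, being a Hilbert space under the same norm (completeness is implicit in the paper's statement that $\mathcal A$ is Hilbert). This already yields separability, so the left-continuity assumption is not even needed.

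Should one prefer a more hands-on dense family inside $\mathcal A$ itself, I would use simple predictable processes $\sum_j \xi_j\mathds 1_{(t_j,t_{j+1}]}(t)$ with rational $t_j$ and $\xi_j$ a rational combination of indicators from a countable family dense in $\mathcal F_{t_j}$. Density of simple predictable processes in $\mathcal A$ is classical (the progressive $\sigma$-algebra agrees with the predictable one up to $\P\otimes dt$-null sets). The subtle step is replacing the grid times by rationals. Here one needs $\mathcal F_{t}$ for an irrational $t$ to be approximated by $\mathcal F_{q}$ with $q\uparrow t$, i.e.\ $\mathcal F_t=\bigvee_{q<t}\mathcal F_q$, which is precisely left-continuity. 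By martingale convergence, $\E[\xi\mid\mathcal F_{q}]\to\xi$ in $L^2$.

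The main obstacle is this rational-time reduction in the constructive route; the abstract closed-subspace route avoids it entirely. I would present the closed-subspace argument as the primary proof, with the only real work being the monotone class step that establishes separability of the product $\sigma$-algebra.
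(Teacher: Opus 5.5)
Your primary argument is correct, and it takes a genuinely different route from the paper.

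\textbf{The paper's route.} The paper works inside $\mathcal A$. It truncates $a$ to $a^m$ and replaces it by dyadic step processes. Their coefficients are $\E_{\P}[\,\text{interval average of } a^m \mid \mathcal F_{q^n_j}]$, and the error is controlled through $\E_{\P}[a^m_t\mid\mathcal F_{\underline t^n}]\to a^m_t$. This is exactly where left-continuity enters, because $\underline t^n<t$. Finally, the coefficients are replaced by elements of countable dense subsets of $L^2(\Omega,\mathcal F_{q^n_j},\P)$.

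\textbf{Your route.} You embed $\mathcal A$ isometrically into $L^2(\Omega\times[0,T],\mathcal F_T\otimes\mathcal B([0,T]),\P\otimes dt)$. That space is separable via the countable family of finite unions of $B\times(q,r]$ with $B\in\mathcal C_T$. This is shorter and uses only separability of $\mathcal F_T$. That condition already forces separability of every $\mathcal F_t\subset\mathcal F_T$, since subsets of a separable metric space are separable. Your argument also shows that the left-continuity hypothesis of the lemma is superfluous. What you give up is an explicit countable dense family of adapted dyadic step processes, which is the extra information the paper's construction provides.

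\textbf{Two small corrections.}
\begin{itemize}
\item Closedness of $\mathcal A$ is not needed, again because every subset of a separable metric space is separable. You can drop the appeal to completeness.
\item Your aside that the constructive route needs left-continuity for the rational-time reduction is inaccurate. Round the grid points \emph{up} instead, choosing rationals $q_j\in[t_j,t_{j+1})$. Then $\xi_j$ is $\mathcal F_{t_j}\subset\mathcal F_{q_j}$-measurable, so adaptedness is preserved. The squared $L^2$ error is $\sum_j (q_j-t_j)\,\E_{\P}[|\xi_j-\xi_{j-1}|^2]$, which tends to $0$. So even the hands-on route works without left-continuity.
\end{itemize}
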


\begin{proof} Let $\|\cdot\|_{L^2}$ denote the norm on $L^2(\Omega\times[0,T],\mathcal F\otimes\mathcal B([0,T]),\P\otimes dt)$ that agrees with $\|\cdot\|_{\mathcal{A}}$ on $\mathcal{A}\subset L^2(\Omega\times[0,T])$. Fix $a\in\mathcal A$ and $\eps>0$. Define the bounded truncations $a^m\in\mathcal{A}$,
\be\label{eq:bounded-truncations}
a^m:=(-m)\vee a\wedge m,\quad m\in\N.
\ee
Let $\{\pi_n\}_{n\in\N}$ denote the sequence of dyadic partitions $\pi_n=(q_j^{n})_{j=0}^{2^n}$ of $[0,T]$ given by 
\be\label{eq:partition}
0=q_0^{n}<q_1^{n}<\ldots<q_{2^n}^{n}=T,\quad q_j^n:=\frac{jT}{2^n}\text{ for }j=0,1,\ldots,2^n.
\ee
For $m,n\in\N$, define the corresponding step process
\be\label{eq:a^(m,n)}
a^{m,n}_t
:=\sum_{j=0}^{2^n-1} c^{m,n}_j\,\mathds 1_{(q_j^{n},q_{j+1}^{n}]}(t),
\quad
c^{m,n}_j
:=\E_{\P}\!\bigg[\frac{1}{q_{j+1}^{n}-q_j^{n}}\int_{q_j^{n}}^{q_{j+1}^{n}} a^m_s\,ds\ \Big|\ \mathcal F_{q_j^{n}}\bigg].
\ee
Then $c^{m,n}_j$ is $\mathcal F_{q_j^{n}}$-measurable with $|c^{m,n}_j|\le m$. Since $t\in(q_j^{n},q_{j+1}^{n}]$
implies $\mathcal F_{q_j^{n}}\subset\mathcal F_t$, the process $a^{m,n}$ is adapted, so being piecewise constant in $t$, it is
progressively measurable, hence $a^{m,n}\in\mathcal A$. Moreover, define $b^{m,n}\in L^2(\Omega\times[0,T])$ by
\be\label{eq:b^(m,n)}
b^{m,n}_t
:=\sum_{j=0}^{2^n-1}\Big(\frac{1}{q_{j+1}^{n}-q_j^{n}}\int_{q_j^{n}}^{q_{j+1}^{n}} a^m_s\,ds\Big)\,
\mathds 1_{(q_j^{n},q_{j+1}^{n}]}(t),
\ee
so that $a^{m,n}_t=\E_{\P}[b^{m,n}_t\mid \mathcal F_{q_j^{n}}]$ for $t\in(q_j^{n},q_{j+1}^{n}]$.

\noindent\emph{Claim 1:} For fixed $m\in\N$, $b^{m,n}\to a^m$ in $L^2$ as $n\to\infty$.

\noindent\emph{Proof of Claim 1:}
Indeed, recalling \eqref{eq:bounded-truncations} and \eqref{eq:b^(m,n)}, for a.e.~$\omega$, the path $t\mapsto a^m_t(\omega)$ lies in $L^2([0,T])$ and $b^{m,n}(\omega)$ is the step function on $\pi_n$ averaging over its subintervals. Therefore, \eqref{eq:partition} implies
\[
\int_0^T |a^m_t(\omega)-b^{m,n}_t(\omega)|^2\,dt\to 0, \quad\text{for a.e.\ }\omega.
\]
Moreover $|a^m-b^{m,n}|\le 2m$, hence dominated convergence yields $\|a^m-b^{m,n}\|_{L^2}\to 0$.

\noindent\emph{Claim 2:} For all $m,n\in\N$,
\be\label{eq:claim2}
\|a^m-a^{m,n}\|_{\mathcal A}^2
\le
2\int_0^T \E_{\P}\Big[\,\big|a^m_t-\E_{\P}[a^m_t\mid\mathcal F_{\underline t^{n}}]\big|^2\,\Big]\,dt
\;+\;
2\|a^m-b^{m,n}\|_{L^2}^2,
\ee
where $\underline t^{n}:=\max\{q^n_j\in\pi_n:\ q^n_j<t\}$ for $t\in(0,T]$, and $\underline 0^{n}:=0$.

\noindent\emph{Proof of Claim 2:}
Fix $t\in(0,T]$ and let $j=j(t)$ be such that $t\in(q^n_j,q^n_{j+1}]$, so that $\underline t^{n}=q^n_j$.
By \eqref{eq:a^(m,n)} and \eqref{eq:b^(m,n)}, we have
$a^{m,n}_t=\E_{\P}[b^{m,n}_t\mid\mathcal F_{q^n_j}]$.
Hence
\[
a^m_t-a^{m,n}_t
=
\big(a^m_t-\E_{\P}[a^m_t\mid\mathcal F_{q^n_j}]\big)
+
\E_{\P}[a^m_t-b^{m,n}_t\mid\mathcal F_{q^n_j}].
\]
Using $(x+y)^2\le 2x^2+2y^2$ and the conditional Jensen inequality, we obtain
\begin{align*}
\E_{\P}\big[|a^m_t-a^{m,n}_t|^2\big]
&\le
2\,\E_{\P}\Big[\,\big|a^m_t-\E_{\P}[a^m_t\mid\mathcal F_{q^n_j}]\big|^2\,\Big]
+
2\,\E_{\P}\Big[\,\big|\E_{\P}[a^m_t-b^{m,n}_t\mid\mathcal F_{q^n_j}]\big|^2\,\Big]\\
&\le
2\,\E_{\P}\Big[\,\big|a^m_t-\E_{\P}[a^m_t\mid\mathcal F_{q^n_j}]\big|^2\,\Big]
+
2\,\E_{\P}\big[|a^m_t-b^{m,n}_t|^2\big].
\end{align*}
Integrating over $t\in[0,T]$ and noting that $q^n_j=\underline t^{n}$ on $(q^n_j,q^n_{j+1}]$ yields \eqref{eq:claim2}.

\noindent\emph{Claim 3:} Fix $m\in\N$. Then
\be\label{eq:claim3}
\int_0^T \E_{\P}\Big[\,\big|a^m_t-\E_{\P}[a^m_t\mid\mathcal F_{\underline t^{n}}]\big|^2\,\Big]\,dt
\;\xrightarrow[n\to\infty]{}\;0.
\ee

\noindent\emph{Proof of Claim 3:}
Fix $t\in(0,T]$. Since $(\pi_n)_n$ is dyadic, the set $\{\underline t^{n}:n\in\N\}$ is a subset of $[0,t)$ with $\underline t^{n}\uparrow t$.
Hence, by monotonicity of the filtration,
\[
\bigvee_{n\in\N}\mathcal F_{\underline t^{n}}
=
\sigma\Big(\bigcup_{n\in\N}\mathcal F_{\underline t^{n}}\Big)
=
\sigma\Big(\bigcup_{s<t}\mathcal F_s\Big)
=\mathcal F_{t-}.
\]
By left-continuity, $\mathcal F_{t-}=\mathcal F_t$.
Since $a^m$ is progressively measurable, $a^m_t$ is $\mathcal F_t$-measurable and $|a^m_t|\le m$, hence $a^m_t\in L^2(\Omega,\mathcal{F},\P)$.
By Corollary 2.4 in Chapter II of \cite{revuz1999continuous},
\[
\E_{\P}[a^m_t\mid\mathcal F_{\underline t^{n}}]\xrightarrow[n\to\infty]{a.s.} \E_{\P}[a^m_t\mid\mathcal F_t]=a^m_t.
\]
Moreover, since $|a^m_t|\le m$ implies $|\E_{\P}[a^m_t\mid\mathcal F_{\underline t^{n}}]|\le m$ for all $n\in\N$, we have
\be\label{eq:dom-conv-bound}
\big|a^m_t-\E_{\P}[a^m_t\mid\mathcal F_{\underline t^{n}}]\big|^2\le 4m^2,
\quad t\in[0,T],\ n\in\N.
\ee
so that dominated convergence on $\Omega$ implies
$$
\E_{\P}\big[|a^m_t-\E_{\P}[a^m_t\mid\mathcal F_{\underline t^{n}}]|^2\big]\xrightarrow[n\to\infty]{}0,\quad t\in(0,T].
$$
Moreover, by \eqref{eq:dom-conv-bound} we have
\[
\E_{\P}\big[\,\big|a^m_t-\E_{\P}[a^m_t\mid\mathcal F_{\underline t^{n}}]\big|^2\,\big]\le 4m^2,
\quad t\in[0,T],\ n\in\N.
\]
Hence dominated convergence on $[0,T]$ implies \eqref{eq:claim3}.

By Claims 1--3, for each fixed $m$ we have $\|a^m-a^{m,n}\|_{\mathcal A}\to 0$ as $n\to\infty$.
Choose $m$ so large that $\|a-a^m\|_{\mathcal A}<\eps/4$, and then choose $n$ so large that
$\|a^m-a^{m,n}\|_{\mathcal A}<\eps/4$. Then, by the triangle inequality,
\be\label{eq:eps/2}
\|a-a^{m,n}\|_{\mathcal A}
\le \|a-a^m\|_{\mathcal A}+\|a^m-a^{m,n}\|_{\mathcal A}
<\frac{\eps}{2}.
\ee
Next, we need to approximate the random coefficients of the step processes in \eqref{eq:a^(m,n)}. For each $n\in\N$ and $j=0,1,\dots,2^n-1$, separability of $\mathcal F_{q_j^n}$
implies that $L^2(\Omega,\mathcal F_{q_j^n},\P)$ is separable, hence we may fix a countable dense subset
\[
D_{j}^{n}\subset L^2(\Omega,\mathcal F_{q_j^n},\P).
\]
Let $\mathcal S\subset\mathcal A$ be the set of all dyadic step processes of the form
\[
s_t=\sum_{j=0}^{2^n-1}\zeta_j\,\mathds 1_{(q_j^n,q_{j+1}^n]}(t),
\quad \zeta_j\in D_{j}^{n},\ n\in\N.
\]
Since for each fixed $n$ the set $\prod_{j=0}^{2^n-1} D_j^n$ is a finite product of countable sets (hence countable), and $\mathcal S$ is a countable union over $n\in\N$ of such sets, it follows that $\mathcal S$ is countable. Finally, to show that $\mathcal S$ is dense in $\mathcal{A}$, write $a^{m,n}$ as in \eqref{eq:a^(m,n)}:
\be\label{eq:a^(m,n)-def}
a^{m,n}_t=\sum_{j=0}^{2^n-1}c^{m,n}_j\,\mathds 1_{(q_j^{n},q_{j+1}^{n}]}(t),
\quad c^{m,n}_j\in L^2(\Omega,\mathcal F_{q_j^{n}},\P).
\ee
For each $j$, choose $\zeta_j\in D_j^n$ such that
\be\label{eq:eps-T-N}
\E_{\P}\big[|c^{m,n}_j-\zeta_j|^2\big]<\frac{\eps^2}{4T 2^n}.
\ee
Define $s\in\mathcal S$ by
\be\label{eq:s}
s_t:=\sum_{j=0}^{2^n-1}\zeta_j\,\mathds 1_{(q_j^{n},q_{j+1}^{n}]}(t).
\ee
Then by \eqref{eq:a^(m,n)-def}, \eqref{eq:eps-T-N}, and \eqref{eq:s},
\be\begin{aligned}
\|a^{m,n}-s\|_{\mathcal A}^2
&=\sum_{j=0}^{2^n-1}(q_{j+1}^{n}-q_j^{n})\,\E_{\P}\big[|c^{m,n}_j-\zeta_j|^2\big]\\
&\le \sum_{j=0}^{2^n-1}T\cdot \frac{\eps^2}{4T 2^n}=\frac{\eps^2}{4},
\end{aligned}\ee
so $\|a^{m,n}-s\|_{\mathcal A}<\eps/2$. Combining this with \eqref{eq:eps/2} and applying the triangle inequality yields
\[
\|a-s\|_{\mathcal A}\le \|a-a^{m,n}\|_{\mathcal A}+\|a^{m,n}-s\|_{\mathcal A}<\eps.
\]
Therefore, $\mathcal S$ is a countable dense subset of $\mathcal A$, so $\mathcal A$ is separable.
\end{proof}


\end{document}